\documentclass[10pt]{article} 
\usepackage[letterpaper, margin=1in]{geometry} 
\usepackage[english]{babel}
\usepackage[normalem]{ulem}
\usepackage{amssymb,amsmath,amsthm,mathtools,braket,aligned-overset}
\usepackage{thmtools,thm-restate}
\usepackage{graphicx,xcolor}
\usepackage{float}
\usepackage{caption,subcaption}
\usepackage{tikz}
\usetikzlibrary{calc,decorations.pathmorphing,decorations.text,decorations.markings,matrix}

\usepackage{array,booktabs}
\usepackage{enumerate}

\usepackage{appendix}
\usepackage{titlesec}
\usepackage{authblk}

\usepackage[hidelinks]{hyperref}
\usepackage[capitalise,nameinlink,noabbrev]{cleveref}
\usepackage[noadjust]{cite}

\theoremstyle{plain}
\newtheorem{thm}{Thm}[section]

\newtheorem{claim}{Claim}
\numberwithin{claim}{section}
\newtheorem{theorem}[thm]{Theorem}

\newtheorem{proposition}[thm]{Proposition}

\newtheorem{problem}{Problem}

\newcommand{\e}{\varepsilon}
\newcommand{\D}{\mathcal{D}}

\allowdisplaybreaks[4]

\begin{document}

\title{Bounded chromatic number of graphs with small clique number and large minimum degree}

 \date{}
\author[ ]{Jiaao Li}
\author[ ]{Xinyuan Li}

\affil[ ]{\small School of Mathematical Sciences and LPMC, Nankai University, Tianjin 300071, China}
\affil[ ]{\small Emails: lijiaao@nankai.edu.cn; xinyuanli@mail.nankai.edu.cn}

\maketitle

\begin{abstract}
We prove that every triangle-free graph with minimum degree at least $\frac{n}{3}$ is $4$-colorable and thereby settle a problem of Brandt and Thomass\'e (2005) at the threshold $\frac{n}{3}$. The number four is best possible. For a positive integer-valued function $f(n)=o(n)$, we relate the chromatic number of $f(n)$-vertex subgraphs of the Kneser graph $KG(n,f(n))$ to that of triangle-free graphs with minimum degree at least $\frac{n}{3}-f(n)$. Consequently, for every $0<\delta<1$ and $\e>0$, and for all sufficiently large $n$, every $n$-vertex triangle-free graph with minimum degree at least $\frac{n}{3}-n^{1-\delta}$ has chromatic number at most $10^{391}+1+\left\lceil{(1+\e)(1-\delta)}/{\delta}\right\rceil$.

We also show that every sufficiently large $n$-vertex maximal triangle-free graph with minimum degree at least $\frac{n}{3}-f(n)$ and chromatic number at least $10^{391}$ contains a bipartite subgraph with parts of orders $\frac{n}{3}-O(f(n))$ and $\frac{2n}{3}-O(f(n))$; the remaining induced subgraph admits a homomorphism to $KG(\frac{n}{3}-O(f(n)),O(f(n)))$. Finally, we connect maximal $K_r$-free graphs with minimum degree at least $\frac{2r-5}{2r-3}n-f(n)$ to $K_{r-1}$-free graphs and extend these results to $K_r$-free graphs.  Our proofs employ the recent strong Brandt--Thomass\'{e}  theorem of \L uczak, Polcyn, and Reiher.
\end{abstract}
\noindent \textbf{Keywords:}
chromatic number, minimum degree, triangle-free graph

\section{Introduction}\label{sec:introduction}
The problem of bounding the chromatic number of a graph with small clique number and large minimum degree has received much attention since it was proposed by Erd\H{o}s and Simonovits \cite{1973-erdos}. In 1974, Andr\'{a}sfai, Erd\H{o}s, and S\'{o}s \cite{1974-andrasfai} proved that every $n$-vertex $K_r$-free graph with minimum degree greater than $\frac{3r-7}{3r-4}n$ is $(r-1)$-colorable. Later, Jin \cite{1993-jin} proved that every triangle-free graph with minimum degree greater than $\frac{10}{29}n$ is homomorphic to an Andr\'{a}sfai graph. The Andr\'{a}sfai graph $\Gamma_i$ is the Cayley graph with vertex set $\{0,\dots,3i-2\}$ in which two vertices are adjacent if and only if their difference modulo $3i-1$ belongs to $\{i,\dots,2i-1\}$ (see Figures~\ref{fig:andrasfai graph1} and~\ref{fig:andrasfai graph2} for two drawings of $\Gamma_3$). Jin \cite{1995-jin} also determined a minimum-degree condition guaranteeing that a triangle-free graph is $3$-colorable. Brandt and Pisanski \cite{1998-brandt} then found another infinite family of $4$-chromatic triangle-free graphs with minimum degree greater than $\frac{n}{3}$, namely the blow-ups of Vega graphs. As shown in Figure~\ref{fig:vega graph}, a Vega graph $\Upsilon_{i}^{00}$ has three parts. The first induces an Andr\'{a}sfai graph $\Gamma_i$ with $V(\Gamma_i)=\{0,\dots,3i-2\}$. The second induces the $6$-cycle $awbucva$. The third is an edge $xy$. The vertex $x$ is adjacent to $a,b,c$, and $y$ is adjacent to $u,v,w$. The vertices $a$ and $u$ are adjacent to $\{0,\dots,i-1\}$, the vertices $b$ and $v$ to $\{i,\dots,2i-1\}$, and the vertices $c$ and $w$ to $\{2i,\dots,3i-2\}$. The other three Vega graphs are $\Upsilon_{i}^{10}=\Upsilon_{i}^{00}-\{y\}$, $\Upsilon_{i}^{01}=\Upsilon_{i}^{00}-\{2i-1\}$, and $\Upsilon_{i}^{11}=\Upsilon_{i}^{00}-\{y,2i-1\}$. In particular, the Gr\"otzsch graph $\Upsilon:=\Upsilon_{2}^{11}$ is shown in Figure~\ref{fig:grotzsch graph}.
In 2002, Thomassen \cite{2002-thomassen} showed that every triangle-free graph with minimum degree at least $(\frac{1}{3}+\varepsilon)n$ has bounded chromatic number for each $\varepsilon>0$, thereby settling the Erd\H{o}s--Simonovits problem.
Later, Brandt and Thomass\'{e} \cite{2010-brandt} proved that every maximal triangle-free graph with minimum degree strictly greater than $\frac{n}{3}$ is a blow-up of a Vega graph and hence is $4$-colorable.
\begin{figure}
    \centering
\begin{subfigure}[b]{0.45\textwidth}
        \centering
        \begin{tikzpicture}[scale=0.85]
        \node [circle, fill=black, inner sep=0pt, minimum size=4pt] (v1) at (-0.5,1.6) {};
\node [circle, fill=black, inner sep=0pt, minimum size=4pt] (v8) at (-2,1) {};
\node [circle, fill=black, inner sep=0pt, minimum size=4pt] (v6) at (-2.6,-0.5) {};
\node [circle, fill=black, inner sep=0pt, minimum size=4pt] (v2) at (-2,-2) {};
\node [circle, fill=black, inner sep=0pt, minimum size=4pt] (v3) at (-0.5,-2.6) {};
\node [circle, fill=black, inner sep=0pt, minimum size=4pt] (v4) at (1,-2) {};
\node [circle, fill=black, inner sep=0pt, minimum size=4pt] (v7) at (1.6,-0.5) {};
\node [circle, fill=black, inner sep=0pt, minimum size=4pt] (v5) at (1,1) {};
\draw  (v1) edge (v2);
\draw  (v1) edge (v3);
\draw  (v1) edge (v4);
\draw  (v5) edge (v6);
\draw  (v5) edge (v2);
\draw  (v5) edge (v3);
\draw  (v7) edge (v8);
\draw  (v7) edge (v6);
\draw  (v7) edge (v2);
\draw  (v8) edge (v4);
\draw  (v8) edge (v3);
\draw  (v6) edge (v4);
\node[anchor=south,yshift=3pt,inner sep=0pt] at (v1) {1};
\node[anchor=south east,xshift=-2pt,yshift=2pt,inner sep=0pt] at (v8) {2};
\node[anchor=east,xshift=-3pt,inner sep=0pt] at (v6) {3};
\node[anchor=north east,xshift=-2pt,yshift=-2pt,inner sep=0pt] at (v2) {4};
\node[anchor=north,yshift=-3pt,inner sep=0pt] at (v3) {5};
\node[anchor=north west,xshift=2pt,yshift=-2pt,inner sep=0pt] at (v4) {6};
\node[anchor=west,xshift=3pt,inner sep=0pt] at (v7) {7};
\node[anchor=south west,xshift=2pt,yshift=2pt,inner sep=0pt] at (v5) {8};
        \end{tikzpicture}
\caption{A drawing of the Andr\'asfai graph $\Gamma_3$}
        \label{fig:andrasfai graph1}
    \end{subfigure}
\hfill
    \begin{subfigure}[b]{0.45\textwidth}
        \centering
        \begin{tikzpicture}[scale=0.85]
        \node [circle, fill=black, inner sep=0pt, minimum size=4pt] (v1) at (-0.5,1.6) {};
\node [circle, fill=black, inner sep=0pt, minimum size=4pt] (v8) at (-2,1) {};
\node [circle, fill=black, inner sep=0pt, minimum size=4pt] (v6) at (-2.6,-0.5) {};
\node [circle, fill=black, inner sep=0pt, minimum size=4pt] (v2) at (-2,-2) {};
\node [circle, fill=black, inner sep=0pt, minimum size=4pt] (v3) at (-0.5,-2.6) {};
\node [circle, fill=black, inner sep=0pt, minimum size=4pt] (v4) at (1,-2) {};
\node [circle, fill=black, inner sep=0pt, minimum size=4pt] (v7) at (1.6,-0.5) {};
\node [circle, fill=black, inner sep=0pt, minimum size=4pt] (v5) at (1,1) {};

\draw  (v1) edge (v8);
\draw  (v8) edge (v6);
\draw  (v6) edge (v2);
\draw  (v2) edge (v3);
\draw  (v3) edge (v4);
\draw  (v4) edge (v7);
\draw  (v7) edge (v5);
\draw  (v5) edge (v1);
\draw  (v1) edge (v3);
\draw  (v6) edge (v7);
\draw  (v5) edge (v2);
\draw  (v8) edge (v4);
\node[anchor=south,yshift=3pt,inner sep=0pt] at (v1) {1};
\node[anchor=south east,xshift=-2pt,yshift=2pt,inner sep=0pt] at (v8) {2};
\node[anchor=east,xshift=-3pt,inner sep=0pt] at (v6) {3};
\node[anchor=north east,xshift=-2pt,yshift=-2pt,inner sep=0pt] at (v2) {4};
\node[anchor=north,yshift=-3pt,inner sep=0pt] at (v3) {5};
\node[anchor=north west,xshift=2pt,yshift=-2pt,inner sep=0pt] at (v4) {6};
\node[anchor=west,xshift=3pt,inner sep=0pt] at (v7) {7};
\node[anchor=south west,xshift=2pt,yshift=2pt,inner sep=0pt] at (v5) {8};
        \end{tikzpicture}
        \caption{An alternative drawing of $\Gamma_3$}
        \label{fig:andrasfai graph2}
    \end{subfigure}
    \hfill 
    
    \caption{Two drawings of the Andr\'asfai graph $\Gamma_3$}
    \label{fig:andrasfai-drawings}
\end{figure}

\begin{figure}
    \centering
    \begin{subfigure}[b]{0.45\textwidth}
        \centering
        \begin{tikzpicture}[scale=0.5]
        \node [circle, fill=red, inner sep=0pt, minimum size=4pt] (v1) at (-0.5,1.5) {};
\node [circle, fill=blue, inner sep=0pt, minimum size=4pt] (v5) at (-2,1) {};
\node [circle, fill=blue, inner sep=0pt, minimum size=4pt] (v7) at (-2.5,-0.5) {};
\node [circle, fill=green, inner sep=0pt, minimum size=4pt] (v2) at (-2,-2) {};
\node [circle, fill=green, inner sep=0pt, minimum size=4pt] (v3) at (-0.5,-2.5) {};
\node [circle, fill=green, inner sep=0pt, minimum size=4pt] (v4) at (1,-2) {};
\node [circle, fill=red, inner sep=0pt, minimum size=4pt] (v6) at (1.5,-0.5) {};
\node [circle, fill=red, inner sep=0pt, minimum size=4pt] (v8) at (1,1) {};
\draw  (v1) edge (v2);
\draw  (v1) edge (v3);
\draw  (v1) edge (v4);
\draw  (v5) edge (v6);
\draw  (v5) edge (v4);
\draw  (v5) edge (v3);
\draw  (v7) edge (v8);
\draw  (v7) edge (v6);
\draw  (v7) edge (v4);
\draw  (v2) edge (v8);
\draw  (v2) edge (v6);
\draw  (v3) edge (v8);

\node [circle, fill=red, inner sep=0pt, minimum size=4pt] (v16) at (-0.5,3.5) {};
\node [circle, fill=blue, inner sep=0pt, minimum size=4pt] (v15) at (-4,1) {};
\node [circle, fill=green, inner sep=0pt, minimum size=4pt] (v9) at (3,1) {};
\node [circle, fill=blue, inner sep=0pt, minimum size=4pt] (v11) at (3,-2) {};
\node [circle, fill=green, inner sep=0pt, minimum size=4pt] (v14) at (-4,-2) {};
\node [circle, fill=red, inner sep=0pt, minimum size=4pt] (v13) at (-0.5,-4.5) {};
\node [circle, fill=black, inner sep=0pt, minimum size=4pt] (v10) at (4.5,3) {};
\node [circle, fill=black, inner sep=0pt, minimum size=4pt] (v12) at (4.5,-4) {};

\draw  (v10) edge (v11);
\draw  (v9) edge (v12);
\draw  (v11) edge (v13);
\draw  (v13) edge (v14);
\draw  (v14) edge (v15);
\draw  (v15) edge (v16);
\draw  (v16) edge (v9);
\draw  (v9) edge (v11);
\draw  (v10) edge (v12);
\draw  (v16) edge (v10);
\draw  (v12) edge (v13);
\draw  plot[smooth, tension=.7] coordinates {(v10) (-1,4) (-5,1.5) (v14)};

\draw  plot[smooth, tension=.7] coordinates {(v12) (-1,-5) (-5,-2.5) (v15)};

\node at (4.5,3.5) {$x$};
\node at (4.5,-4.5) {$y$};
\node[red] at (-0.5,-4) {$u$};
\node [blue] at (2.5,-2) {$c$};
\node[green] at (2.5,1) {$v$};
\node[red] at (-0.5,3) {$a$};
\node [blue]at (-3.5,1) {$w$};
\node [green]at (-3.5,-2) {$b$};
\node [blue]at (-2.1,1.5) {$3i-2$};
\node [red]at (-0.5,2) {$0$};
\node [red]at (2,-0.9) {$i-1$};
\node [green]at (1.5,-2) {$i$};
\node [green]at (-2.1,-2.4) {$2i-1$};
\node [blue]at (-3,-0.5) {$2i$};
\draw [dashed,blue] plot[smooth, tension=.7] coordinates {(-2.5,1) (-3,0.5) (-3,0)};
\draw [dashed,red] plot[smooth, tension=.7] coordinates {(0,2) (1,1.5) (1.5,1) (2,0)};
\draw [dashed,green] plot[smooth, tension=.7] coordinates {(1,-2.5) (-0.5,-3) (-2,-2.5)};
        \end{tikzpicture}
        \caption{The Vega graph $\Upsilon_{i}^{00}$}
        \label{fig:vega graph}
    \end{subfigure}
\hfill 
    \begin{subfigure}[b]{0.45\textwidth}
        \centering
        \begin{tikzpicture}[scale=0.7]
        \node [circle, fill=black, inner sep=0pt, minimum size=4pt] (v1) at (-0.5,2.5) {};
\node [circle, fill=black, inner sep=0pt, minimum size=4pt] (v2) at (-1.5,1) {};
\node [circle, fill=black, inner sep=0pt, minimum size=4pt] (v4) at (0.5,1) {};
\node [circle, fill=black, inner sep=0pt, minimum size=4pt] (v3) at (-0.5,-0.5) {};
\node [circle, fill=black, inner sep=0pt, minimum size=4pt] (v7) at (-0.5,-2) {};
\node [circle, fill=black, inner sep=0pt, minimum size=4pt] (v9) at (-2,-1) {};
\node [circle, fill=black, inner sep=0pt, minimum size=4pt] (v6) at (1,-1) {};
\node [circle, fill=black, inner sep=0pt, minimum size=4pt] (v5) at (2,0.5) {};
\node [circle, fill=black, inner sep=0pt, minimum size=4pt] (v11) at (-3,0.5) {};

\node [circle, fill=black, inner sep=0pt, minimum size=4pt] (v10) at (-2,-2.5) {};
\node [circle, fill=black, inner sep=0pt, minimum size=4pt] (v8) at (1,-2.5) {};
\draw  (v1) edge (v2);
\draw  (v2) edge (v3);
\draw  (v3) edge (v4);
\draw  (v4) edge (v1);
\draw  (v4) edge (v5);
\draw  (v5) edge (v6);
\draw  (v6) edge (v3);
\draw  (v3) edge (v7);
\draw  (v7) edge (v8);
\draw  (v8) edge (v6);
\draw  (v3) edge (v9);
\draw  (v9) edge (v10);
\draw  (v9) edge (v11);
\draw  (v11) edge (v2);
\draw  (v10) edge (v7);
\draw  plot[smooth, tension=.7] coordinates {(v1) (-3,1.5) (-4,-0.5) (v10)};
\draw  plot[smooth, tension=.7] coordinates {(v1) (2,1.5) (3,-0.5) (v8)};

\draw  plot[smooth, tension=.7] coordinates {(v11) (-2,3) (1,3) (v5)};

\draw  plot[smooth, tension=.7] coordinates {(v11) (-3.5,-1.5) (-2,-3.5) (v8)};

\draw  plot[smooth, tension=.7] coordinates {(v5) (2.5,-1.5) (1,-3.5) (v10)};
        \end{tikzpicture}
        \caption{The Gr\"otzsch graph $\Upsilon:=\Upsilon_{2}^{11}$}
        \label{fig:grotzsch graph}
    \end{subfigure}
    \caption{Two Vega graphs}
    \label{fig:vega-family}
\end{figure}
Many other related results are known; see \cite{2013-allen, 2011-lyle, 2006-luczak, 2010-luczak, 2020-o, 2002-thomassen}.
After resolving the case above $n/3$, Brandt and Thomass\'e \cite{2010-brandt} posed the following problems concerning minimum degree at most $\frac{1}{3}n$:
\begin{problem}[\cite{2010-brandt}]\label{problem1}
Determine the minimum $k$ such that every $n$-vertex triangle-free graph with minimum degree $\frac{n}{3}$ is $k$-colorable.
\end{problem}

\begin{problem}[\cite{2010-brandt}]\label{problem2}
For every $t\geq5$, determine the asymptotic behavior of the extremal deficiency
\[
s_t(n):=\frac{n}{3}-\max\{\delta(F): |V(F)|=n,\ F\text{ is triangle-free},\ \chi(F)\geq t\}.
\]
\end{problem}
\stepcounter{thm}
\L uczak and Thomass\'e \cite{2010-luczak} obtained the bound $1665$ for Problem~\ref{problem1} via VC-dimension. They also went below the $\frac{n}{3}$ threshold, proving that a $K_3$-free graph with minimum degree at least $\frac{1}{3}n-a$ has chromatic number at most $\max\{10^5,12(a+1)\}$. However, this bound goes to infinity as $a$ goes to infinity.
Recently, \L uczak, Polcyn, and Reiher \cite{2024-luczak} strengthened the theorem of Brandt and Thomass\'e. They introduced the $\mathcal{D}_{i}$ property: a graph has property $\mathcal{D}_{i}$ if, for each $k\in[i]$ and every $3k$-element multiset $H$ of $V(G)$, there exists $y\in V(G)$ such that $|N(y)\cap H|\geq k+1$, with multiplicity counted. Clearly, if $G$ satisfies $\delta(G)>\frac{n}{3}$, then $G$ has property $\mathcal{D}_{i}$ for every $i$. Their applicable $\mathcal D_3$ and $\mathcal D_4$ alternatives imply that a maximal triangle-free graph is a blow-up of an Andr\'asfai graph or a Vega graph, as stated precisely below.
\begin{theorem}[\cite{2024-luczak}]\label{thm-D3 and D4 implies blow-up of Vega graph}
    The following statements are true for a maximal triangle-free graph $G$:
    \begin{enumerate}[(1)]
        \item If $G$ does not contain $\Upsilon$ and satisfies $\mathcal{D}_{3}$, then $G$ is a blow-up of an Andr\'{a}sfai graph $\Gamma_{i}$ for some $i\geq1$.
        \item If $G$ contains $\Upsilon$ and satisfies $\mathcal{D}_{4}$, then $G$ is a blow-up of a Vega graph.
    \end{enumerate}
\end{theorem}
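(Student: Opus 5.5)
The theorem is quoted from Łuczak, Polcyn and Reiher, so within this paper it is an imported tool rather than something to be reproved. Still, here is how I would approach it. The strategy follows the Brandt--Thomassé proof, with their density hypothesis $\delta(G)>\frac n3$ replaced by the combinatorial properties $\mathcal D_3$ and $\mathcal D_4$. The basic tool is that in a maximal triangle-free graph any two non-adjacent vertices have a common neighbour. Consequently, $G$ is a blow-up of its twin-free quotient $G'$, obtained by identifying vertices with equal neighbourhoods. It therefore suffices to show that $G'$ is isomorphic to an Andrásfai graph or to a Vega graph.

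I would also check that $G'$ is again maximal triangle-free and inherits the relevant $\mathcal D_i$ property. This holds because a multiset in $G'$ lifts to a multiset in $G$ with the same multiplicities, and adjacency is preserved under the quotient map.

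For part (1), assume $G'$ is $\Upsilon$-free and satisfies $\mathcal D_3$. If $G'$ is bipartite, maximality forces $G'=K_2=\Gamma_1$. Otherwise I would take a shortest odd cycle. Applying $\mathcal D_1$ and $\mathcal D_2$ to suitable $3$- and $6$-element multisets drawn from the cycle rules out a shortest odd cycle of length $\ge 7$, because such a cycle would produce a multiset with no vertex hitting enough of it. So there is a $5$-cycle.

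Next I would grow a maximal induced Andrásfai subgraph $\Gamma_i$ inside $G'$, with its vertices arranged in the circular order $0,\dots,3i-2$. Every other vertex $z$ is then classified by its neighbourhood on this $\Gamma_i$. Triangle-freeness together with maximality should force $N(z)\cap\Gamma_i$ to be a circular interval of the form $\{j,\dots,j+i-1\}$ or one of a few exceptional patterns. An interval pattern makes $z$ a twin of an existing vertex, so $z$ is absorbed. The exceptional patterns either extend $\Gamma_i$ to $\Gamma_{i+1}$, contradicting maximality of $i$, or create an induced Grötzsch graph, contradicting $\Upsilon$-freeness. The remaining patterns violate $\mathcal D_3$ for a $9$-element multiset that takes three suitable "antipodal" triples of $\Gamma_i$ with appropriate multiplicities.

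For part (2), I start from an induced copy of $\Upsilon$ and choose a maximal induced Vega subgraph $\Upsilon_i^{\ast\ast}$, which contains $\Upsilon$. I then carry out the same classification of outside vertices by their traces on the Andrásfai part, the $6$-cycle $awbucv$, and the edge $xy$. The goal is to show that every outside vertex is either a twin of a vertex of the Vega subgraph or yields a larger Vega graph. The new feature here is that the Vega graphs have weight-deficient configurations, namely $12$-element multisets supported on the $6$-cycle together with $x$ and $y$, on which a typical vertex sees only $4$ points. So $\mathcal D_4$ is exactly the hypothesis needed to exclude the bad attachments.

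I expect the main obstacle to be this exhaustive case analysis of neighbourhood traces in part (2). The difficulty is finding, for each forbidden trace, an explicit multiset of size $3k$ with $k\le 4$ that violates $\mathcal D_k$. My first attempt for each case would be the multiset consisting of the trace's complementary intervals, weighted so that every vertex of the current Vega graph meets it in exactly $k$ points, and then to verify that the new vertex also meets it in at most $k$ points.
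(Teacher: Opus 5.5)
The paper gives no proof of this statement; it is quoted from \L uczak, Polcyn and Reiher and used as a black box. Judged on its own, your outline (twin-free quotient, a maximal induced Andr\'asfai or Vega subgraph $F$, classification of traces $N(z)\cap V(F)$, contradictions from $\mathcal{D}_k$) is a reasonable skeleton. However, both load-bearing steps fail as described.

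First, the proposed certificates cannot violate $\mathcal{D}_k$. A multiset $M$ of $3k$ vertices supported on an induced $\Gamma_i$ never does so. Since $\Gamma_i$ is $i$-regular on $3i-1$ vertices, $\sum_{v\in V(\Gamma_i)}|N(v)\cap M|=3ki>k(3i-1)$, so some vertex of $\Gamma_i$ sees at least $k+1$ points of $M$. Hence no choice of ``antipodal triples'' of $\Gamma_i$ violates $\mathcal{D}_3$, and there is no weighting of the complementary intervals under which every vertex of the current subgraph sees exactly $k$ points. The Vega case fails in the same way. In a Vega graph containing $x$ and $y$, the sets $N(x)=\{a,b,c,y\}$ and $N(y)=\{u,v,w,x\}$ partition $\{a,b,c,u,v,w,x,y\}$. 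So for every $12$-element multiset on these eight vertices, one of $x,y$ sees at least $6$ points, and the ``weight-deficient configurations'' you invoke do not exist. More generally, Vega graphs have blow-ups with minimum degree above a third of the order, and $\mathcal{D}_k$ passes from a blow-up to the underlying graph. Consequently, any violating multiset must contain the offending vertex $z$ or other vertices outside $F$. You must then bound $|N(y)\cap M|$ for \emph{every} $y\in V(G)$, including the still unclassified neighbours of $z$ outside $F$; checking the vertices of $F$ and $z$ itself is not enough.

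Second, the classification of traces is not local. Triangle-freeness only says that $N(z)\cap V(\Gamma_i)$ is independent, i.e.\ contained in some arc $\{j,\dots,j+i-1\}$; these arcs are exactly the neighbourhoods in $\Gamma_i$. Maximality of $G'$ produces common neighbours that may lie outside $\Gamma_i$, so it does not constrain the trace at all. Proper sub-arcs, including $\emptyset$, genuinely occur. The Petersen graph is twin-free, maximal triangle-free, has only $10$ vertices (so it is $\Upsilon$-free), and contains no induced $\Gamma_3$. Every vertex off an induced $5$-cycle has exactly one neighbour on it. It violates $\mathcal{D}_3$ through any nine distinct vertices, a multiset not supported on the $5$-cycle. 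In Vega graphs, $x$ and $y$ have empty trace on the Andr\'asfai part. So there are not ``a few exceptional patterns.'' Excluding every proper sub-arc by means of $\mathcal{D}_3$ (resp.\ $\mathcal{D}_4$), with multisets containing outside vertices and a bound valid for all of $G$, is precisely the substance of the cited theorem, and your sketch does not supply it. Likewise, a full-arc trace does not make $z$ a twin of a vertex of $\Gamma_i$ in $G'$. That follows only once all vertices are known to have full-arc traces, after checking that adjacency between outside vertices is then determined by their traces. As a minor point, $\mathcal{D}_1$ holds in every maximal triangle-free graph, so it excludes nothing. The $5$-cycle comes for free, since $G'$ has diameter $2$ and a shortest odd cycle is isometric.
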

Combining this theorem with further analysis, we prove that every triangle-free graph with minimum degree at least $\frac{n}{3}$ is $4$-colorable, thereby solving Problem~\ref{problem1} with $k=4$. Furthermore, using a result of Brandt \cite{2001-brandt} (see also \cite{2011-lyle}), we prove the following theorem for $K_r$-free graphs with $r\geq3$.
\begin{theorem}\label{thm-main thm}
    Every $n$-vertex $K_r$-free graph with minimum degree at least $\frac{2r-5}{2r-3}n$ is $(r+1)$-colorable.
\end{theorem}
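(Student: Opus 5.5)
Theorem~\ref{thm-main thm} will be proved by induction on $r$. The base case $r=3$ is the statement that every triangle-free graph with $\delta(G)\ge \frac{n}{3}$ is $4$-colorable; this is the main new content. For $r\ge4$ we reduce from $K_r$-free graphs to $K_{r-1}$-free graphs using the result of Brandt \cite{2001-brandt}. Adding edges can only increase the chromatic number while preserving $K_r$-freeness and the degree condition, so throughout we may assume $G$ is maximal $K_r$-free.

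\emph{Base case $r=3$.} Let $G$ be maximal triangle-free with $\delta(G)\ge n/3$. The first step is to check when $G$ fails $\mathcal D_3$ or $\mathcal D_4$. Take a $3k$-multiset $H$ with $k\le 4$ such that every $y$ has $|N(y)\cap H|\le k$. Summing degrees over $H$ with multiplicity gives
\[
3k\cdot \frac n3\le \sum_{h\in H}d(h)=\sum_y |N(y)\cap H|\le kn .
\]
Hence equality holds throughout: every vertex of $H$ has degree exactly $n/3$, and every $y$ has exactly $k$ neighbours in $H$. This is a very rigid structure. If $G$ satisfies $\mathcal D_3$ (respectively $\mathcal D_4$ when $\Upsilon\subseteq G$), Theorem~\ref{thm-D3 and D4 implies blow-up of Vega graph} makes $G$ a blow-up of an Andr\'asfai graph, which is $3$-colorable, or of a Vega graph, which is $4$-colorable, and we are done.

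Otherwise the equality structure must be exploited, and this is the main obstacle. For $k=1$, $H=\{h_1,h_2,h_3\}$ has neighbourhoods partitioning $V(G)$. Each $N(h_i)$ is independent by triangle-freeness, so $G$ is $3$-colorable. For $k=2,3,4$, I would try to show that the neighbourhoods $N(h)$, $h\in H$, cover every vertex exactly $k$ times. Each is an independent set of size $n/3$, so these $3k$ sets form a fractional $3$-coloring that is "tight". I would then try to extract a small number of independent sets covering $V$, presumably four. A natural first attempt: choose two vertices of $H$ whose neighbourhoods are disjoint and whose union has size $2n/3$. I would then argue, using maximality (every non-edge lies in an induced $P_3$), that the remaining third of the vertices induces a bipartite graph. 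If that fails, I would do a case analysis on small multisets, using the tightness to force $G$ to be a blow-up of a small explicit graph whose chromatic number is at most $4$.

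\emph{Induction step.} For $r\ge4$, take $G$ maximal $K_r$-free with $\delta(G)\ge\frac{2r-5}{2r-3}n$. Brandt's theorem \cite{2001-brandt} says such a graph with large minimum degree either is $(r-1)$-partite-like or, more precisely, contains a vertex $v$ such that $G$ is the join of an independent set with a $K_{r-1}$-free graph $G'$. We need to check that $G'$, on $n'$ vertices, satisfies $\delta(G')\ge\frac{2r-7}{2r-5}n'$. This is the usual computation: a vertex of $G'$ misses at most $n-\delta(G)$ vertices, all of which lie inside $G'$, so
\[
\delta(G')\ge n' - \frac{2}{2r-3}n, \qquad n'\le \frac{2r-5}{2r-3}n .
\]
These give the required ratio. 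By induction $\chi(G')\le r$, and the independent set needs one more color, so $\chi(G)\le r+1$. If Brandt's result instead gives a homomorphism to a known family, I would verify the bound directly in that case.
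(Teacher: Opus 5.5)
Your reduction for $r\ge4$ follows the paper's, but the base case has a genuine gap. Once $\mathcal{D}_4$ fails with $k\ge2$, you only describe what you would try, and this case is essentially the whole content of the theorem.

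The missing idea is to record the multiset as an \emph{occurrence graph} $H$ on the index set $[3k]$, joining $i\ne j$ when $x_ix_j\in E(G)$.
\begin{itemize}
\item Your equality computation says precisely that $H$ is a $k$-regular triangle-free graph on $3k$ vertices. So $3k^2$ is even, and only $k\in\{2,4\}$ survive.
\item For $v\in V(G)$, the set $P(v)=\{i: vx_i\in E(G)\}$ is an independent $k$-set of $H$. Adjacent vertices of $G$ have disjoint sets $P(\cdot)$ by triangle-freeness.
\item Hence any family of pairwise intersecting independent $k$-sets of $H$ yields an independent set of $G$. So $\chi(G)$ is bounded by the chromatic number of the finite ``disjointness graph'' on the independent $k$-sets of $H$.
\end{itemize}

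For $k=2$, $H=C_6$. Taking an edge $12$ of $H$, every vertex of $G$ has a neighbor among $x_3,\dots,x_6$, so these four neighborhoods cover $V(G)$.

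For $k=4$, you must go through all twelve connected $4$-regular triangle-free graphs on $12$ vertices. Four neighborhoods suffice when $H$ contains an induced $\Gamma_3$, since $\alpha(\Gamma_3)=3$. They do not suffice in general. For the balanced $2$-blow-up of $C_6$, every $4$-subset of either side of the bipartition is independent, so any set of indices meeting all independent $4$-sets needs at least three indices on each side. In that case, and for two other graphs in the list, the paper builds explicit $4$-colorings by grouping pairwise-intersecting families of independent $4$-sets.

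Your fallback of forcing $G$ to be a blow-up of a small graph is also not available. The structure at $\delta(G)=n/3$ is not classified. Tightness only gives the map $v\mapsto P(v)$ into the disjointness graph above, and that is exactly the object that has to be $4$-colored.

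In the induction step, the key inequality points the wrong way. From $\delta(G')\ge n'-\frac{2}{2r-3}n$ you need $n'\ge\frac{2r-5}{2r-3}n$, not $n'\le\frac{2r-5}{2r-3}n$; an upper bound on $n'$ gives nothing. The correct bound holds because the independent part $I$ satisfies $|I|\le n-\delta(G)$. Then $\frac{2}{2r-3}n\le\frac{2}{2r-5}n'$, and $\delta(G')\ge\frac{2r-7}{2r-5}n'$ follows. Also, the Brandt--Goddard--Lyle theorem as stated in the paper gives the join structure outright, so no alternative ``homomorphism'' case arises.
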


Toward Problem~\ref{problem2}, we establish a close relation between subgraphs of Kneser graphs and the chromatic number of triangle-free graphs with large minimum degree; see Theorems~\ref{thm-lower bound on chi of kneser subgraph imply k3 free graph} and~\ref{thm-sufficiency}.
Before introducing Kneser graphs and Hajnal's construction, we fix some notation. All graphs are finite and simple, and denote $[m]=\{1,\dots,m\}$ for a positive integer $m$. For a graph $F$, a vertex $v$, and a set $X\subseteq V(F)$, let $N_F(v)$ be the open neighborhood of $v$, let $N_F(X)=\bigcap_{x\in X}N_F(x)$, denote $N_X(v)=N_F(v)\cap X$ and $d_X(v)=|N_X(v)|$, and let $F[X]$ be the subgraph induced by $X$; we omit the subscript $F$ when the ambient graph is clear. A graph homomorphism $\varphi:F\to F'$ is a vertex map that sends every edge of $F$ to an edge of $F'$. A blow-up of a graph is obtained by replacing each vertex with an independent set and each edge with the complete bipartite graph between the corresponding sets.

We write $k\ll n$ when $n$ is sufficiently large relative to $k$. Given functions $f(n)$ and $g(n)$, we write $f(n)=o(g(n))$ if $f(n)/g(n)\to 0$ as $n\to\infty$, and $f(n)=\Theta(g(n))$ if there exist constants $C_1,C_2>0$ such that $C_1|g(n)|\leq |f(n)|\leq C_2|g(n)|$ for all sufficiently large $n$. Unless stated otherwise, every deficiency function $f$ used below is positive and integer-valued. We call an independent set of order $\ell$ an $\ell$-independent set, and a subgraph of order $m$ an $m$-vertex subgraph. Parameters used as cardinalities are understood to be integral. When a displayed fractional quantity is used as a cardinality, a floor or ceiling compatible with the surrounding inequality is understood; since $f\geq1$, the resulting $O(1)$ discrepancies are absorbed by the displayed constant multiples of $f$ or by $O(f)$. Similarly, $KG(a-O(f),O(f))$ denotes a Kneser graph $KG(p,q)$ with integer parameters $p=a-O(f)$ and $q=O(f)$.

We now introduce Kneser graphs and Hajnal's construction. The Kneser graph $KG(n,k)$ has vertex set $\binom{[2n+k]}{n}$, the collection of all $n$-subsets of $[2n+k]$. Two vertices are adjacent if and only if the corresponding subsets are disjoint. Thus our $KG(n,k)$ is the graph usually denoted by $KG(2n+k,n)$ in some literature. It admits a $(k+2)$-coloring, and Lov\'asz \cite{1978-lovasz} proved that this coloring is optimal. Hajnal's construction yields triangle-free graphs with minimum degree at least $(\frac{1}{3}-\varepsilon)$ times their order and arbitrarily large chromatic number, where $\varepsilon>0$ may be chosen arbitrarily small.
Given integers $k\ll n\ll \ell$ with $(2n+k)\mid \ell$, the Hajnal graph $H(k,n,\ell)$ has three vertex classes. The first induces $KG(n,k)$, the second is a $2\ell$-independent set $I_1$, and the third is an $\ell$-independent set $I_2$. Every vertex of $I_1$ is adjacent to every vertex of $I_2$. The set $I_1$ is partitioned into $2n+k$ equal parts $S_1,\dots,S_{2n+k}$. For each vertex $x\in KG(n,k)$, the vertex $x$ is adjacent to every vertex of $S_i$ if and only if $i\in x$. The Hajnal graph $H(k,n,\ell)$ has $3\ell+\binom{2n+k}{n}$ vertices, minimum degree at least $\frac{2n}{2n+k}\ell$, and chromatic number at least $k+2$.
Hajnal's construction shows that, for each fixed $t\in\mathbb{N}$, the function $s_t(n)$ in Problem~\ref{problem2} satisfies $s_t(n)=o(n)$.
Toward Problem~\ref{problem2}, we prove the Kneser-graph connection stated in Theorems~\ref{thm-lower bound on chi of kneser subgraph imply k3 free graph} and~\ref{thm-sufficiency}.
Applying Theorem \ref{thm-lower bound on chi of kneser subgraph imply k3 free graph} with a detailed setting of the parameters $\ell,n,k$ in Hajnal's construction, we obtain the following.
\begin{theorem}\label{thm-slightly improve n/logn}
Let $w(N)$ be a function that tends to infinity arbitrarily slowly as $N\to\infty$. There exists a triangle-free graph $G_N$ with minimum degree at least $\frac{1}{3}|V(G_N)|-w(N)\frac{N}{\log N}$ and chromatic number at least $w(N)$.
\end{theorem}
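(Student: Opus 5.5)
The plan is to apply Hajnal's construction $H(k,n,\ell)$ directly, with parameters chosen as functions of $N$, and then pad the graph to exactly $N$ vertices by adding twin vertices. Since $w(N)\to\infty$ arbitrarily slowly, we may assume $w(N)\le \log\log N$ for large $N$; otherwise we replace $w$ by $\min\{w,\log\log N\}$, which only weakens the chromatic requirement, and we keep the factor $w(N)$ in the degree deficiency, which only helps. Set $k=\lceil w(N)\rceil$. Choose $n$ as the largest integer with $\binom{2n+k}{n}\le 2^{2n+k}\le N/\log N$, so that $2n+k=\log_2 N-\log_2\log N-O(1)$, and in particular $k\ll n$.

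Next I record the three properties of $H(k,n,\ell)$ stated before the theorem:
\begin{itemize}
\item it is triangle-free, since $KG(n,k)$ and $I_2$ have no common neighbours and two adjacent Kneser vertices are disjoint sets;
\item its chromatic number is at least $k+2\ge w(N)$, by Lov\'asz's theorem;
\item its minimum degree is at least $\frac{2n}{2n+k}\ell=\ell-\frac{k\ell}{2n+k}$, because $I_2$-vertices have degree $2\ell$, $I_1$-vertices have degree at least $\ell$, and Kneser vertices have degree exactly $n\cdot\frac{2\ell}{2n+k}$.
\end{itemize}
Writing $M=\binom{2n+k}{n}$, choose $\ell$ as the largest multiple of $2n+k$ with $3\ell+M\le N$. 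The remaining $r:=N-3\ell-M<3(2n+k)$ vertices are added as twins: $\lfloor r/3\rfloor$ or so as copies of a vertex of $I_2$, and the rest as copies of vertices of $I_1$, one per part $S_i$ in rotation. A twin of a vertex has the same neighbourhood, so the graph stays triangle-free, its chromatic number does not drop, and no degree decreases. This yields a graph $G_N$ on exactly $N$ vertices.

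It remains to verify the deficiency bound. We have
\[
\frac{N}{3}-\delta(G_N)\le \frac{M+r}{3}+\frac{k\ell}{2n+k}\le \frac{N}{3\log N}+O(\log N)+\frac{kN}{3(\log_2N-\log_2\log N-O(1))}.
\]
Since $\log_2 N=\log N/\ln 2>1.4\log N$, the last term is at most $\frac{k N}{3\cdot 1.3\log N}$ for large $N$. Together with $k\le w(N)+1$ and $w(N)\to\infty$, the whole expression is at most $w(N)\frac{N}{\log N}$ for all sufficiently large $N$.

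The main obstacle is bookkeeping rather than an idea. The parameters depend on $N$ while the order must come out to exactly $N$ (the divisibility requirement $(2n+k)\mid\ell$ and the $\binom{2n+k}{n}$ Kneser vertices both interfere), and the twin-padding step is what resolves this. The one point needing care is that padding $I_1$ unevenly changes the Kneser vertices' degrees only upward, so the minimum-degree estimate survives. I would also double-check the constant comparison between $\log_2$ and $\log$, which is where the slack for the $M/3$ term and the padding comes from.
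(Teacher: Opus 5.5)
Your proposal is correct and is essentially the paper's argument. Both use Hajnal's construction with $k\approx w(N)$ and $n=\Theta(\log N)$. The paper phrases it through Theorem~\ref{thm-lower bound on chi of kneser subgraph imply k3 free graph} applied to $KG(n,k)\subseteq KG(N,\frac{k}{n}N)$; as written, its graph then has about $3N$ vertices and $N$ ranges only over the special values $\frac{2n^2}{k(2n+k)}\binom{2n+k}{n}$. Your twin padding instead gives exactly $N$ vertices for every large $N$, and your explicit constant check is more careful than the paper's sketch.

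One correction: capping $w$ by $\log\log N$ is not a valid reduction as you justify it. The conclusion is not monotone in $w$, and the cap lowers the chromatic bound you deliver below $w(N)$. The cap is harmless, though, because the theorem is only meant for slowly growing $w$, and the paper makes the same ``without loss of generality'' assumption. In fact your parameters with $k=\lceil w(N)\rceil$ work verbatim as long as $k<n$, which is what makes $KG(n,k)$ triangle-free; for example, they work for all $w(N)\le\frac14\log_2 N$.
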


Combining Theorem~\ref{thm-sufficiency} below, a basic proposition on Kneser graphs, and a result relating order, odd girth, and chromatic number \cite{1984-szemeredi}, we obtain the following constant chromatic bound below the threshold $\frac{1}{3}n$.
\begin{theorem}\label{thm-break the border}
For every $0<\delta<1$ and $\e>0$, and for all sufficiently large $n$, every $n$-vertex triangle-free graph with minimum degree at least $\frac{1}{3}n-n^{1-\delta}$ has chromatic number at most
\[
10^{391}+1+\left\lceil\frac{(1+\e)(1-\delta)}{\delta}\right\rceil.
\]
\end{theorem}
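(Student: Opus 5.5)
Throughout, set $f(n)=n^{1-\delta}$, and let $G$ be an $n$-vertex triangle-free graph with $\delta(G)\ge \frac n3-f(n)$.

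\textbf{Reduction to the maximal case.} Adding edges can only increase the chromatic number and the minimum degree. So I may add edges to $G$ until it becomes maximal triangle-free, and it suffices to prove the bound for that supergraph. If its chromatic number is below $10^{391}$ there is nothing to prove. Otherwise I invoke Theorem~\ref{thm-sufficiency}, the structural statement announced in the abstract. It gives a bipartite subgraph with parts $A$ and $B$ of orders $\frac n3-O(f)$ and $\frac{2n}3-O(f)$. It also gives a homomorphism from $G[R]$, where $R=V(G)\setminus(A\cup B)$, to a Kneser graph $KG(p,q)$ with $p=\frac n3-O(f)$ and $q=O(f)$.

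\textbf{Coloring $A\cup B$ and $R$ separately.} The vertices of $A\cup B$ will receive colors from a palette of bounded size; I expect $A$ and $B$ to be independent, or nearly so in a controlled way, which is where the constant $10^{391}$ comes from. The vertices of $R$ get a separate palette. A union of disjoint palettes gives a proper coloring, so I need $\chi(G[R])\le 1+\lceil (1+\e)(1-\delta)/\delta\rceil$, up to the $10^{391}$ accounting.

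\textbf{Size of $R$ and the Kneser image.} First I will show $|R|=O(f)$. Then the image of $R$ under the homomorphism is an $O(f)$-vertex subgraph $K$ of $KG(p,q)$, where $q=O(f)=O(n^{1-\delta})$ and $p=\Theta(n)$.

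The basic Kneser proposition I would use is that two vertices of $KG(p,q)$ at distance $2j$ in the complement structure force odd girth about $2p/q$. More precisely, the odd girth of $KG(p,q)$ is $2\lceil p/q\rceil+1$, which here is at least $c\,n^{\delta}$. Hence $K$ has odd girth $g\ge c n^{\delta}$ and order $m=O(n^{1-\delta})$.

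\textbf{Applying the odd girth bound.} By \cite{1984-szemeredi}, a graph on $m$ vertices with odd girth $g$ has chromatic number at most about $\frac{(1+o(1))\log m}{\log (g/2)}+1$, or a similar bound of the form $\log m/\log g$. Substituting gives
\[
\chi(K)\le 1+\frac{(1+\e)(1-\delta)\log n}{\delta\log n}
\]
for large $n$, which after ceilings is $1+\lceil(1+\e)(1-\delta)/\delta\rceil$. Since $\chi(G[R])\le \chi(K)$ via the homomorphism, this gives the required bound on $R$.

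\textbf{Main obstacle.} I expect the hard part to be matching the exact form of the Szemerédi-type bound. In particular, the $\log$ of the odd girth must be $\delta\log n$ and not something with a weaker constant. I also need the $O(f)$ constants in the order and in $q$ not to spoil the ratio. These constants only affect lower-order terms absorbed into $\e$, and I would check that carefully.
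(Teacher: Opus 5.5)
Your argument is correct, but it runs through a different reduction from the one in the paper. What you invoke in your first step is the structural Theorem~\ref{thm-24000}, not Theorem~\ref{thm-sufficiency}. Take $f=a_n=\lceil n^{1-\delta}\rceil$, since the theorem needs an integer-valued $f$. If $\chi(G)\ge 10^{391}$, that theorem gives a partition $V(G)=I_1\cup I_2\cup A$ with the following properties:
\begin{itemize}
\item $I_1,I_2$ are genuinely independent;
\item $|A|\le n-(\frac{2}{3}n-9f)-(\frac{n}{3}-24f)=33f$;
\item there is a homomorphism $G[A]\to KG(\frac{n}{3}-44f,97f)$.
\end{itemize}
So the two large parts cost just $2$ colors. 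The constant $10^{391}$ is not a cost of coloring them. It is the threshold in the hypothesis of Theorem~\ref{thm-24000}, and it is paid only in the branch $\chi(G)<10^{391}$.

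For $A$, use Theorem~\ref{thm-odd girth, chi, vertex number} in its exact form rather than a $\log m/\log g$ heuristic. Set $q:=\lceil(1+\varepsilon)(1-\delta)/\delta\rceil$, so that $q>(1-\delta)/\delta$. The odd girth is at least $\frac{2(n/3-44f)}{97f}+1=\Omega(n^{\delta})$. This exceeds $4q(33f)^{1/q}=O(n^{(1-\delta)/q})$ for large $n$, so $\chi(G[A])\le q+1$. Odd girth transfers to $G[A]$ through the homomorphism, as you note. Altogether $\chi(G)\le\max\{10^{391}-1,\,q+3\}$, which implies the stated bound.

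The paper instead uses Theorem~\ref{thm-sufficiency} as a black box. It checks directly that every $a_n$-vertex subgraph of $KG(P_n,21a_n)$ is $(q+1)$-colorable, by the same odd-girth and Kierstead--Szemer\'edi--Trotter computation, and then adds $10^{391}$. At the level of this proof, that route needs no case split on $\chi(G)$ and no homomorphism step, and it yields an additive bound.

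Your route gives the slightly sharper $\max\{10^{391}-1,q+3\}$, but it is not more elementary. Theorem~\ref{thm-24000} is extracted from the proof of Theorem~\ref{thm-sufficiency}, so both rest on the same analysis. The decisive step is also identical: odd girth $\Omega(n^{\delta})$ against order $O(n^{1-\delta})$, made to work by the strict inequality $q>(1-\delta)/\delta$.
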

With $s_t(n)$ interpreted as the extremal deficiency defined in Problem~\ref{problem2}, this theorem implies that, for every $t\geq10^{391}+3$ and $\varepsilon>0$,
\[
s_t(n)\geq n^{\,1-\frac{1+\varepsilon}{t-10^{391}-1+\varepsilon}}
\]
for all sufficiently large $n$.
Indeed, set $\delta_0=(1+\varepsilon)/(t-10^{391}-1+\varepsilon)$ and apply Theorem~\ref{thm-break the border} with any $0<\eta<\varepsilon$ in place of $\varepsilon$. Then
\[
\left\lceil\frac{(1+\eta)(1-\delta_0)}{\delta_0}\right\rceil\leq t-10^{391}-2,
\]
so every graph satisfying the corresponding minimum-degree condition is $(t-1)$-colorable.
Analyzing the proof of Theorem \ref{thm-sufficiency} more carefully, we are able to partially characterize graphs with minimum degree at least $\frac{1}{3}n-f(n)$ and large chromatic number (if such graphs exist). It is interesting that any such graph must resemble Hajnal's construction.
\begin{theorem}\label{thm-24000}
Let $f$ be a positive integer-valued function satisfying $f(n)=o(n)$, and assume that $n$ is sufficiently large. If $G$ is an $n$-vertex $K_3$-free graph with minimum degree at least $\frac{1}{3}n-f(n)$ and chromatic number at least $10^{391}$, then $G$ admits a partition $V(G)=I_1\cup I_2\cup A$ such that $I_1,I_2$ are independent, $|I_1|\geq \frac{2}{3}n-9f(n)$, $|I_2|\geq\frac{n}{3}-24f(n)$, and $G[A]$ admits a homomorphism to the Kneser graph $KG(\frac{1}{3}n-44f(n),97f(n))$.
\end{theorem}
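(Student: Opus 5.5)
We first reduce to the maximal case. Adding edges to $G$ while keeping it triangle-free gives a maximal triangle-free supergraph $G'$ on the same vertex set. It still has $\delta(G')\ge \frac n3-f(n)$ and $\chi(G')\ge 10^{391}$. A partition of $G'$ with the required properties is also valid for $G$: independence and homomorphisms survive deleting edges. So we assume $G$ is maximal triangle-free.

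\textbf{Step 1: finding a violated multiset.} By Theorem~\ref{thm-D3 and D4 implies blow-up of Vega graph}, if $G$ satisfied $\mathcal D_4$ it would be a blow-up of an Andr\'asfai or Vega graph, hence $4$-colorable. Therefore $\mathcal D_4$ fails. So for some $k\le 4$ there is a $3k$-element multiset $H$ with $|N(y)\cap H|\le k$ for every $y$. Double counting gives
\[
\sum_{h\in H} d(h)\le kn,
\]
while the minimum-degree bound gives $\sum_{h\in H} d(h)\ge 3k(\frac n3-f)=kn-3kf$. Hence almost every vertex sees exactly $k$ elements of $H$, and every $h\in H$ has degree at most $\frac n3+O(f)$. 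Grouping vertices by their trace $N(y)\cap H$ and using triangle-freeness partitions $V(G)$ into $O(1)$ classes. Each class is a common neighbourhood of a set of elements of $H$, so it is independent, and each has size $\frac n3\pm O(f)$ or is a small exceptional set of size $O(f)$.

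\textbf{Step 2: the two independent sets.} We analyse the structure of these classes, using that $\chi\ge 10^{391}$ forbids $G$ from being close to a blow-up of a small graph. If the classes carried all but $O(f)$ vertices in a bounded-chromatic pattern, then the leftover $O(f)$ vertices would have to be handled. For those we use the Thomassen/\L uczak--Thomass\'e style VC-dimension bound: a triangle-free graph with minimum degree close to $n/3$ and no suitable structure has chromatic number bounded by roughly $10^{391}$ (this is where the constant enters). That would give a contradiction.

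The surviving configuration should be the following. There is an independent set $I_2$ that is the common neighbourhood of one element $h_0\in H$, with $|I_2|\ge \frac n3-O(f)$. There is also an independent set $I_1$ of size $\frac{2n}3-O(f)$, formed by vertices with a neighbour in $I_2$ but no edge to a dense part of $I_2$. This should be justified by maximality: any two non-adjacent vertices have a common neighbour. The degree count $d(v)\le n-|I_1|-|I_2|+d_{I_1}(v)$ pins down the sizes, and bookkeeping yields the constants $9$ and $24$.

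\textbf{Step 3: the Kneser homomorphism.} Let $A=V\setminus(I_1\cup I_2)$, so $|A|=O(f)$. No vertex of $A$ can have many neighbours in $I_2$: otherwise it could join $I_1$, or we rearrange. Hence $d_{I_1}(a)\ge \frac n3-O(f)$ for each $a\in A$. Fix a subset $S\subseteq I_1$ of size $p=\frac n3-44f$ together with an injection into $[2p+q]$ where $2p+q\approx|I_1|$, giving $q=97f$. Map each $a\in A$ to a $p$-subset of $N_{I_1}(a)$. Adjacent $a,a'$ have disjoint neighbourhoods by triangle-freeness, so their images are disjoint and the map is a homomorphism to $KG(p,q)$. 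Arranging the sizes so that the subsets live in a ground set of size exactly $2p+q$ gives the $44f$ and $97f$.

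The main obstacle is Step 2. It is the structural case analysis that turns the failure of $\mathcal D_4$, together with large chromatic number, into exactly one large bipartite piece $I_1,I_2$, rather than a near blow-up of some other bounded graph. I would try first to show, via the trace classes and maximality, that every configuration other than the Hajnal-type one forces $\chi\le 10^{391}-1$.
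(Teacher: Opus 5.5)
There is a genuine gap. Your Step~2 carries the entire content of the theorem, and as written it is a plan, not an argument. The tool you name for it also cannot work. The \L uczak--Thomass\'e VC-dimension bound for $\delta(G)\ge\frac n3-a$ is $\max\{10^5,12(a+1)\}$, which is unbounded once $a=f(n)\to\infty$, so for a general $f=o(n)$ it gives no contradiction with $\chi(G)\ge 10^{391}$. The constant $10^{391}$ has a different origin.

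Your Step~1 already shows that all but $O(f)$ vertices lie in $O(1)$ independent trace classes. So a ``bounded-chromatic pattern'' on them always exists. The whole difficulty is handling the leftover set $R$ of up to $3f$ vertices with a number of colors independent of $f$, and you give no mechanism for that. Your extra claim that each class has order $\frac n3\pm O(f)$ is unjustified for $k\ge 2$, where classes are intersections such as $N(x_i)\cap N(x_j)$; it is also not needed.

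The ``surviving configuration'' is asserted rather than derived, and it misdescribes the structure. The two large independent sets are not trace classes of $H$. Also, vertices of the big set must see almost all of $I_2$, not avoid a dense part of it. Without this step, the constants $9$ and $24$ have no source.

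The missing idea is the machinery of the proof of Theorem~\ref{thm-sufficiency}, from which the paper reads this theorem off.
\begin{enumerate}
\item Fix an edge $uv$ and disjoint independent sets $I_1\subseteq N(u)\cap D$ and $I_2\subseteq N(v)\cap D$ of order $\frac n3-4f$, and put $C=D\setminus(I_1\cup I_2)$.
\item Use that the vertices \emph{controlled} by a fixed set $A$ (those with $d_A(w)>|A|/2$) form an independent set.
\item An iterative union-of-neighbourhoods argument (Claim~\ref{lem-color the vertex with mid neighbors in I}, costing $2^{q_\varepsilon}$ colors with $q_\varepsilon=1292$) removes all of $R$ except a set $Y$ whose neighbourhoods concentrate on $I_1\cup I_2$. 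This is where $18\cdot 2^{1292}+O(1)<10^{391}$ comes from.
\item Let $L=\{w\in D:d_C(w)>|C|/2\}$, which is independent, let $J=D\setminus L$, and let $S\subseteq J$ contain both endpoints of every edge of $J$. Claim~\ref{claim-upper bound of S} gives $|S|\le 3(\frac23 n+f-|L|)$.
\item Greedily chosen representatives in $Y$ have pairwise disjoint $S$-neighbourhoods. Hence if $|L|<\frac23n-9f$, then $Y$ is $61$-colorable (Claim~\ref{claim-lower bound on L}).
\item Since $\chi(G)\ge 10^{391}$, this forces $|L|\ge\frac23 n-9f$ and $|S|\le 30f$. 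The independent sets are then a subset of $L$ and a subset of $J\setminus S$, the latter of order at least $\frac n3-24f$.
\end{enumerate}

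Your Step~3 matches the paper's final step in spirit, but two things need fixing.
\begin{itemize}
\item You need the explicit dichotomy; ``we rearrange'' is not enough. With $|A|=33f$, every $x\in I_1$ has $d_{I_2}(x)\ge\frac n3-34f$. By triangle-freeness, each $w\in A$ then has either $d_{I_2}(w)\ge\frac n3-34f$ or $d_{I_2}(w)\le 10f$. In the first case $w$ has no neighbour in $I_1$ and can be moved into it. In the second case $d_{I_1\cup A_1}(w)\ge\frac n3-44f$.
\item The Kneser ground set must be the whole enlarged set $I_1\cup A_1$, not a $p$-subset of $I_1$. Its order is at most $n-\delta\le 2(\frac n3-44f)+97f$.
\end{itemize}
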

Note that this theorem holds for every positive integer-valued function $f(n)=o(n)$ and all sufficiently large $n$; the exact form of $f(n)$ is not important. A similar result has also been obtained by Kim, Liu, Shangguan, Wang, Wu, and Xue \cite{2025-liu}. Our approaches are different.
To extend these results to $K_r$-free graphs, we prove the following theorem connecting maximal $K_r$-free graphs and $K_{r-1}$-free graphs.
\begin{theorem}\label{thm-decomposition of Kr-free graph with large minimum degree}
Fix an integer $r\geq4$, and let $f$ be a positive integer-valued function satisfying $f(n)=o(n)$. For all sufficiently large $n$, every $n$-vertex maximal $K_r$-free graph $G$ with $\delta(G)\geq \frac{2r-5}{2r-3}n-f(n)$ satisfies one of the following. All constants implicit in $O(f(n))$ may depend on $r$ only.
\begin{enumerate}
\item $G$ is the join of a nonempty independent set and a $K_{r-1}$-free graph;
\item $V(G)$ admits a partition $V(G)=I_0\cup I_1\cup \dots\cup I_{r-2}\cup A$ such that $\bigl||I_0|-\frac{n}{2r-3}\bigr|=O(f(n))$; for each $i\in [r-2]$, $\bigl||I_i|-\frac{2}{2r-3}n\bigr|=O(f(n))$; $|A|=O(f(n))$, and $G[A]$ admits a homomorphism to a Kneser graph $KG(\frac{n}{2r-3}-O(f(n)),O(f(n)))$.
\end{enumerate} 
\end{theorem}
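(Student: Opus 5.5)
Our plan is to split on whether $G$ has a vertex whose neighbourhood is large and $K_{r-1}$-free in a structured way, reducing to the triangle-free situation of Theorem~\ref{thm-24000} when it is not.

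\textbf{Step 1: cliques and common neighbourhoods.} Write $\delta=\delta(G)\ge \frac{2r-5}{2r-3}n-f$. Since $G$ is maximal $K_r$-free and $\delta>\frac{r-3}{r-2}n$ for large $n$, $G$ contains $K_{r-1}$. Any $r-3$ vertices forming a clique $Q$ have common neighbourhood $N(Q)$ of size at least $(r-3)\delta-(r-4)n\ge \frac{3}{2r-3}n-(r-3)f$. The graph $G[N(Q)]$ is triangle-free. Its minimum degree is at least $|N(Q)|-(n-\delta)\ge \frac{1}{3}|N(Q)|-O(f)$ when $|N(Q)|$ is near its lower bound.

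\textbf{Step 2: case analysis.} If some vertex $v$ satisfies $N(v)\supseteq V(G)\setminus I$, where $I$ is the independent class containing $v$ (its false twins), then $G$ is the join of $I$ and $G-I$. The graph $G-I$ is $K_{r-1}$-free, giving outcome~1. So we assume no vertex is complete to the rest; maximality then gives, for each non-edge $xy$, a $K_{r-2}$ in $N(x)\cap N(y)$.

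Next take an $(r-3)$-clique $Q$ chosen so that $N(Q)$ is as small as possible. Apply Theorem~\ref{thm-24000}, or its underlying stability argument via Theorem~\ref{thm-D3 and D4 implies blow-up of Vega graph}, to the triangle-free graph $G[N(Q)]$ on $m=\frac{3}{2r-3}n\pm O(f)$ vertices with minimum degree at least $\frac{m}{3}-O(f)$. Two cases arise:
\begin{itemize}
\item If $\chi(G[N(Q)])$ is bounded, we get a near-balanced blow-up structure, and then iterate on cliques.
\item Otherwise we get $I_1,I_2,A$ with $|I_1|\approx \frac{2m}{3}=\frac{2n}{2r-3}$ and $|I_2|\approx\frac{m}{3}=\frac{n}{2r-3}$, and $G[A]$ maps to $KG(\frac{n}{2r-3}-O(f),O(f))$.
\end{itemize}
The set $I_2$ will serve as $I_0$.

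\textbf{Step 3: globalize.} The goal is to extend the local picture inside $N(Q)$ to a partition of all of $V(G)$. Each vertex outside $N(Q)$ misses some vertex of $Q$. Vertices of $Q$ and their twins form classes of size about $\frac{2n}{2r-3}$, because a vertex misses at most $n-\delta=\frac{2n}{2r-3}+f$ vertices.

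Degree counting sharpens this. Each non-neighbourhood has size at most $\frac{2}{2r-3}n+f$. The $r-3$ classes coming from $Q$ plus $I_1$ account for $r-2$ classes of size $\frac{2n}{2r-3}\pm O(f)$, and $I_0$ accounts for $\frac{n}{2r-3}$. This forces all but $O(f)$ vertices into these classes, and we define $A$ as the leftover set.

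We then show that $G[A]$ maps into the Kneser graph. The map sends $a\in A$ to its neighbourhood trace on $I_1$, coarsened into $O(f)$-defect sets exactly as in Theorem~\ref{thm-24000}. Adjacent vertices of $A$ have nearly disjoint traces, since otherwise a triangle would lie in some $N(Q')$, which is a $K_r$ overall.

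The bounded-chromatic case should instead lead to outcome~1 or to a contradiction. There, near-regularity forces a dominating twin class, by the argument used for Theorem~\ref{thm-main thm}.

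\textbf{Main obstacle.} We expect the hardest part to be Step~3. The difficulty is showing that the classes obtained from different cliques $Q$ are consistent, so that the independent sets $I_i$ are genuinely independent globally and have sizes correct up to $O(f)$. This will need an accounting argument: sum the non-degrees of the vertices in a $K_{r-1}$ and use that each vertex misses at most $\frac{2n}{2r-3}+f$ vertices. We would try this first, and it should pin each class up to $O(f)$ and give $|A|=O(f)$.
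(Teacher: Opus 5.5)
There is a genuine gap: your reduction depends on Theorem~\ref{thm-24000}, and that theorem gives structure only under the hypothesis $\chi\geq 10^{391}$. When $\chi(G[N(Q)])$ is small it says nothing, yet the statement you are proving has no chromatic hypothesis. The bounded case therefore carries the full weight of the theorem, and what you write there is not an argument.

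No result available yields a ``near-balanced blow-up structure'' from bounded chromatic number. Theorem~\ref{thm-D3 and D4 implies blow-up of Vega graph} needs a \emph{maximal} triangle-free graph with property $\mathcal D_4$. In general $G[N(Q)]$ has neither property: adding a non-edge inside $N(Q)$ creates a $K_r$ whose other vertices need not lie in $N(Q)$, and the minimum degree is only $\frac13|N(Q)|-O(f)$. The argument for Theorem~\ref{thm-main thm} does not transfer either. It works only because $\delta\geq n/3$ forces the equalities \eqref{condition-exact degree of d(xi)} and \eqref{condition-every vertex has fixed number neighbor in H}. With an $O(f)$ deficit up to $3kf$ incidences may be missing, so occurrence-neighbourhoods can be independent sets of size less than $k$ and the case analysis collapses. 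Your claim that this case ends in outcome~1 or a contradiction is also unproved. If a non-join graph arises there, you would have to establish outcome~2 with no tool at hand. Finally, $|N(Q)|=\frac{3n}{2r-3}\pm O(f)$ is false in general: the balanced complete $(r-1)$-partite graph satisfies the hypotheses, and every $N(Q)$ in it has order $\frac{2n}{r-1}$. Counting gives only the lower bound.

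Even in the large-$\chi$ case, Step~3 contains essentially all of the theorem, and it is only asserted. The bound $|V(G)\setminus N(q)|\leq\frac{2n}{2r-3}+f$ for $q\in Q$ shows that these non-neighbourhoods are nearly disjoint, but not that they are independent; they are not twin classes. So you have no independent classes outside $N(Q)$. Also, a Kneser homomorphism needs \emph{exactly} disjoint traces for adjacent vertices of $A$, not nearly disjoint ones.

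The paper never uses Theorem~\ref{thm-24000} here and makes no chromatic split.
\begin{enumerate}
\item It fixes a maximum independent set $I$ and $u\in I$. Theorem~\ref{thm-lyle's thm} makes $G-N(u)$ triangle-free for $r\geq5$. If $V(G)\setminus N(u)$ is independent, maximality makes it complete to $N(u)$, which is outcome~1.
\item Otherwise an edge $s_1s_2$ with $s_1\in I$ is extended to a $K_{r-2}$, say $S$. The count $|N(S)\cap N(u)|\geq(2r-3)\delta-(2r-5)n+|I|\geq|I|-(2r-3)f$, set against $|N(S)\cap N(u)|\leq|I|$, forces every estimate used to be tight within $O(f)$.
\item For a $K_{r-2}$, say $T$, inside $N(u)\cap N(s_2)$, the vertices of $N(u)$ adjacent to exactly $T\setminus\{t_i\}$ form the common neighbourhood of the $(r-2)$-clique $(T\setminus\{t_i\})\cup\{u\}$. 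These sets are therefore genuinely independent, and they cover $N(u)$ up to $O(f)$ vertices.
\item Counting then pins down their sizes, $|I|$, and $|V(G)\setminus(N(u)\cup I)|$. Leftover vertices are assigned according to which class they barely meet.
\item Exact disjointness for the Kneser map comes from greedily extending a common neighbour to a $K_r$.
\end{enumerate}
This clique-and-maximum-independent-set counting is the missing idea you would need in place of your Steps~2--3.
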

Applying this theorem, we can easily extend Theorems \ref{thm-break the border} and \ref{thm-24000} to $K_r$-free graphs (see Theorems \ref{thm-Kr-free graph with large minimum degree has bounded chi} and \ref{thm-Kr-free case-extremal graph} below).

The remainder of the paper is organized as follows.
In Section~\ref{section-minimum degree n/3 and 4 colors}, we prove Theorem~\ref{thm-main thm}, thereby answering Problem~\ref{problem1}.
In Section~\ref{section-connection between kneser graph}, we establish the connection between $f$-vertex subgraphs of the Kneser graph $KG(n,f)$ and triangle-free graphs with large minimum degree stated in Theorems~\ref{thm-lower bound on chi of kneser subgraph imply k3 free graph} and~\ref{thm-sufficiency}. The same argument also yields Theorem~\ref{thm-24000}.
In Section~\ref{section-new upper and lower bound}, we prove Theorems~\ref{thm-slightly improve n/logn} and~\ref{thm-break the border}.
In Section~\ref{section-connect Kr-free and Kr-1-free graphs}, we prove Theorem~\ref{thm-decomposition of Kr-free graph with large minimum degree}, which relates maximal $K_r$-free graphs of large minimum degree to $K_{r-1}$-free graphs. We then derive the $K_r$-free extensions in Theorems~\ref{thm-Kr-free graph with large minimum degree has bounded chi} and~\ref{thm-Kr-free case-extremal graph}.
The final section contains remarks and open problems.

\section{Proof of Theorem~\ref{thm-main thm}}\label{section-minimum degree n/3 and 4 colors}
We first treat the triangle-free case; the $K_r$-free case will then follow from a result of Brandt \cite{2001-brandt}.
\subsection{The triangle-free case}
Let $G$ be an $n$-vertex triangle-free graph with minimum degree at least $\frac{n}{3}$. By adding edges if necessary, we may assume that $G$ is maximal triangle-free; a coloring of the resulting supergraph restricts to $G$. If $G$ satisfies $\D_4$, then Theorem~\ref{thm-D3 and D4 implies blow-up of Vega graph} implies that $G$ is either a blow-up of an Andr\'{a}sfai graph $\Gamma_i$ for some $i\geq1$ or a blow-up of a Vega graph; in either case, $G$ is $4$-colorable. Thus we may assume that $G$ violates $\D_4$. Hence there are an integer $k\in\{1,2,3,4\}$ and an indexed $3k$-element multiset $\mathcal H=(x_i)_{i\in[3k]}$ of vertices of $G$ such that every vertex of $G$ is adjacent to at most $k$ occurrences in $\mathcal H$.
Combining this with $\delta(G)\geq\frac{n}{3}$ and $d(x_1)+\dots+d(x_{3k})\geq kn$, we obtain
\begin{equation}\label{condition-exact degree of d(xi)}
    d(x_1)=\dots=d(x_{3k})=\frac{n}{3}
\end{equation}
and
\begin{equation}\label{condition-every vertex has fixed number neighbor in H}
    \text{every vertex of $G$ is adjacent to exactly $k$ occurrences in $\mathcal H$}.
\end{equation}
Define the \emph{occurrence graph} $H$ on the index set $[3k]$ by joining distinct indices $i$ and $j$ precisely when $x_ix_j\in E(G)$. Thus repeated occurrences of the same vertex of $G$ appear in $H$ as pairwise nonadjacent twins. Equation~\eqref{condition-every vertex has fixed number neighbor in H} implies that $H$ is a $k$-regular triangle-free graph on $3k$ indexed vertices. For $v\in V(G)$, let
\[
P_{\mathcal H}(v):=\{i\in[3k]:vx_i\in E(G)\}.
\]
Then $P_{\mathcal H}(v)$ is an independent $k$-set of $H$. We call $P_{\mathcal H}(v)$ the \emph{indexed occurrence-neighborhood} of $v$ and reserve $N_H(i)$ for the ordinary neighborhood of a vertex $i$ in $H$. If two vertices of $G$ are assigned the same color because their indexed occurrence-neighborhoods intersect, the common index corresponds to one original vertex $x_i$ adjacent to both, so triangle-freeness makes that color class independent. By the handshaking lemma, $k\notin\{1,3\}$.

If $k=2$, then $H$ is a $6$-vertex $2$-regular triangle-free graph and hence a copy of $C_6$. Relabel the occurrences so that $\{1,2\}\in E(H)$; then $N(x_1)$ and $N(x_2)$ are disjoint. Since every vertex is adjacent to at most one of $x_1,x_2$ and to at least one of $x_3,x_4,x_5,x_6$, we have $N(x_3)\cup N(x_4)\cup N(x_5)\cup N(x_6)\supseteq V(G)$. Hence $G$ is $4$-colorable.

If $k=4$, then $H$ is a $12$-vertex $4$-regular triangle-free graph. This graph is connected, since every component of a $4$-regular triangle-free graph has at least eight vertices. Meringer's GENREG catalogue \cite{1999-meringer} lists exactly twelve connected $12$-vertex $4$-regular graphs of girth at least $4$.
\footnote{The catalogue is available at \url{https://www.mathe2.uni-bayreuth.de/markus/REGGRAPHS/12_4_4.html}.}
Denote these graphs by $H_1,H_2,\dots,H_{12}$; see Figure~\ref{fig:twelve 12-vtx 4-reg K3-free graph}.
\begin{figure}[!htbp]
    \centering
    \begin{subfigure}[b]{0.2\textwidth}
        \centering

        \caption{$H_1$}
        \label{fig:H1}
    \end{subfigure}
    \hfill 
\centering
    \begin{subfigure}[b]{0.2\textwidth}
        \centering
        %
        \caption{$H_2$}
        \label{fig:H2}
    \end{subfigure}
    \hfill 
\centering
    \begin{subfigure}[b]{0.2\textwidth}
        \centering
        %
        \caption{$H_3$}
        \label{fig:H3}
    \end{subfigure}
    \hfill 
\centering
    \begin{subfigure}[b]{0.2\textwidth}
        \centering
        %
        \caption{$H_7$}
        \label{fig:H7}
    \end{subfigure}
    \hfill 
    \begin{subfigure}[b]{0.2\textwidth}
        \centering
        %
        \caption{$H_8$}
        \label{fig:H8}
    \end{subfigure}
    \hfill 
    \begin{subfigure}[b]{0.2\textwidth}
        \centering
        %
        \caption{$H_9$}
        \label{fig:H9}
    \end{subfigure}
    \hfill 
    \vspace*{1cm}
    \begin{subfigure}[b]{0.2\textwidth}
        \centering
        %
        \caption{$H_{10}$}
        \label{fig:H10}
    \end{subfigure}
    \hfill 
    \begin{subfigure}[b]{0.2\textwidth}
        \centering
        %
	        \caption{$H_{11}$}
        \label{fig:H11}
    \end{subfigure}
    \hfill 
    \begin{subfigure}[b]{0.2\textwidth}
        \centering
        %
        \caption{$H_{12}$}
        \label{fig:H12}
    \end{subfigure}
    \caption{Twelve $12$-vertex $4$-regular triangle-free graphs}
    \label{fig:twelve 12-vtx 4-reg K3-free graph}
\end{figure}
We divide the proof according to the isomorphism type of $H$.

Assume that $H$ is one of $H_6,H_8,H_9,H_{10},H_{11},H_{12}$. Each of these graphs contains $\Gamma_3$ as an induced subgraph: in Figure~\ref{fig:twelve 12-vtx 4-reg K3-free graph}, the blue vertices induce $\Gamma_3$ and are labeled as in Figure~\ref{fig:andrasfai graph2}.
Since $\Gamma_3$ has $8$ vertices and $\alpha(\Gamma_3)=3$, Equation~\eqref{condition-every vertex has fixed number neighbor in H} implies that each vertex of $G$ is adjacent to at most three occurrences indexed by $V(\Gamma_3)$ and hence to at least one occurrence indexed by $V(H)\setminus V(\Gamma_3)$. As $|V(H)\setminus V(\Gamma_3)|=4$ and $\bigcup_{i\in V(H)\setminus V(\Gamma_3)}N_G(x_i)\supseteq V(G)$, the graph $G$ is $4$-colorable.

Now assume that $H$ is $H_5$ or $H_7$. Both graphs are bipartite and twin-free, meaning that no two distinct vertices have the same neighborhood. In particular, the occurrence map $i\mapsto x_i$ is injective in these two cases, because repeated occurrences would be nonadjacent twins in $H$. Let $A$ and $B$ be the two parts of $H$; then $|A|=|B|=6$.
Take an arbitrary independent set $I$ in $H$ with $|I|=4$. 
If $|A\cap I|=1$ and $|B\cap I|=3$ (the case $|A\cap I|=3$ and $|B\cap I|=1$ is symmetric), then the single vertex in $A\cap I$ has $4$ neighbors in $B$, all of which lie outside $I$. This forces $|B\cap I|\le 2$, a contradiction.
If $|A\cap I|=|B\cap I|=2$, then the two vertices in $A\cap I$ have at least $5$ distinct neighbors in $B$, because their neighborhoods are different. Hence $|B\cap I|\le 1$, again a contradiction.
Thus every independent set of size $4$ in $H$ is entirely contained in $A$ or entirely contained in $B$. 
By Equation~\eqref{condition-every vertex has fixed number neighbor in H}, the indexed occurrence-neighborhood of each vertex of $G$ is an independent $4$-set of $H$. Consequently, $P_{\mathcal H}(w)$ lies entirely in $A$ or entirely in $B$ for every $w\in V(G)$.
The vertices $w$ for which $P_{\mathcal H}(w)\subseteq A$ form an independent set, because any two of their indexed occurrence-neighborhoods intersect (recall that $|A|=6$ and each has order $4$). The analogous statement holds for $B$. Hence $G$ is bipartite, contradicting its maximality because $H$ is not complete bipartite.

Assume that $H=H_3$. The graph $H_3$ is bipartite and contains twins; for example, $u$ and $v$ are twins in Figure~\ref{fig:H3}. As before, let $A$ and $B$ be the two parts of $H$, with $u,v\in A$. As in the twin-free case, every independent set $I$ of order $4$ in $H$ satisfies $|A\cap I|\in\{0,2,4\}$. If $|A\cap I|=2$, then $|B\cap I|=2$, and the vertices in $B\cap I$ have no neighbors in $A\cap I$. Since $|A\setminus I|=4$ and each vertex in $B\cap I$ has $4$ neighbors in $A$, the two vertices in $B\cap I$ must be twins; denote them by $x$ and $y$. By symmetry, the two vertices in $A\cap I$ are also twins. Moreover, $H\setminus I$ induces a $2$-regular graph, necessarily $C_8$; if it induced two disjoint copies of $C_4$, then $H$ would be a balanced $2$-blow-up of $C_6$. Thus $H_3$ contains exactly two pairs of twins, say $\{u,v\}$ and $\{x,y\}$. One can now verify that every $4$-independent set in $H$ lies entirely in $A$, lies entirely in $B$, or is $\{u,v,x,y\}$. Consequently, every vertex $w\in V(G)$ satisfies $|P_{\mathcal H}(w)\cap A|=4$, $|P_{\mathcal H}(w)\cap B|=4$, or $P_{\mathcal H}(w)=\{u,v,x,y\}$.
The vertices in each of these three classes form an independent set, because any two of their indexed occurrence-neighborhoods in $A$, in $B$, or in $\{u,v,x,y\}$, respectively, intersect. Hence $G$ is $3$-colorable.

Assume that $H=H_1$, which is the balanced $2$-blow-up of $C_6$. Figure~\ref{fig:4-independent sets in H1} lists all independent sets of order $4$ in $H$.
\begin{figure}[!htbp]
    \centering
    \begin{subfigure}[b]{0.2\textwidth}
        \centering
        \begin{tikzpicture}[scale=0.4]
        \node [circle, fill=black, inner sep=0pt, minimum size=4pt] (v1) at (-0.5,0.5) {};
\node [circle, fill=black, inner sep=0pt, minimum size=4pt] (v3) at (-0.5,-0.5) {};
\node [circle, fill=blue, inner sep=0pt, minimum size=4pt] (v4) at (1.5,0.5) {};
\node [circle, fill=blue, inner sep=0pt, minimum size=4pt] (v2) at (1.5,-0.5) {};
\draw  (v1) edge (v2);
\draw  (v3) edge (v4);
\draw  (v3) edge (v2);
\draw  (v1) edge (v4);
\node [circle, fill=blue, inner sep=0pt, minimum size=4pt] (v5) at (-3,-1.5) {};
\node [circle, fill=blue, inner sep=0pt, minimum size=4pt] (v6) at (-3,-2.5) {};
\node [circle, fill=black, inner sep=0pt, minimum size=4pt] (v12) at (4,-1.5) {};
\node [circle, fill=black, inner sep=0pt, minimum size=4pt] (v11) at (4,-2.5) {};
\node [circle, fill=black, inner sep=0pt, minimum size=4pt] (v8) at (-0.5,-3.5) {};
\node [circle, fill=black, inner sep=0pt, minimum size=4pt] (v7) at (-0.5,-4.5) {};
\node [circle, fill=blue, inner sep=0pt, minimum size=4pt] (v10) at (1.5,-3.5) {};
\node [circle, fill=blue, inner sep=0pt, minimum size=4pt] (v9) at (1.5,-4.5) {};
\draw  (v1) edge (v5);
\draw  (v1) edge (v6);
\draw  (v3) edge (v5);
\draw  (v3) edge (v6);
\draw  (v6) edge (v7);
\draw  (v6) edge (v8);
\draw  (v5) edge (v8);
\draw  (v5) edge (v7);
\draw  (v7) edge (v9);
\draw  (v7) edge (v10);
\draw  (v8) edge (v10);
\draw  (v8) edge (v9);
\draw  (v9) edge (v11);
\draw  (v9) edge (v12);
\draw  (v10) edge (v11);
\draw  (v10) edge (v12);
\draw  (v12) edge (v4);
\draw  (v4) edge (v11);
\draw  (v2) edge (v12);
\draw  (v2) edge (v11);
        \end{tikzpicture}
        \caption{\color{red}{1}}
        \label{fig:color of H1, in first part}
    \end{subfigure}
    \hfill 
    \begin{subfigure}[b]{0.2\textwidth}
        \centering
        \begin{tikzpicture}[scale=0.4]
        \node [circle, fill=black, inner sep=0pt, minimum size=4pt] (v1) at (-0.5,0.5) {};
\node [circle, fill=black, inner sep=0pt, minimum size=4pt] (v3) at (-0.5,-0.5) {};
\node [circle, fill=blue, inner sep=0pt, minimum size=4pt] (v4) at (1.5,0.5) {};
\node [circle, fill=blue, inner sep=0pt, minimum size=4pt] (v2) at (1.5,-0.5) {};
\draw  (v1) edge (v2);
\draw  (v3) edge (v4);
\draw  (v3) edge (v2);
\draw  (v1) edge (v4);
\node [circle, fill=black, inner sep=0pt, minimum size=4pt] (v5) at (-3,-1.5) {};
\node [circle, fill=black, inner sep=0pt, minimum size=4pt] (v6) at (-3,-2.5) {};
\node [circle, fill=black, inner sep=0pt, minimum size=4pt] (v12) at (4,-1.5) {};
\node [circle, fill=black, inner sep=0pt, minimum size=4pt] (v11) at (4,-2.5) {};
\node [circle, fill=blue, inner sep=0pt, minimum size=4pt] (v8) at (-0.5,-3.5) {};
\node [circle, fill=blue, inner sep=0pt, minimum size=4pt] (v7) at (-0.5,-4.5) {};
\node [circle, fill=black, inner sep=0pt, minimum size=4pt] (v10) at (1.5,-3.5) {};
\node [circle, fill=black, inner sep=0pt, minimum size=4pt] (v9) at (1.5,-4.5) {};
\draw  (v1) edge (v5);
\draw  (v1) edge (v6);
\draw  (v3) edge (v5);
\draw  (v3) edge (v6);
\draw  (v6) edge (v7);
\draw  (v6) edge (v8);
\draw  (v5) edge (v8);
\draw  (v5) edge (v7);
\draw  (v7) edge (v9);
\draw  (v7) edge (v10);
\draw  (v8) edge (v10);
\draw  (v8) edge (v9);
\draw  (v9) edge (v11);
\draw  (v9) edge (v12);
\draw  (v10) edge (v11);
\draw  (v10) edge (v12);
\draw  (v12) edge (v4);
\draw  (v4) edge (v11);
\draw  (v2) edge (v12);
\draw  (v2) edge (v11);
        \end{tikzpicture}
        \caption{\color{orange}{2}}
        \label{fig:color of H1, diagonal1}
    \end{subfigure}
    \hfill 
\begin{subfigure}[b]{0.2\textwidth}
        \centering
        \begin{tikzpicture}[scale=0.4]
        \node [circle, fill=blue, inner sep=0pt, minimum size=4pt] (v1) at (-0.5,0.5) {};
\node [circle, fill=blue, inner sep=0pt, minimum size=4pt] (v3) at (-0.5,-0.5) {};
\node [circle, fill=black, inner sep=0pt, minimum size=4pt] (v4) at (1.5,0.5) {};
\node [circle, fill=black, inner sep=0pt, minimum size=4pt] (v2) at (1.5,-0.5) {};
\draw  (v1) edge (v2);
\draw  (v3) edge (v4);
\draw  (v3) edge (v2);
\draw  (v1) edge (v4);
\node [circle, fill=black, inner sep=0pt, minimum size=4pt] (v5) at (-3,-1.5) {};
\node [circle, fill=black, inner sep=0pt, minimum size=4pt] (v6) at (-3,-2.5) {};
\node [circle, fill=black, inner sep=0pt, minimum size=4pt] (v12) at (4,-1.5) {};
\node [circle, fill=black, inner sep=0pt, minimum size=4pt] (v11) at (4,-2.5) {};
\node [circle, fill=black, inner sep=0pt, minimum size=4pt] (v8) at (-0.5,-3.5) {};
\node [circle, fill=black, inner sep=0pt, minimum size=4pt] (v7) at (-0.5,-4.5) {};
\node [circle, fill=blue, inner sep=0pt, minimum size=4pt] (v10) at (1.5,-3.5) {};
\node [circle, fill=blue, inner sep=0pt, minimum size=4pt] (v9) at (1.5,-4.5) {};
\draw  (v1) edge (v5);
\draw  (v1) edge (v6);
\draw  (v3) edge (v5);
\draw  (v3) edge (v6);
\draw  (v6) edge (v7);
\draw  (v6) edge (v8);
\draw  (v5) edge (v8);
\draw  (v5) edge (v7);
\draw  (v7) edge (v9);
\draw  (v7) edge (v10);
\draw  (v8) edge (v10);
\draw  (v8) edge (v9);
\draw  (v9) edge (v11);
\draw  (v9) edge (v12);
\draw  (v10) edge (v11);
\draw  (v10) edge (v12);
\draw  (v12) edge (v4);
\draw  (v4) edge (v11);
\draw  (v2) edge (v12);
\draw  (v2) edge (v11);
        \end{tikzpicture}
        \caption{\color{blue}{3}}
        \label{fig:color of H1, diagonal2}
    \end{subfigure}
    \hfill 
    \begin{subfigure}[b]{0.2\textwidth}
        \centering
        \begin{tikzpicture}[scale=0.4]
        \node [circle, fill=black, inner sep=0pt, minimum size=4pt] (v1) at (-0.5,0.5) {};
\node [circle, fill=black, inner sep=0pt, minimum size=4pt] (v3) at (-0.5,-0.5) {};
\node [circle, fill=black, inner sep=0pt, minimum size=4pt] (v4) at (1.5,0.5) {};
\node [circle, fill=black, inner sep=0pt, minimum size=4pt] (v2) at (1.5,-0.5) {};
\draw  (v1) edge (v2);
\draw  (v3) edge (v4);
\draw  (v3) edge (v2);
\draw  (v1) edge (v4);
\node [circle, fill=blue, inner sep=0pt, minimum size=4pt] (v5) at (-3,-1.5) {};
\node [circle, fill=blue, inner sep=0pt, minimum size=4pt] (v6) at (-3,-2.5) {};
\node [circle, fill=blue, inner sep=0pt, minimum size=4pt] (v12) at (4,-1.5) {};
\node [circle, fill=blue, inner sep=0pt, minimum size=4pt] (v11) at (4,-2.5) {};
\node [circle, fill=black, inner sep=0pt, minimum size=4pt] (v8) at (-0.5,-3.5) {};
\node [circle, fill=black, inner sep=0pt, minimum size=4pt] (v7) at (-0.5,-4.5) {};
\node [circle, fill=black, inner sep=0pt, minimum size=4pt] (v10) at (1.5,-3.5) {};
\node [circle, fill=black, inner sep=0pt, minimum size=4pt] (v9) at (1.5,-4.5) {};
\draw  (v1) edge (v5);
\draw  (v1) edge (v6);
\draw  (v3) edge (v5);
\draw  (v3) edge (v6);
\draw  (v6) edge (v7);
\draw  (v6) edge (v8);
\draw  (v5) edge (v8);
\draw  (v5) edge (v7);
\draw  (v7) edge (v9);
\draw  (v7) edge (v10);
\draw  (v8) edge (v10);
\draw  (v8) edge (v9);
\draw  (v9) edge (v11);
\draw  (v9) edge (v12);
\draw  (v10) edge (v11);
\draw  (v10) edge (v12);
\draw  (v12) edge (v4);
\draw  (v4) edge (v11);
\draw  (v2) edge (v12);
\draw  (v2) edge (v11);
        \end{tikzpicture}
        \caption{\color{green}{4}}
        \label{fig:color of H1, diagonal3}
    \end{subfigure}
    \hfill\\ 
    \vspace*{0.5cm}
\begin{subfigure}[b]{0.2\textwidth}
        \centering
        \begin{tikzpicture}[scale=0.4]
        \node [circle, fill=red, inner sep=0pt, minimum size=4pt] (v1) at (-0.5,0.5) {};
\node [circle, fill=red, inner sep=0pt, minimum size=4pt] (v3) at (-0.5,-0.5) {};
\node [circle, fill=black, inner sep=0pt, minimum size=4pt] (v4) at (1.5,0.5) {};
\node [circle, fill=black, inner sep=0pt, minimum size=4pt] (v2) at (1.5,-0.5) {};
\draw  (v1) edge (v2);
\draw  (v3) edge (v4);
\draw  (v3) edge (v2);
\draw  (v1) edge (v4);
\node [circle, fill=black, inner sep=0pt, minimum size=4pt] (v5) at (-3,-1.5) {};
\node [circle, fill=black, inner sep=0pt, minimum size=4pt] (v6) at (-3,-2.5) {};
\node [circle, fill=red, inner sep=0pt, minimum size=4pt] (v12) at (4,-1.5) {};
\node [circle, fill=red, inner sep=0pt, minimum size=4pt] (v11) at (4,-2.5) {};
\node [circle, fill=red, inner sep=0pt, minimum size=4pt] (v8) at (-0.5,-3.5) {};
\node [circle, fill=red, inner sep=0pt, minimum size=4pt] (v7) at (-0.5,-4.5) {};
\node [circle, fill=black, inner sep=0pt, minimum size=4pt] (v10) at (1.5,-3.5) {};
\node [circle, fill=black, inner sep=0pt, minimum size=4pt] (v9) at (1.5,-4.5) {};
\draw  (v1) edge (v5);
\draw  (v1) edge (v6);
\draw  (v3) edge (v5);
\draw  (v3) edge (v6);
\draw  (v6) edge (v7);
\draw  (v6) edge (v8);
\draw  (v5) edge (v8);
\draw  (v5) edge (v7);
\draw  (v7) edge (v9);
\draw  (v7) edge (v10);
\draw  (v8) edge (v10);
\draw  (v8) edge (v9);
\draw  (v9) edge (v11);
\draw  (v9) edge (v12);
\draw  (v10) edge (v11);
\draw  (v10) edge (v12);
\draw  (v12) edge (v4);
\draw  (v4) edge (v11);
\draw  (v2) edge (v12);
\draw  (v2) edge (v11);
        \end{tikzpicture}
        \caption{\color{orange}{2}}
        \label{fig:color of H1, in second part, crossing}
    \end{subfigure}
    \hfill 
    \begin{subfigure}[b]{0.2\textwidth}
        \centering
        \begin{tikzpicture}[scale=0.4]
        \node [circle, fill=blue, inner sep=0pt, minimum size=4pt] (v1) at (-0.5,0.5) {};
\node [circle, fill=blue, inner sep=0pt, minimum size=4pt] (v3) at (-0.5,-0.5) {};
\node [circle, fill=black, inner sep=0pt, minimum size=4pt] (v4) at (1.5,0.5) {};
\node [circle, fill=black, inner sep=0pt, minimum size=4pt] (v2) at (1.5,-0.5) {};
\draw  (v1) edge (v2);
\draw  (v3) edge (v4);
\draw  (v3) edge (v2);
\draw  (v1) edge (v4);
\node [circle, fill=black, inner sep=0pt, minimum size=4pt] (v5) at (-3,-1.5) {};
\node [circle, fill=black, inner sep=0pt, minimum size=4pt] (v6) at (-3,-2.5) {};
\node [circle, fill=blue, inner sep=0pt, minimum size=4pt] (v12) at (4,-1.5) {};
\node [circle, fill=blue, inner sep=0pt, minimum size=4pt] (v11) at (4,-2.5) {};
\node [circle, fill=black, inner sep=0pt, minimum size=4pt] (v8) at (-0.5,-3.5) {};
\node [circle, fill=black, inner sep=0pt, minimum size=4pt] (v7) at (-0.5,-4.5) {};
\node [circle, fill=black, inner sep=0pt, minimum size=4pt] (v10) at (1.5,-3.5) {};
\node [circle, fill=black, inner sep=0pt, minimum size=4pt] (v9) at (1.5,-4.5) {};
\draw  (v1) edge (v5);
\draw  (v1) edge (v6);
\draw  (v3) edge (v5);
\draw  (v3) edge (v6);
\draw  (v6) edge (v7);
\draw  (v6) edge (v8);
\draw  (v5) edge (v8);
\draw  (v5) edge (v7);
\draw  (v7) edge (v9);
\draw  (v7) edge (v10);
\draw  (v8) edge (v10);
\draw  (v8) edge (v9);
\draw  (v9) edge (v11);
\draw  (v9) edge (v12);
\draw  (v10) edge (v11);
\draw  (v10) edge (v12);
\draw  (v12) edge (v4);
\draw  (v4) edge (v11);
\draw  (v2) edge (v12);
\draw  (v2) edge (v11);
        \end{tikzpicture}
        \caption{\color{blue}{3}}
        \label{fig:color of H1, in second part, two vertex1}
    \end{subfigure}
    \hfill 
    \begin{subfigure}[b]{0.2\textwidth}
        \centering
        \begin{tikzpicture}[scale=0.4]
        \node [circle, fill=blue, inner sep=0pt, minimum size=4pt] (v1) at (-0.5,0.5) {};
\node [circle, fill=blue, inner sep=0pt, minimum size=4pt] (v3) at (-0.5,-0.5) {};
\node [circle, fill=black, inner sep=0pt, minimum size=4pt] (v4) at (1.5,0.5) {};
\node [circle, fill=black, inner sep=0pt, minimum size=4pt] (v2) at (1.5,-0.5) {};
\draw  (v1) edge (v2);
\draw  (v3) edge (v4);
\draw  (v3) edge (v2);
\draw  (v1) edge (v4);
\node [circle, fill=black, inner sep=0pt, minimum size=4pt] (v5) at (-3,-1.5) {};
\node [circle, fill=black, inner sep=0pt, minimum size=4pt] (v6) at (-3,-2.5) {};
\node [circle, fill=black, inner sep=0pt, minimum size=4pt] (v12) at (4,-1.5) {};
\node [circle, fill=black, inner sep=0pt, minimum size=4pt] (v11) at (4,-2.5) {};
\node [circle, fill=blue, inner sep=0pt, minimum size=4pt] (v8) at (-0.5,-3.5) {};
\node [circle, fill=blue, inner sep=0pt, minimum size=4pt] (v7) at (-0.5,-4.5) {};
\node [circle, fill=black, inner sep=0pt, minimum size=4pt] (v10) at (1.5,-3.5) {};
\node [circle, fill=black, inner sep=0pt, minimum size=4pt] (v9) at (1.5,-4.5) {};
\draw  (v1) edge (v5);
\draw  (v1) edge (v6);
\draw  (v3) edge (v5);
\draw  (v3) edge (v6);
\draw  (v6) edge (v7);
\draw  (v6) edge (v8);
\draw  (v5) edge (v8);
\draw  (v5) edge (v7);
\draw  (v7) edge (v9);
\draw  (v7) edge (v10);
\draw  (v8) edge (v10);
\draw  (v8) edge (v9);
\draw  (v9) edge (v11);
\draw  (v9) edge (v12);
\draw  (v10) edge (v11);
\draw  (v10) edge (v12);
\draw  (v12) edge (v4);
\draw  (v4) edge (v11);
\draw  (v2) edge (v12);
\draw  (v2) edge (v11);
        \end{tikzpicture}
        \caption{\color{orange}{2}}
        \label{fig:color of H1, in second part, two vertex2}
    \end{subfigure}
    \hfill 
    \begin{subfigure}[b]{0.2\textwidth}
        \centering
        \begin{tikzpicture}[scale=0.4]
        \node [circle, fill=black, inner sep=0pt, minimum size=4pt] (v1) at (-0.5,0.5) {};
\node [circle, fill=black, inner sep=0pt, minimum size=4pt] (v3) at (-0.5,-0.5) {};
\node [circle, fill=black, inner sep=0pt, minimum size=4pt] (v4) at (1.5,0.5) {};
\node [circle, fill=black, inner sep=0pt, minimum size=4pt] (v2) at (1.5,-0.5) {};
\draw  (v1) edge (v2);
\draw  (v3) edge (v4);
\draw  (v3) edge (v2);
\draw  (v1) edge (v4);
\node [circle, fill=black, inner sep=0pt, minimum size=4pt] (v5) at (-3,-1.5) {};
\node [circle, fill=black, inner sep=0pt, minimum size=4pt] (v6) at (-3,-2.5) {};
\node [circle, fill=blue, inner sep=0pt, minimum size=4pt] (v12) at (4,-1.5) {};
\node [circle, fill=blue, inner sep=0pt, minimum size=4pt] (v11) at (4,-2.5) {};
\node [circle, fill=blue, inner sep=0pt, minimum size=4pt] (v8) at (-0.5,-3.5) {};
\node [circle, fill=blue, inner sep=0pt, minimum size=4pt] (v7) at (-0.5,-4.5) {};
\node [circle, fill=black, inner sep=0pt, minimum size=4pt] (v10) at (1.5,-3.5) {};
\node [circle, fill=black, inner sep=0pt, minimum size=4pt] (v9) at (1.5,-4.5) {};
\draw  (v1) edge (v5);
\draw  (v1) edge (v6);
\draw  (v3) edge (v5);
\draw  (v3) edge (v6);
\draw  (v6) edge (v7);
\draw  (v6) edge (v8);
\draw  (v5) edge (v8);
\draw  (v5) edge (v7);
\draw  (v7) edge (v9);
\draw  (v7) edge (v10);
\draw  (v8) edge (v10);
\draw  (v8) edge (v9);
\draw  (v9) edge (v11);
\draw  (v9) edge (v12);
\draw  (v10) edge (v11);
\draw  (v10) edge (v12);
\draw  (v12) edge (v4);
\draw  (v4) edge (v11);
\draw  (v2) edge (v12);
\draw  (v2) edge (v11);
        \end{tikzpicture}
        \caption{\color{orange}{2}}
        \label{fig:color of H1, in second part, two vertex3}
    \end{subfigure}
    \hfill 
    
    \caption{The $4$-independent sets in $H_1$; colored vertices indicate the sets}
    \label{fig:4-independent sets in H1}
\end{figure}
In Figure~\ref{fig:color of H1, in first part}, the blue vertices form one part $A$ of the bipartite graph $H_{1}$, while in Figure~\ref{fig:color of H1, in second part, crossing}, the red vertices form the other part $B$. The blue vertices in Figure~\ref{fig:color of H1, in first part} indicate all independent sets of order $4$ contained in $A$. The red vertices in Figure~\ref{fig:color of H1, in second part, crossing} indicate those contained in $B$ that contain exactly one pair of twins, while Figures~\ref{fig:color of H1, in second part, two vertex1}--\ref{fig:color of H1, in second part, two vertex3} indicate those containing exactly two pairs of twins. If an independent set $I$ of order $4$ is contained in neither $A$ nor $B$, then it is one of the sets shown in Figures~\ref{fig:color of H1, diagonal1}--\ref{fig:color of H1, diagonal3}. These eight displayed families cover all indexed occurrence-neighborhoods. Assign each vertex of $G$ to one eligible family, breaking ties by the order in the list below, and color the resulting vertex classes as follows.
\begin{enumerate}
    \item Color the vertices whose indexed occurrence-neighborhood is the $4$-independent set in Figure \ref{fig:color of H1, in first part} by {\color{red}{1}}.
    \item Color the vertices whose indexed occurrence-neighborhood is a $4$-independent set in one of Figures \ref{fig:color of H1, diagonal1}, \ref{fig:color of H1, in second part, crossing}, \ref{fig:color of H1, in second part, two vertex2}, \ref{fig:color of H1, in second part, two vertex3} by {\color{orange}{2}}.
    \item Color the vertices whose indexed occurrence-neighborhood is a $4$-independent set in one of Figures \ref{fig:color of H1, diagonal2}, \ref{fig:color of H1, in second part, two vertex1} by {\color{blue}{3}}.
    \item Color the vertices whose indexed occurrence-neighborhood is the $4$-independent set in Figure \ref{fig:color of H1, diagonal3} by {\color{green}{4}}.
\end{enumerate}
Any two vertices assigned the same color have intersecting indexed occurrence-neighborhoods and hence a common neighbor among the vertices $x_i$. Thus $G$ is $4$-colorable.

Assume that $H=H_{2}$. Figure~\ref{fig:color of H2} similarly lists all $4$-independent sets. In each subfigure, the highlighted vertices encode the corresponding family. Figures~\ref{fig:color of H2-f}--\ref{fig:color of H2-i} each highlight five vertices: Figure~\ref{fig:color of H2-f} uses four blue vertices and one red vertex, whereas Figures~\ref{fig:color of H2-g}--\ref{fig:color of H2-i} use five blue vertices. In Figure~\ref{fig:color of H2-f}, the red vertex distinguishes the $4$-independent subsets that contain it; Figure~\ref{fig:color of H2-m} represents the corresponding subsets that omit it.
Observe that the $4$-independent sets represented by Figures \ref{fig:color of H2-j}, \ref{fig:color of H2-k}, \ref{fig:color of H2-l}, and \ref{fig:color of H2-m} are respectively contained in the $5$-vertex sets represented by Figures \ref{fig:color of H2-h}, \ref{fig:color of H2-g}, \ref{fig:color of H2-i}, and \ref{fig:color of H2-f}.
We now show that Figures \ref{fig:color of H2-a}--\ref{fig:color of H2-i} together with Figure \ref{fig:color of H2-m} give exactly all $4$-independent sets of $H$. We partition the $4$-independent sets according to the size of their intersection with the four blue vertices shown in Figure \ref{fig:color of H2-a}; denote this set of four blue vertices by $I_{0}$. If a $4$-independent set intersects $I_{0}$ in at least three vertices, then it must be $I_{0}$ itself. If it intersects $I_{0}$ in exactly two vertices, then it is one of the independent sets depicted in Figures \ref{fig:color of H2-b}, \ref{fig:color of H2-c}, \ref{fig:color of H2-d}, and \ref{fig:color of H2-e}. If it intersects $I_{0}$ in exactly one vertex, then it must be among the independent sets contained in the $5$-vertex sets of Figures \ref{fig:color of H2-f}, \ref{fig:color of H2-g}, \ref{fig:color of H2-h}, and \ref{fig:color of H2-i}. If it is disjoint from $I_{0}$, then $H\setminus I_{0}$ consists of two vertex-disjoint $C_{4}$'s, which yields exactly the four possibilities shown in Figures \ref{fig:color of H2-j}, \ref{fig:color of H2-k}, \ref{fig:color of H2-l}, and \ref{fig:color of H2-m}, all of which are already contained in the aforementioned $5$-vertex sets. This completes the characterization of all $4$-independent sets in $H$.
Recall that Figure \ref{fig:color of H2-f} represents those $4$-independent sets in its $5$-vertex set that contain the red vertex, while Figure \ref{fig:color of H2-m} represents those in the same $5$-vertex set that do not contain the red vertex. Consequently, Figures \ref{fig:color of H2-a}--\ref{fig:color of H2-i} together with Figure \ref{fig:color of H2-m} indeed give all $4$-independent sets of $H$.
Thus the ten displayed families cover all indexed occurrence-neighborhoods. Assign each vertex of $G$ to one eligible family, breaking ties by the order in the coloring list below.
\begin{figure}[!htbp]
    \centering
    \begin{subfigure}[b]{0.18\textwidth}
        \centering

        \caption{\color{red}{1}}
        \label{fig:color of H2-a}
    \end{subfigure}
    \hfill 
    \begin{subfigure}[b]{0.18\textwidth}
        \centering
        %
        \caption{\color{red}{1}}
        \label{fig:color of H2-b}
    \end{subfigure}
\hfill
\begin{subfigure}[b]{0.18\textwidth}
        \centering
        %
        \caption{\color{red}{1}}
        \label{fig:color of H2-c}
    \end{subfigure}
    \hfill 
    \begin{subfigure}[b]{0.18\textwidth}
        \centering
        %
        \caption{\color{orange}{2}}
        \label{fig:color of H2-d}
    \end{subfigure}
\hfill
\begin{subfigure}[b]{0.18\textwidth}
        \centering
        %
        \caption{\color{blue}{3}}
        \label{fig:color of H2-e}
    \end{subfigure}
\hfill
    \begin{subfigure}[b]{0.222\textwidth}
        \centering
        %
        \caption{\color{red}{1}}
        \label{fig:color of H2-f}
    \end{subfigure}
\hfill
\begin{subfigure}[b]{0.222\textwidth}
        \centering
        %
        \caption{\color{orange}{2}}
        \label{fig:color of H2-g}
    \end{subfigure}
    \hfill 
    \begin{subfigure}[b]{0.222\textwidth}
        \centering
        %
        \caption{\color{blue}{3}}
        \label{fig:color of H2-h}
    \end{subfigure}
\hfill
\begin{subfigure}[b]{0.222\textwidth}
        \centering
        %
        \caption{\color{green}{4}}
        \label{fig:color of H2-i}
    \end{subfigure}
    \hfill 
    \begin{subfigure}[b]{0.222\textwidth}
        \centering
        %
        \caption{ }
        \label{fig:color of H2-j}
    \end{subfigure}
\hfill
\begin{subfigure}[b]{0.222\textwidth}
        \centering
        %
        \caption{ }
        \label{fig:color of H2-k}
    \end{subfigure}
    \hfill 
    \begin{subfigure}[b]{0.222\textwidth}
        \centering
        %
        \caption{ }
        \label{fig:color of H2-l}
    \end{subfigure}
\hfill 
    \begin{subfigure}[b]{0.222\textwidth}
        \centering
        %
        \caption{\color{green}{4}}
        \label{fig:color of H2-m}
    \end{subfigure}
    \caption{The $4$-independent sets in $H_2$}
    \label{fig:color of H2}
\end{figure}
Color the corresponding vertex classes as follows.
\begin{enumerate}
    \item Color the vertices whose indexed occurrence-neighborhood is a $4$-independent set in one of Figures \ref{fig:color of H2-a}, \ref{fig:color of H2-b}, \ref{fig:color of H2-c}, \ref{fig:color of H2-f} by {\color{red}{1}}.
    \item Color the vertices whose indexed occurrence-neighborhood is a $4$-independent set in one of Figures \ref{fig:color of H2-d}, \ref{fig:color of H2-g} by {\color{orange}{2}}.
    \item Color the vertices whose indexed occurrence-neighborhood is a $4$-independent set in one of Figures \ref{fig:color of H2-e}, \ref{fig:color of H2-h} by {\color{blue}{3}}.
    \item Color the vertices whose indexed occurrence-neighborhood is a $4$-independent set in one of Figures \ref{fig:color of H2-i}, \ref{fig:color of H2-m} by {\color{green}{4}}.
\end{enumerate}
Any two vertices assigned the same color have intersecting indexed occurrence-neighborhoods and hence a common neighbor among the vertices $x_i$. Thus $G$ is $4$-colorable.

Assume that $H=H_{4}$. We first show that every $4$-independent set of $H$ is contained in one of the sets indicated by blue vertices in Figure~\ref{fig:color of H4}. Draw $H$ as in Figure~\ref{fig:H4}. The top vertex $u$ and the bottom vertex $v$ are twins, so every maximal independent set contains either both or neither. There are only three maximal independent sets containing both $u$ and $v$, as shown in Figures~\ref{fig:color of H4-a}, \ref{fig:color of H4-b}, and~\ref{fig:color of H4-c}.
Next we consider maximal independent sets of size at least $4$ that contain neither $u$ nor $v$. Let $N_{H}(u)$ denote the set of neighbors of $u$ in $H$. Since $V(H)\setminus (\{u,v\}\cup N_{H}(u))$ contains a $C_{6}$ and its independence number is at most $3$, any maximal independent set of size at least $4$ containing none of $\{u,v\}$ must intersect $N_{H}(u)$ in at least one vertex. The unique maximal independent set that contains all of $N_{H}(u)$ is $N_{H}(u)$ itself, as shown in Figure \ref{fig:color of H4-d}.
If a maximal independent set of size at least $4$ intersects $N_{H}(u)$ in exactly three vertices, then its remaining vertex is also uniquely determined, as shown in Figures \ref{fig:color of H4-e}, \ref{fig:color of H4-f}, \ref{fig:color of H4-g}, and \ref{fig:color of H4-h}.
If a maximal independent set of size at least $4$ intersects $N_{H}(u)$ in exactly two vertices, then one can easily verify that these two vertices, together with the corresponding maximal independent sets, must be as depicted in Figures \ref{fig:color of H4-i} and \ref{fig:color of H4-j}.
Finally, one can check that any independent set of size at least $4$ intersecting $N_{H}(u)$ in exactly one vertex is contained in one of the configurations shown in Figures~\ref{fig:color of H4-i} and~\ref{fig:color of H4-j}. This completes the characterization of all $4$-independent sets of $H_{4}$. The ten displayed families therefore cover all indexed occurrence-neighborhoods. Assign each vertex of $G$ to one eligible family, breaking ties by the order in the coloring list below.
Then
\begin{enumerate}
    \item Color the vertices whose indexed occurrence-neighborhood is a $4$-independent set in one of Figures \ref{fig:color of H4-a}, \ref{fig:color of H4-c} by {\color{red}{1}}.
    \item Color the vertices whose indexed occurrence-neighborhood is the $4$-independent set in Figure \ref{fig:color of H4-b} by {\color{orange}{2}}.
    \item Color the vertices whose indexed occurrence-neighborhood is a $4$-independent set in one of Figures \ref{fig:color of H4-d}, \ref{fig:color of H4-f}, \ref{fig:color of H4-g}, \ref{fig:color of H4-i} by {\color{blue}{3}}.
    \item Color the vertices whose indexed occurrence-neighborhood is a $4$-independent set in one of Figures \ref{fig:color of H4-e}, \ref{fig:color of H4-h}, \ref{fig:color of H4-j} by {\color{green}{4}}.
\end{enumerate}
Any two vertices assigned the same color have intersecting indexed occurrence-neighborhoods and hence a common neighbor among the vertices $x_i$. Thus $G$ is $4$-colorable. This completes the proof for $K_3$-free graphs.
\begin{figure}[!htbp]
    \centering
    \begin{subfigure}[b]{0.222\textwidth}
        \centering

        \caption{\color{red}{1}}
        \label{fig:color of H4-a}
    \end{subfigure}
    \hfill 
    \begin{subfigure}[b]{0.222\textwidth}
        \centering
        %
        \caption{\color{orange}{2}}
        \label{fig:color of H4-b}
    \end{subfigure}
\hfill
\begin{subfigure}[b]{0.222\textwidth}
        \centering
        %
        \caption{\color{red}{1}}
        \label{fig:color of H4-c}
    \end{subfigure}
    \hfill 
    \begin{subfigure}[b]{0.222\textwidth}
        \centering
        %
        \caption{\color{blue}{3}}
        \label{fig:color of H4-d}
    \end{subfigure}
\hfill
\begin{subfigure}[b]{0.222\textwidth}
        \centering
        %
        \caption{\color{green}{4}}
        \label{fig:color of H4-e}
    \end{subfigure}
\hfill
    \vspace*{0.5cm}
    \begin{subfigure}[b]{0.222\textwidth}
        \centering
        %
        \caption{\color{blue}{3}}
        \label{fig:color of H4-f}
    \end{subfigure}
\hfill
\begin{subfigure}[b]{0.222\textwidth}
        \centering
        %
        \caption{\color{blue}{3}}
        \label{fig:color of H4-g}
    \end{subfigure}
    \hfill 
    \begin{subfigure}[b]{0.222\textwidth}
        \centering
        %
        \caption{\color{green}{4}}
        \label{fig:color of H4-h}
\end{subfigure}
\hfill\\
\begin{subfigure}[b]{0.222\textwidth}
    \mbox{}
\end{subfigure}
\hfill
\begin{subfigure}[b]{0.222\textwidth}
        \centering
        %
        \caption{\color{blue}{3}}
        \label{fig:color of H4-i}
    \end{subfigure}
    \hfill 
    \begin{subfigure}[b]{0.222\textwidth}
        \centering
        %
        \caption{\color{green}{4}}
        \label{fig:color of H4-j}
    \end{subfigure}
    \hfill
    \begin{subfigure}[b]{0.222\textwidth}
        \mbox{}
    \end{subfigure}

    \caption{The $4$-independent sets in $H_4$}
    \label{fig:color of H4}
\end{figure}

\subsection{Extension to $K_r$-free graphs}
The following theorem of Brandt \cite{2001-brandt} relates $K_{r-1}$-free graphs to maximal $K_{r}$-free graphs with large minimum degree; see also \cite{2011-lyle}. The \emph{join} of two vertex-disjoint graphs $G_1$ and $G_2$ is the graph $G$ with vertex set $V(G_1)\cup V(G_2)$ and edge set $E(G_1)\cup E(G_2)\cup\{uv:u\in V(G_1),\ v\in V(G_2)\}$.
\begin{theorem}[\cite{2001-brandt,2011-lyle}]\label{thm-decomposition Kr-free graph}
Let $r\geq4$ and $G$ be an $n$-vertex maximal $K_r$-free graph with minimum degree $\delta(G)\geq \frac{2r-5}{2r-3}n$. Then $G$ is the join of an independent set and a $K_{r-1}$-free graph.
\end{theorem}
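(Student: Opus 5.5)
Since this theorem is quoted from Brandt~\cite{2001-brandt} (see also~\cite{2011-lyle}), we will simply invoke it later. If we had to prove it ourselves, we would first reformulate the conclusion. $G$ is the join of an independent set $I\neq\emptyset$ and a $K_{r-1}$-free graph exactly when some vertex $x$ has the following property: its non-neighbourhood $M:=V(G)\setminus N(x)$, which contains $x$, is independent, and every vertex of $M$ is complete to $N(x)$. Given such an $x$, set $I=M$; then $G[N(x)]$ is automatically $K_{r-1}$-free, since any $K_{r-1}$ in $N(x)$ together with $x$ would form a $K_r$. So the task is to produce one vertex $x$ with this ``universal twin class'' property.

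The counting tool is the degree sum over a clique. By maximality $G$ contains a $K_{r-1}$, say $Q$, and every vertex has at most $r-2$ neighbours in $Q$. Hence
\[
(r-1)\frac{2r-5}{2r-3}n\le\sum_{q\in Q}d(q)\le (r-2)n .
\]
The slack is exactly $\frac{n}{2r-3}$, because $(r-2)(2r-3)-(r-1)(2r-5)=1$. Consequently all but at most $\frac{n}{2r-3}$ vertices have exactly $r-2$ neighbours in $Q$, and the degrees of the vertices of $Q$ are all close to $\delta(G)$. Let $V_i$ be the set of vertices missing exactly $q_i$ in $Q$. Since the $V_i$ are disjoint and $|V_i|\le n-d(q_i)\le\frac{2n}{2r-3}$, the sets $V_1,\dots,V_{r-1}$ and the small remainder $R$ are forced to have sizes close to $\frac{2n}{2r-3}$ and at most $\frac{n}{2r-3}$ respectively. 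We would next use maximality in the form ``two nonadjacent vertices have a $K_{r-2}$ in their common neighbourhood''. Combined with the minimum degree, this should show that each $V_i$ is independent, that $V_i$ is complete to $V_j$ for $i\neq j$ up to the vertices of $R$, and that the vertices of $R$ attach in a rigid way. This recovers a blow-up of a $K_{r-1}$ (one class split as $V_i\cup R$) in which a single class plays the role of $I$.

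The main obstacle is the exact extremal step. With non-strict degree bound $\frac{2r-5}{2r-3}n$ the slack is tight, so every inequality above must be analysed in its equality case, and one must show that the rigid pieces glue into a genuine join rather than, for example, a blow-up of a $C_5$-like configuration. The extremal example at this threshold is the join of $K_{r-3}$-type parts with a blown-up $C_5$, and we expect the distinction from that example to be exactly where the difficulty lies. To handle it, we would first try induction on $r$. For $r\ge5$, choose a vertex $v$ of minimum degree and apply the statement for $r-1$ inside $G[N(v)]$, which is $K_{r-1}$-free. Maximality of $G$ and the degree bound transfer to $G[N(v)]$ with the right ratio $\frac{2r-7}{2r-5}$, after checking that degrees inside $N(v)$ lose at most $n-d(v)$. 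The base case $r=4$, with $\delta\ge\frac35 n$, we would treat directly: the neighbourhood of each vertex is a triangle-free graph on at least $\frac35 n$ vertices, and each of its vertices has relative degree at least $\frac{n}{5}$, i.e. a third of the neighbourhood. This is where the Brandt--Thomass\'e/Andr\'asfai structure at density $\frac13$ would be invoked.
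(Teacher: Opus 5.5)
Relying on the citation is exactly what the paper does: it gives no proof of this theorem and simply quotes Brandt and Goddard--Lyle. The self-contained route you sketch, however, would not work, and several of its claims are false.

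\emph{The degree-sum count.} Your sum over a copy $Q$ of $K_{r-1}$ has slack $\frac{n}{2r-3}$. That is a constant fraction of $n$, so the inequality is far from tight. It bounds $|R|$, but it forces neither $|V_i|\approx\frac{2n}{2r-3}$ nor any completeness between different $V_i$. For $r=4$, take the join of an independent set $I$ of order $\frac{n}{3}$ with a balanced blow-up of $C_5$ whose parts $P_1,\dots,P_5$ have order $\frac{2n}{15}$. This graph is maximal $K_4$-free with $\delta(G)=\frac{3}{5}n$. For $Q=\{x,p_1,p_2\}$ with $x\in I$ and $p_i\in P_i$, you get $V_x=I$, $V_{p_1}=P_1\cup P_3$, $V_{p_2}=P_2\cup P_5$ and $R=P_4$. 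These have orders $\frac{5n}{15},\frac{4n}{15},\frac{4n}{15},\frac{2n}{15}$, not $\frac{6n}{15}$, and $P_3$ is anticomplete to $P_5$.

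\emph{The target structure.} More fundamentally, the conclusion permits an arbitrary $K_{r-1}$-free factor. For $r=4$ this includes a blow-up of a Vega graph, giving a $5$-chromatic $G$. So no argument can recover an $(r-1)$-colourable blow-up of $K_{r-1}$.

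\emph{The extremal example.} The graph you call extremal, a blow-up of $K_{r-3}\vee C_5$, is itself a join with an independent set (any class of the $K_{r-3}$ part). Its minimum-degree ratio is $\frac{3r-7}{3r-4}>\frac{2r-5}{2r-3}$. It is the Andr\'asfai--Erd\H{o}s--S\'os example, not one for this theorem.

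\emph{The induction through $G[N(v)]$.} Maximality need not pass to $G[N(v)]$: a non-edge $xy$ there gives a $K_{r-2}$ in $N(x)\cap N(y)$, not a $K_{r-3}$ inside $N(x)\cap N(y)\cap N(v)$. Nothing in the sketch explains how a join structure on $G[N(v)]$ would locate the independent factor of $G$. Finally, for $r=4$ at $\delta=\frac{3}{5}n$, the neighbourhood of a minimum-degree vertex is only guaranteed relative minimum degree $\frac{1}{3}$. Brandt--Thomass\'e needs strict inequality, so it is unavailable there.

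\emph{The missing idea} is to compare a maximum independent set with another independent set. Let $I$ be a maximum independent set and $u\in I$. By maximality of $G$ it suffices to show that $V\setminus N(u)$ is independent. Indeed, then every $x\notin N(u)$ has $N(x)=N(u)$: a non-neighbour $y\in N(u)$ of $x$ would give a $K_{r-2}$ in $N(x)\cap N(y)\subseteq N(u)$, which together with $u$ and $y$ forms a $K_r$.

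Suppose instead that $V\setminus N(u)$ contains an edge. Then it contains one, $s_1s_2$, with $s_1\in I$, because an endpoint outside $I$ has a neighbour in $I\subseteq V\setminus N(u)$. Extend $s_1s_2$ greedily to a $K_{r-2}$ called $S$, and let $I'=N(u)\cap\bigcap_{s\in S}N(s)$, which is independent. Now bound the sets $N(s)\setminus N(u)$:
\begin{itemize}
\item $|N(s_1)\setminus N(u)|\le n-d(u)-|I|$, since $N(s_1)$ misses $I$.
\item $|N(s_2)\setminus N(u)|\le n-d(u)-\bigl((r-2)\delta-(r-3)n\bigr)$. To see this, take a $K_{r-2}$ called $T$ inside $N(u)\cap N(s_2)$; its common neighbourhood is independent, has order at least $(r-2)\delta-(r-3)n$, and contains $u$ and $s_2$.
\item $|N(s)\setminus N(u)|\le n-d(u)$ for every other $s\in S$.
\end{itemize}
Inclusion--exclusion inside $N(u)$ then gives $|I'|\ge(2r-3)\delta-(2r-5)n+|I|$. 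This is the inequality in which $\frac{2r-5}{2r-3}$ is sharp. Its consequence, that $V\setminus N(u)$ is independent, is the $k=2$ case of the Goddard--Lyle theorem quoted later in the paper (stated there with strict inequality). The same computation appears in the paper's proof of its perturbed decomposition theorem.

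Note that this contradicts the maximality of $|I|$ only when $\delta>\frac{2r-5}{2r-3}n$. At $\delta=\frac{2r-5}{2r-3}n$ it yields merely $|I'|\ge|I|$, so the non-strict boundary case in the statement needs a separate equality analysis, or the exact form of the cited result. Your sense that equality is delicate is right, but the delicacy sits in this comparison, not in the degree sum over $Q$.
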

\begin{proof}[Continuation of the proof of Theorem~\ref{thm-main thm}]
Suppose that $s\geq3$ and that every $K_s$-free graph $F$ with minimum degree at least $\frac{2s-5}{2s-3}|V(F)|$ is $(s+1)$-colorable. It suffices to prove that every $n$-vertex $K_{s+1}$-free graph with minimum degree at least $\frac{2s-3}{2s-1}n$ is $(s+2)$-colorable. After adding edges if necessary, let $G$ be a maximal $K_{s+1}$-free graph satisfying this degree condition. By Theorem~\ref{thm-decomposition Kr-free graph} with $r=s+1$, the graph $G$ is the join of an independent set $I$ and a $K_s$-free graph $T$. The degree condition gives $|T|\geq\delta(G)\geq \frac{2s-3}{2s-1}n$, and, for every $w\in T$,
\[
d_{T}(w)\geq \delta(G)-(n-|T|)\geq \frac{2s-5}{2s-3}|T|.
\]
By the induction hypothesis, $G[T]$ is $(s+1)$-colorable, and hence $G$ is $(s+2)$-colorable. The preceding subsection provides the base case $s=3$, completing the proof of Theorem~\ref{thm-main thm}.
\end{proof}

\section{Connections with subgraphs of Kneser graphs}\label{section-connection between kneser graph}
\subsection{Bounding the chromatic number}
In this section, we prove Theorems~\ref{thm-lower bound on chi of kneser subgraph imply k3 free graph} and~\ref{thm-sufficiency}. The first construction is motivated by Hajnal's construction.

The following theorem converts an $f(n)$-vertex subgraph of $KG(n,f(n))$ with large chromatic number into a triangle-free graph $G$ with minimum degree at least $\frac{1}{3}|V(G)|-\frac{2}{3}f(n)$ and equally large chromatic number.

\begin{theorem}\label{thm-lower bound on chi of kneser subgraph imply k3 free graph}
Let $f$ be a positive integer-valued function satisfying $f(n)=o(n)$. If $KG(n,f(n))$ has an $f(n)$-vertex subgraph $H$ with chromatic number at least $t$, then there is a $(3n+2f(n))$-vertex $K_3$-free graph $G$ with minimum degree at least $n=\frac{1}{3}|V(G)|-\frac{2}{3}f(n)$ and chromatic number at least $t$.
\end{theorem}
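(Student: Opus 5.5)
The plan is to imitate Hajnal's construction, but to replace the blown-up ground set by a single copy of it, so that the Kneser part needs only the $f(n)$ vertices of $H$ itself. Write $f=f(n)$ and recall that the vertices of $H$ are $n$-subsets of $[2n+f]$, with adjacency meaning disjointness. I would build $G$ on three vertex classes, checking the order, the minimum degree, triangle-freeness and the chromatic number in that order.
\begin{itemize}
\item The first class is $V(H)$ itself, keeping the edges of $H$.
\item The second class $I_1$ is an independent set whose vertices are identified with the $2n+f$ elements of the ground set $[2n+f]$.
\item The third class $I_2$ is an independent set of $n$ vertices.
\end{itemize}
Every vertex of $I_1$ is joined to every vertex of $I_2$. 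A vertex $x\in V(H)$ is joined to the element $i\in I_1$ exactly when $i\in x$. There are no other edges; in particular $V(H)$ has no edges to $I_2$. Then $|V(G)|=f+(2n+f)+n=3n+2f$, so $n=\frac13|V(G)|-\frac23 f$, as required.

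\emph{Minimum degree.} A vertex $x\in V(H)$ has exactly $|x|=n$ neighbours in $I_1$. A vertex of $I_1$ is adjacent to all of $I_2$, so it has at least $n$ neighbours. A vertex of $I_2$ is adjacent to all of $I_1$, so it has $2n+f\ge n$ neighbours. Hence $\delta(G)\ge n$.

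\emph{Triangle-freeness.} Since $I_1$ and $I_2$ are independent and there are no edges between $V(H)$ and $I_2$, a triangle must have one of the following shapes.
\begin{itemize}
\item It lies inside $H$. Then it consists of three pairwise disjoint $n$-subsets of $[2n+f]$, which needs $3n\le 2n+f$, i.e.\ $f\ge n$.
\item It consists of two adjacent vertices $x,y\in V(H)$ and one $i\in I_1$. Then $i\in x\cap y$, contradicting $x\cap y=\emptyset$.
\item It uses a vertex of $I_2$. Its other two vertices must both be neighbours of that vertex, hence both lie in $I_1$, which is independent. So this is impossible.
\end{itemize}
Thus the only point requiring care is the inequality $f<n$. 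This is where the hypothesis $f(n)=o(n)$ enters: for all sufficiently large $n$ we have $f(n)<n$, so $KG(n,f(n))$, and hence $H$, is triangle-free. I would state the theorem as applying for such $n$, or note that for the finitely many small $n$ with $f(n)\ge n$ the statement is read with $n$ large, as everywhere else in the paper.

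\emph{Chromatic number.} $G$ contains $H$ as an induced subgraph, so $\chi(G)\ge\chi(H)\ge t$. There is no genuine obstacle in this argument. The only step needing attention is the triangle check for $H$ itself, i.e.\ ensuring $f(n)<n$, together with the bookkeeping that the vertex count is exactly $3n+2f$. Because each ground element is used once rather than blown up, the $I_1$-vertices have degree only $n$, coming from $I_2$, which is still enough.
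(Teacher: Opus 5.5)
Your construction is the paper's: $V(H)$ with its edges; an independent set $I_1$ identified with the ground set $[2n+f]$, where each $x\in V(H)$ is joined to the elements it contains; and an independent set $I_2$ of order $n$ complete to $I_1$. Your order, degree, triangle and chromatic-number checks are also the same. Your remark that $H$ itself is triangle-free only when $f(n)<n$, so that the statement should be read for sufficiently large $n$, makes explicit a point the paper's one-line claim ``the resulting graph is triangle-free'' leaves unstated.
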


\begin{proof}
Let $f=f(n)$, and let $H$ be an $f$-vertex subgraph of $KG(n,f)$ with chromatic number at least $t$. We construct a $(3n+2f)$-vertex triangle-free graph $G$ with minimum degree at least $\frac{1}{3}|V(G)|-\frac{2}{3}f=n$ and chromatic number at least $t$. Partition $V(G)$ into $V(H)$ and independent sets $I_1,I_2$, where $|I_1|=2n+f$ and $|I_2|=n$. Include $E(H)$ and all edges between $I_1$ and $I_2$. Each vertex of $H$ corresponds to an $n$-subset of $[2n+f]$. Denote $I_1=\{x_1,\dots,x_{2n+f}\}$, and join $x_i$ to $w\in V(H)$ if and only if $i\in w$. The resulting graph is triangle-free and has order $3n+2f$. Every vertex of $I_1\cup V(H)$ has degree at least $n$, and every vertex of $I_2$ has degree $2n+f$. Thus $\delta(G)\geq n=|V(G)|/3-2f/3$, while $\chi(G)\geq\chi(H)\geq t$.
\end{proof}
This construction is a direct variant of Hajnal's construction.
In the latter, $KG(n,k)$ is a $\binom{2n+k}{n}$-vertex subgraph of $KG(\frac{2n\ell}{2n+k},\frac{2k\ell}{2n+k})$.
Theorem~\ref{thm-24000} shows that the large bipartite subgraph in this construction is necessary.
We next give a sufficient condition, in terms of Kneser graphs, for the chromatic number to be bounded.
\begin{theorem}\label{thm-sufficiency}
Let $f$ be a positive integer-valued function satisfying $f(n)=o(n)$, let $t\geq1$ be a fixed integer, and assume that $n$ is sufficiently large. Set $P_n=\lceil n/3-10f(n)\rceil$. If every $f(n)$-vertex subgraph of $KG(P_n,21f(n))$ is $t$-colorable, then every $n$-vertex $K_3$-free graph with minimum degree at least $\frac{1}{3}n-f(n)$ is $(t+10^{391})$-colorable.
\end{theorem}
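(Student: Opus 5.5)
The plan is to pass to a maximal triangle-free supergraph and split into a \emph{structured} case, which looks like Hajnal's construction, and an \emph{unstructured} case. The constant $10^{391}$ is reserved for the unstructured case and for the bipartite skeleton. The Kneser hypothesis is used only on a small exceptional set.

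\textbf{Setup.} Let $G$ be an $n$-vertex triangle-free graph with $\delta(G)\geq \frac n3-f$, where $f=f(n)$. Adding edges preserves the degree bound, and a coloring of a supergraph restricts to $G$, so we may assume $G$ is maximal triangle-free.

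\textbf{Unstructured case.} Here I would follow the Łuczak--Thomassé strategy: near the $\frac n3$ threshold, neighbourhoods in a triangle-free graph form a set system of bounded VC-dimension. An $\e$-net argument then gives a bounded number $c$ of vertices $x_1,\dots,x_c$ such that $N(x_1)\cup\dots\cup N(x_c)$ covers all but a controlled exceptional set. Each $N(x_i)$ is independent, so these neighbourhoods form $c$ colour classes. The bound $c\leq 10^{391}$ would come from making the VC-dimension and net-size estimates explicit. The argument either succeeds outright, or it fails because of a specific obstruction: many vertices whose neighbourhoods are nearly disjoint from one another and which behave like the sets $S_i$ in Hajnal's construction.

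\textbf{Structured case and the partition.} From that obstruction I would extract the partition $V(G)=I_1\cup I_2\cup A$ of Theorem~\ref{thm-24000}. Take $I_2$ to be a near-maximum common neighbourhood, and $I_1$ the set of vertices complete to $I_2$ up to $O(f)$ error. Degree counting should give $|I_1|\geq \frac23 n-O(f)$, $|I_2|\geq \frac n3-O(f)$, and hence $|A|=O(f)$.

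\textbf{Colouring $A$.} Each vertex $w\in A$ has no neighbour in $I_2$, since otherwise a triangle or a maximality argument moves $w$ into $I_1$. So $w$ has at least $\frac n3-f-|A|\geq P_n$ neighbours in $I_1$. I would fix a $P_n$-subset $\varphi(w)\subseteq N_{I_1}(w)$. By triangle-freeness, adjacent vertices of $A$ get disjoint subsets. If $|I_1|$ is trimmed to exactly $2P_n+21f$ (absorbing the $O(f)$ slack into the constant $21$), then $\varphi$ is a homomorphism from $G[A]$ into $KG(P_n,21f)$.

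I expect the \textbf{main obstacle} to be that $|A|$ is only $O(f)$, not at most $f$, while the hypothesis speaks only of $f$-vertex subgraphs. I would try to overcome it as follows. Vertices of $A$ with large degree into $I_1\cup A$ beyond the threshold can be absorbed into the bounded colouring or re-assigned to $I_1$. Tightening the degree accounting should then shrink the genuinely Kneser-like part of $A$ to at most $f$ vertices, which receive $t$ colours by hypothesis.

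\textbf{Assembling the colouring.} The sets $I_1$ and $I_2$ take two colours, and the remaining $O(1)$ classes are absorbed in $10^{391}$. The Kneser part takes $t$ colours, for a total of $t+10^{391}$.
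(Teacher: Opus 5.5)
\textbf{The central dichotomy is asserted, not proved.} Your ``unstructured case'' is where essentially all the work of the theorem lies, and you give no argument for it.

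The VC-dimension/$\varepsilon$-net method of \L uczak and Thomass\'e only gives $\chi(G)\leq\max\{10^5,12(a+1)\}$ when $\delta(G)\geq\frac n3-a$. That bound grows with the deficiency, so making its estimates explicit cannot yield a number $c\leq 10^{391}$ independent of $f$.

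You also give no mechanism by which failure of the net argument produces an independent set of order about $\frac n3$ to which about $\frac23 n$ vertices are almost complete. Such a set does not exist in, say, balanced blow-ups of $C_5$. So it must be derived from the failure of bounded colourings, and that derivation is exactly what is missing.

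The paper starts instead from the $\mathcal{D}_4$ alternative of \L uczak--Polcyn--Reiher. Either $G$ is a blow-up of an Andr\'asfai or Vega graph, or a $\mathcal{D}_4$-violating multiset of $3k\leq 12$ vertices has neighbourhoods covering a set $D$ of $n-3f$ vertices. In the second case only $R=V(G)\setminus D$, of order $3f$, needs further colours. A long chain of ``control'' colourings (via $I_1,I_2,C$, Claim~\ref{lem-color the vertex with mid neighbors in I}, and the sets $L,J,S$) then either colours $R$ with boundedly many colours or forces $|L|\geq\frac23 n-9f$. The vertices of $Y$ with $d_L(w)<\frac12|L|-5f$ get $6$ colours by a packing argument on disjoint $S$-neighbourhoods. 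Only the rest, $Y_{\mathrm{hi}}$, is sent to a Kneser graph.

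\textbf{Your colouring of $A$ fails as written.} The claim that $w\in A$ has no neighbour in $I_2$ is unjustified. A neighbour $y\in I_2$ only forbids edges from $w$ to $N(y)$, and the paper's proof of Theorem~\ref{thm-24000} obtains only $d_{I_2}(w)\leq 10f$. Moreover, $\frac n3-f-|A|\geq P_n$ essentially requires $|A|\leq 9f$, which $|A|=O(f)$ does not give.

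Nor can slack be ``absorbed into the constant $21$'', which is fixed by the hypothesis. The partition of Theorem~\ref{thm-24000} yields only a homomorphism into $KG(\frac n3-44f,97f)$. Since $\chi(KG(\frac n3-44f,97f))=97f+2>21f+2=\chi(KG(P_n,21f))$, there is no homomorphism from the former to the latter. The upper bound $|I_1|\leq n-\delta(G)\leq 2P_n+21f$ is automatic; the delicate part is the lower bound on degrees into $I_1$. That is why the paper maps only $Y_{\mathrm{hi}}$, using $p=\lceil\frac12|L|-5f\rceil\geq P_n$.

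\textbf{Shrinking to $f$ vertices by degree accounting cannot work.} The construction of Theorem~\ref{thm-lower bound on chi of kneser subgraph imply k3 free graph} has deficiency $\frac23 f'$ but an arbitrary Kneser subgraph on $f'$ vertices, so the Kneser-like part can genuinely exceed $f$.

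The missing idea is star deletion. All images are $p$-subsets of a ground set of order $2p+\kappa<3p$, so some ground element $x$ lies in more than a third of them. The vertices whose image contains $x$ share the neighbour $x$, hence are independent. Removing $\lceil\log_{3/2}C\rceil$ such classes leaves at most $f$ vertices, to which the hypothesis applies. This is Proposition~\ref{prop-property of kneser graph}(ii), which the paper uses with $C=3$ at a cost of $3$ colours.
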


\begin{proof}
Let $G$ be an $n$-vertex $K_3$-free graph with minimum degree at least $\frac{n}{3}-f$, where we write $f=f(n)$ and assume that $n$ is sufficiently large. By adding edges if necessary, we may assume that $G$ is maximal triangle-free. If $G$ satisfies $\D_4$, then Theorem~\ref{thm-D3 and D4 implies blow-up of Vega graph} implies that $G$ is a blow-up of either an Andr\'{a}sfai graph $\Gamma_i$ or a Vega graph, and hence is $4$-colorable. We may therefore assume that $G$ violates $\D_4$. Thus there are an integer $k\in\{1,2,3,4\}$ and a $3k$-element multiset $H=\{x_1,\dots,x_{3k}\}\subseteq V(G)$ such that every vertex of $G$ is adjacent to at most $k$ elements of $H$, counted with multiplicity.
We have $d(x_1)+\dots+d(x_{3k})\geq kn-3kf$, and hence $|N(x_1)\cup\dots\cup N(x_{3k})|\geq n-3f$. Choose an $(n-3f)$-subset $D\subseteq N(x_1)\cup\dots\cup N(x_{3k})$, and let $R=V(G)\setminus D$. The set $D$ can be colored with at most $3k\leq12$ colors, so it suffices to color $R$ with $t+(10^{391}-12)$ colors. For every $w\in V(G)$, we have $d_D(w)\geq\delta(G)-|R|\geq\frac{1}{3}n-4f$. Choose an edge $uv\in E(G)$ and disjoint independent sets $I_1\subseteq N(u)\cap D$ and $I_2\subseteq N(v)\cap D$ with $|I_1|=|I_2|=\frac{1}{3}n-4f$. Let $C=D\setminus(I_1\cup I_2)$. Then $|C|=n-3f-2(\frac{n}{3}-4f)=\frac{n}{3}+5f$. We say that $A\subseteq V(G)$ \emph{controls} a vertex $w$ if $d_A(w)>|A|/2$. The vertices controlled by a fixed set $A$ form an independent set and may therefore receive one color.


\begin{claim}\label{claim-adjacent to half of I1 I2 C}
At the cost of at most $3$ colors, we may assume that every vertex in $R$ has at most $\frac{1}{2}|I_1|$, $\frac{1}{2}|I_2|$, and $\frac{1}{2}|C|$ neighbors in $I_1$, $I_2$, and $C$, respectively.\footnote{The phrase ``at the cost of at most $x$ colors'' means that we remove at most $x$ disjoint independent sets and assign them distinct colors.}
\end{claim}
Indeed, every vertex violating Claim~\ref{claim-adjacent to half of I1 I2 C} is controlled by one of $I_1,I_2,C$. These vertices are therefore $3$-colorable, and the remaining vertices satisfy the claim.
We next separate the vertices in $R$ according to whether their degrees into $I_1$ and $I_2$ lie in a fixed middle interval.
Set $\varepsilon=1/441$ and
\[
q_\varepsilon:=\left\lceil\frac{\log_2(1/\varepsilon)}{3\varepsilon}\right\rceil=1292.
\]
Define three subsets $X_1,X_2,Y$ of $R$ by, for $i\in\{1,2\}$,
\[
X_i =\{w\in R: \varepsilon n\leq d_{I_i}(w)\leq (\frac{1}{6}-\varepsilon)n \},
\]
and
\[
Y=\{w\in R: d_{I_i}(w)<\varepsilon n\quad  \text{or}\quad (\frac{1}{6}-\varepsilon)n< d_{I_i}(w)\leq \frac{1}{6}n-2f=\frac{1}{2}|I_i|\quad \text{for each } i=1,2 \}.
\]
Note that $R=X_1 \cup X_2 \cup Y$ and $Y\cap (X_1\cup X_2)=\emptyset$. 
The following claim gives a uniform chromatic bound for vertices having many neighbors both in a $(\frac{1}{3}n-4f)$-independent set $I$ and in $D\setminus I$.

\begin{claim}\label{lem-color the vertex with mid neighbors in I}
Let $I\subseteq D$ be an independent set of order $\frac{1}{3}n-4f$. Then the set of vertices in $R$ having at least $\varepsilon n$ neighbors in $I$ and at least $(\frac{1}{6}+\frac{1}{2}\varepsilon)n$ neighbors in $D\setminus I$ is colorable with at most $2^{q_\varepsilon}$ colors.
\end{claim}
\begin{proof}
Let $R_1$ denote the set in the statement. Choose $x_1\in R_1$ and color all vertices of $R_1$ controlled by $N(x_1)\cap(D\setminus I)$. Let $R_2$ be the set of uncolored vertices. Every $w_2\in R_2$ has at most $\frac{|N(x_1)\cap(D\setminus I)|}{2}$ neighbors in $N(x_1)\cap(D\setminus I)$, and
\begin{align*}
|(N(x_1)\cup N(w_2))\cap (D\setminus I)|&\geq |N(x_1)\cap (D\setminus I)|+(\frac{1}{6}+\frac{1}{2}\e)n-|N(x_1)\cap N(w_2)\cap (D\setminus I)|\\
&\geq \frac{1}{2}|N(x_1)\cap (D\setminus I)|+(\frac{1}{6}+\frac{1}{2}\e)n\\
&\geq (1+\frac{1}{2})(\frac{1}{6}+\frac{1}{2}\e)n.
\end{align*}
Suppose that $x_1,\dots,x_{i-1}$ and $R_i$ have been defined. If $R_i=\emptyset$, stop. Otherwise, choose $x_i\in R_i$, color all vertices of $R_i$ controlled by some vertex set $(N(x_{i_1})\cup\dots\cup N(x_{i_t}))\cap(D\setminus I)$, where $t\in\{1,\dots,i\}$, $i_k$'s are distinct, $i_t=i$ and $i_{k}\in \{1,\dots,i\}$ for each $k\in\{1,\dots,t\}$. These vertices are controlled by one of at most $2^{i-1}$ vertex sets and thus they are the union of at most $2^{i-1}$ independent sets. By induction, up to the $i$-th step, there are totally $2^i$ independent sets being colored.
Let $R_{i+1}$ be the set of uncolored vertices. We consider the subsequence $\{x_{i_1},\dots,x_{i_t}\}\subseteq \{x_1,\dots,x_{i}\}$, where $1\leq t\leq i$ and $i_t$'s are distinct. When $t=1$, it is trivial since
\[
|N(x_{i_1})\cap (D\setminus I)|\geq (\frac{1}{6}+\frac{1}{2}\e)n.
\]
Assume $t\geq2$. Inductively, we have
\[
|(N(x_{i_1})\cup\dots \cup N(x_{i_{t-1}}))\cap (D\setminus I)|\geq (1+\frac{1}{2}+\dots+\frac{1}{2^{t-2}})(\frac{1}{6}+\frac{1}{2}\e)n.
\]
Note that any vertex in $R_{i}$ is not controlled by $N(x_{i_1})\cup\dots \cup N(x_{i_{t-1}})$, and hence we have
\begin{align*}
&\quad |(N(x_{i_{1}})\cup\dots\cup N(x_{i_t}))\cap (D\setminus I)|\\
&\geq |(N(x_{i_1})\cup \dots\cup N(x_{i_{t-1}}))\cap (D\setminus I)|+(\frac{1}{6}+\frac{1}{2}\e)n-|(N(x_{i_1})\cup\dots \cup N(x_{i_{t-1}}))\cap N(x_{i_{t}})\cap (D\setminus I)|\\
&\geq \frac{1}{2}|(N(x_{i_1})\cup \dots\cup N(x_{i_{t-1}}))\cap (D\setminus I)|+(\frac{1}{6}+\frac{1}{2}\e)n\\
&\geq (1+\frac{1}{2}+\dots+\frac{1}{2^{t-1}})(\frac{1}{6}+\frac{1}{2}\e)n.
\end{align*}
Suppose that the process runs for $j$ steps and produces $x_1,\dots,x_j$. It suffices to bound $j$. Since each $x_i$ has at least $\varepsilon n$ neighbors in $I$, averaging gives a vertex $x\in I$ adjacent to at least $\frac{\varepsilon n}{\frac{n}{3}-4f}j\geq3\varepsilon j$ of the vertices $x_1,\dots,x_j$. The union of the neighborhoods in $D\setminus I$ of these vertices has order at least
\[
(\frac{1}{6}+\frac{1}{2}\varepsilon)n (1+\frac{1}{2}+\frac{1}{4}+\dots+\frac{1}{2^{3\varepsilon j-1}})=(\frac{1}{3}+\varepsilon)(1-\frac{1}{2^{3\varepsilon j}})n.
\]
If $1-\frac{1}{2^{3\varepsilon j}}\geq1-\varepsilon$, then this union has order at least $(\frac{1}{3}+\varepsilon)(1-\varepsilon)n\geq(\frac{1}{3}+\frac{1}{2}\varepsilon)n$. Because $G$ is triangle-free, $x$ is adjacent to no vertex of this union or of $I$. These two sets have total order at least $(\frac{2}{3}+\frac{1}{2}\varepsilon)n-4f$, so $d(x)\leq(\frac{1}{3}-\frac{1}{2}\varepsilon)n+4f$, contrary to $\delta(G)\geq\frac{1}{3}n-f$ for sufficiently large $n$. Hence $1-2^{-3\varepsilon j}<1-\varepsilon$, which gives $j<\frac{\log_2(1/\varepsilon)}{3\varepsilon}\leq q_\varepsilon$. 
Recall that until the $i$-th step we totally color at most $2^i$ independent sets.
Thus $R_1$ can be colored with at most $2^{q_{\varepsilon}}$ colors.
\end{proof}

Every vertex in $X_1$ has at least $\varepsilon n$ neighbors in $I_1$ and at least $(\frac{1}{6}+\varepsilon)n-4f\geq (\frac{1}{6}+\frac{1}{2}\varepsilon)n$ neighbors in $D\setminus I_1$.
By Claim~\ref{lem-color the vertex with mid neighbors in I}, each of $X_1$ and $X_2$ can be colored with at most $2^{q_\varepsilon}$ colors. It remains to color $Y$. The following claim shows that a vertex of $Y$ with many neighbors in both $I_1$ and $I_2$ has few neighbors in $C$.
\begin{claim}\label{claim-neighbor of vertex in Y is regular}
At the cost of at most $2^{q_\varepsilon}$ colors, we may assume that every $w\in Y$ satisfying $d_{I_1}(w)\geq(\frac{1}{6}-\varepsilon)n$ and $d_{I_2}(w)\geq(\frac{1}{6}-\varepsilon)n$ also satisfies $d_C(w)\leq2\varepsilon n$.
\end{claim}
\begin{proof}
Let $Z$ be the set of vertices $w\in Y$ satisfying $d_{I_1}(w)\geq(\frac{1}{6}-\varepsilon)n$, $d_{I_2}(w)\geq(\frac{1}{6}-\varepsilon)n$, and $d_C(w)\geq2\varepsilon n$. For each $w\in Z$, we have $d_{I_2\cup C}(w)\geq(\frac{1}{6}+\varepsilon)n$ and $d_{I_1}(w)\geq(\frac{1}{6}-\varepsilon)n>\varepsilon n$. Claim~\ref{lem-color the vertex with mid neighbors in I} therefore shows that $Z$ can be colored with at most $2^{q_\varepsilon}$ colors.
\end{proof}
For a vertex $w\in Y$, if $(\frac{1}{6}-\e)n\leq d_{I_1}(w)\leq \frac{1}{6}n-2f$ and $d_{I_2}(w)<\e n$, then
\[
d_{C}(w)\geq d_{D}(w)-d_{I_1}(w)-d_{I_2}(w)\geq \frac{1}{3}n-4f-(\frac{1}{6}n-2f)-\e n=(\frac{1}{6}-\e)n-2f.
\]
Together with Claim~\ref{claim-adjacent to half of I1 I2 C}, this gives $(\frac{1}{6}-\e)n-2f\leq d_C(w)\leq\frac{1}{2}|C|=\frac{1}{6}n+\frac{5}{2}f$.
If $Y$ is independent, it requires only one color. Otherwise, choose an edge $uv\in E(Y)$. By Claims~\ref{claim-adjacent to half of I1 I2 C} and~\ref{claim-neighbor of vertex in Y is regular}, and by the definition of $Y$, the disjoint neighborhoods $N(u)$ and $N(v)$ lead to four candidate patterns, up to interchanging $u,v$ and $I_1,I_2$. Choose disjoint independent sets $I_1'\subseteq N(u)\cap D$ and $I_2'\subseteq N(v)\cap D$, each of order $\frac{n}{3}-4f$; see Figure~\ref{fig:four-neighborhood-patterns}. To record the intended quantitative ranges, set
\[
\mathcal S=[0,2\varepsilon n],\qquad
\mathcal M=\left[\left(\frac16-\varepsilon\right)n,\frac16n-2f\right],\qquad
\mathcal M_C=\left[\left(\frac16-\varepsilon\right)n-2f,\frac16n+\frac52f\right].
\]
The six entries in each row below give, in order, the sizes of the intersections of $I_1'$ and $I_2'$ with $I_1,I_2,C$:
\[
\begin{array}{c|ccc|ccc}
 & |I_1'\cap I_1|&|I_1'\cap I_2|&|I_1'\cap C|
 & |I_2'\cap I_1|&|I_2'\cap I_2|&|I_2'\cap C|\\ \hline
1&\mathcal M&\mathcal M&\mathcal S&\mathcal M&\mathcal M&\mathcal S\\
2&\mathcal M&\mathcal M&\mathcal S&\mathcal S&\mathcal M&\mathcal M_C\\
3&\mathcal S&\mathcal M&\mathcal M_C&\mathcal S&\mathcal M&\mathcal M_C\\
4&\mathcal M&\mathcal S&\mathcal M_C&\mathcal S&\mathcal M&\mathcal M_C
\end{array}
\]

\begin{figure}
    \centering
    \begin{subfigure}[b]{0.2\textwidth}
        \centering
        \begin{tikzpicture}[scale=0.55]

\draw  (-1,1.5) ellipse (2 and 1);
\draw  (-1,-1) ellipse (2 and 1);
\draw  (-1,-3.5) ellipse (2 and 1);
\node at (1.5,1.5) {$I_1$};
\node at (1.5,-1) {$I_2$};
\node at (1.5,-3.5) {$C$};
\draw  (-2,0.25) ellipse (1 and 2) ;
\draw  (0.05,0.25) ellipse (1 and 2);

\node at (-2,2.8) {$I_1'$};
\node at (0.3,2.8) {$I_2'$};

\draw [orange] plot[smooth, tension=.7] coordinates {(-1.95,2.2) (-2.9,1.35) (-1.5,2) (-2.7,1) (-1.25,1.6) (-2.45,0.85) (-1.15,1.2) (-2,0.65) (-1.05,0.75)};
\draw [orange] plot[smooth, tension=.7] coordinates {(-2.85,-0.6) (-1.5,0) (-2.8,-0.95) (-1.05,-0.25) (-2.6,-1.3) (-1.15,-0.85) (-2.4,-1.55) (-1.5,-1.5)};
\draw [orange] plot[smooth, tension=.7] coordinates {(-0.6,1.7) (0.2,2.15) (-0.8,1) (0.4,1.85) (-0.85,0.75) (0.75,1.5) (-0.5,0.6) (0.6,1)};
\draw [orange] plot[smooth, tension=.7] coordinates {(-0.8,-0.25) (0,-0.2) (-0.8,-0.6) (0.45,-0.45) (-0.7,-0.85) (0.8,-0.65) (-0.6,-1.15) (0.75,-1.05) (-0.5,-1.5) (0.5,-1.5)};

        \end{tikzpicture}
    \end{subfigure}
    \hfill 
    \begin{subfigure}[b]{0.2\textwidth}
        \centering
        \begin{tikzpicture}[scale=0.55]

\draw  (-1,1.5) ellipse (2 and 1);
\draw  (-1,-1) ellipse (2 and 1);
\draw  (-1,-3.5) ellipse (2 and 1);
\node at (1.5,1.5) {$I_1$};
\node at (1.5,-1) {$I_2$};
\node at (1.5,-3.5) {$C$};
\draw  (-2,0.25) ellipse (1 and 2) ;
\draw  (0,-2.25) ellipse (1 and 2);

\node at (-2,2.8) {$I_1'$};
\node at (0.45,0.35) {$I_2'$};

\draw [orange] plot[smooth, tension=.7] coordinates {(-1.95,2.2) (-2.9,1.35) (-1.5,2) (-2.7,1) (-1.25,1.6) (-2.45,0.85) (-1.15,1.2) (-2,0.65) (-1.05,0.75)};
\draw [orange] plot[smooth, tension=.7] coordinates {(-2.85,-0.6) (-1.5,0) (-2.8,-0.95) (-1.05,-0.25) (-2.6,-1.3) (-1.15,-0.85) (-2.4,-1.55) (-1.5,-1.5)};

\draw  [orange] plot[smooth, tension=.7] coordinates {(-0.65,-0.8) (0.15,-0.4) (-0.75,-1.25) (0.55,-0.65) (-0.9,-1.65) (0.7,-1) (-0.9,-1.9) (0.75,-1.4)};
\draw [orange] plot[smooth, tension=.7] coordinates {(-1,-2.5) (0.55,-2.9) (-1,-3) (0.8,-3.35) (-0.7,-3.6) (0.65,-3.75) (-0.5,-4) (0.5,-4)};

        \end{tikzpicture}
    \end{subfigure}
\hfill 
    \begin{subfigure}[b]{0.2\textwidth}
        \centering
        \begin{tikzpicture}[scale=0.55]

\draw  (-1,1.5) ellipse (2 and 1);
\draw  (-1,-1) ellipse (2 and 1);
\draw  (-1,-3.5) ellipse (2 and 1);
\node at (1.5,1.5) {$I_1$};
\node at (1.5,-1) {$I_2$};
\node at (1.5,-3.5) {$C$};
\draw  (-2,-2.2) ellipse (1 and 2) ;
\draw  (0,-2.25) ellipse (1 and 2);
\node at (-2.60,0.25) {$I_1'$};
\node at (0.60,0.25) {$I_2'$};

\draw [orange] plot[smooth, tension=.7] coordinates {(-1.95,-0.3) (-2.9,-1.15) (-1.5,-0.5) (-2.7,-1.5) (-1.25,-0.9) (-2.45,-1.65) (-1.15,-1.3) (-2,-1.85) (-1.05,-1.75)};
\draw [orange] plot[smooth, tension=.7] coordinates {(-2.85,-3.1) (-1.5,-2.5) (-2.8,-3.45) (-1.05,-2.75) (-2.6,-3.8) (-1.15,-3.35) (-2.4,-4.05) (-1.5,-4)};

\draw  [orange] plot[smooth, tension=.7] coordinates {(-0.65,-0.8) (0.15,-0.4) (-0.75,-1.25) (0.55,-0.65) (-0.9,-1.65) (0.7,-1) (-0.9,-1.9) (0.75,-1.4)};
\draw [orange] plot[smooth, tension=.7] coordinates {(-1,-2.5) (0.55,-2.9) (-1,-3) (0.8,-3.35) (-0.7,-3.6) (0.65,-3.75) (-0.5,-4) (0.5,-4)};

        \end{tikzpicture}
    \end{subfigure}
\hfill
\begin{subfigure}[b]{0.2\textwidth}
\centering
\begin{tikzpicture}[scale=0.55]
\draw  (-1,1.5) ellipse (2 and 1);
\draw  (-1,-1) ellipse (2 and 1);
\draw  (-1,-3.5) ellipse (2 and 1);
\node at (1.5,1.5) {$I_1$};
\node at (1.5,-1) {$I_2$};
\node at (1.5,-3.5) {$C$};
\draw  (0,-2.25) ellipse (1 and 2);
\node at (-2,2.9) {$I_1'\cap I_1$};
\node at (-2.35,-2.20) {$I_1'\cap C$};
\node at (0.45,0.35) {$I_2'$};

\draw  [orange] plot[smooth, tension=.7] coordinates {(-0.65,-0.8) (0.15,-0.4) (-0.75,-1.25) (0.55,-0.65) (-0.9,-1.65) (0.7,-1) (-0.9,-1.9) (0.75,-1.4)};
\draw [orange] plot[smooth, tension=.7] coordinates {(-1,-2.5) (0.55,-2.9) (-1,-3) (0.8,-3.35) (-0.7,-3.6) (0.65,-3.75) (-0.5,-4) (0.5,-4)};
\draw  (-2,1.5) ellipse (1 and 1);
\draw  (-2,-3.5) ellipse (1 and 1);
\draw [orange] plot[smooth, tension=.7] coordinates {(-3,1.5)};
\draw [orange] plot[smooth, tension=.7] coordinates {(-1.7,2.35) (-2.85,1.7) (-1.3,2.15) (-2.85,1.1) (-1,1.6) (-2.25,0.7)};
\draw [orange] plot[smooth, tension=.7] coordinates {(-3,-3.5) (-1.55,-2.65) (-2.8,-3.9) (-1.2,-3.1) (-2.4,-4.2) (-1,-3.5) (-1.95,-4.4) (-1,-4)};

\end{tikzpicture}
\end{subfigure}
    \caption{Four possible neighborhood patterns for an edge in $Y$}
    \label{fig:four-neighborhood-patterns}
\end{figure}
The following claim treats an independent set whose intersections with both $I_1$ and $I_2$ lie between $(\frac16-\varepsilon)n$ and $\frac16n-2f$. It shows that every disjoint independent set of the same order meets $C$ in at most $6\varepsilon n$ or in at least $(\frac13-10\varepsilon)n$. In particular, it excludes the second pattern in Figure~\ref{fig:four-neighborhood-patterns}.
\begin{claim}\label{lem-neighbor in C is extremal}
Assume that $I_1'\subseteq D$ is an independent set of order $\frac{1}{3}n-4f$ such that $(\frac{1}{6}-\varepsilon)n\leq |I_1\cap I_1'|\leq \frac{1}{6}n-2f$ and $(\frac{1}{6}-\varepsilon)n\leq |I_2\cap I_1'|\leq \frac{1}{6}n-2f$. Then we may assume that every independent set $I_2'\subseteq D\setminus I_1'$ of order $\frac{1}{3}n-4f$ satisfies either $|I_2'\cap C|\leq 6\varepsilon n$ or $|I_2'\cap C|\geq (\frac{1}{3}-10\varepsilon)n$.
\end{claim}
\begin{proof}
Suppose otherwise that $6\varepsilon n<|I_2'\cap C|<(\frac{1}{3}-10\varepsilon)n$. It suffices to show that six subsets of $V(G)$ control all vertices of $Y$.
The preceding analysis of $(I_1,I_2)$ remains valid with $(I_1',I_2')$ in its place. Including all deletions made before this claim, the cumulative cost is at most $3+9\cdot2^{q_\varepsilon}$ colors; after these deletions, for every $x\in Y$ and $i\in\{1,2\}$, either $d_{I_i'}(x)<\varepsilon n$ or $(\frac{1}{6}-\varepsilon)n\leq d_{I_i'}(x)\leq \frac{1}{6}n-2f$.
Take an arbitrary vertex $w\in Y$.

If $d_{I_1}(w)\geq \varepsilon n$, then it follows from the definition of $Y$ that $(\frac{1}{6}-\varepsilon) n\leq d_{I_1}(w)\leq \frac{1}{6}n-2f $. By Claim \ref{claim-neighbor of vertex in Y is regular} and $d_{D}(w)\geq \frac{1}{3}n-4f$, we have either $d_{I_2}(w)< \varepsilon n$ or $d_{C}(w)\leq 2\varepsilon n$.
If $d_{I_2}(w)< \varepsilon n$ and $d_{I_1 \cap I_1'}(w)\geq \e n$, then $d_{I_1'}(w)\geq (\frac{1}{6}-\e)n$ and $d_{I_1 \cap I_1'}(w)\geq d_{I_1'}(w)-d_{I_2}(w)-(\frac{1}{3}n-4f-|I_1\cap I_1'|-|I_2\cap I_1'|) \geq (\frac{1}{6}-4\e)n$, and thus $d_{I_1\setminus I_1'}(w)\leq 4\e n$. Altogether, $d_{(I_1\cap I_1')\cup C}(w)\geq (\frac{1}{3}-6\e)n$ and $|(I_1\cap I_1')\cup C|\leq (\frac{1}{2}+\e)n$, so $(I_1\cap I_1')\cup C$ controls $w$.
If $d_{I_2}(w)< \varepsilon n$ and $d_{I_1 \cap I_1'}(w)< \e n$, then $d_{(I_1\setminus I_1')\cup C}(w)\geq (\frac{1}{3}-3\e)n$ and $|(I_1\setminus I_1')\cup C|\leq (\frac{1}{2}+2\e)n$, and thus $(I_1\setminus I_1')\cup C$ controls $w$. 
If $d_{C}(w)\leq 2\varepsilon n$ and $d_{I_2'}(w)\geq \e n$, then $d_{I_2'}(w)\geq (\frac{1}{6}-\e)n$, $d_{(I_1\cup I_2)\cap I_2'}(w)\geq d_{I_{2}'}(w)-d_{C}(w) \geq(\frac{1}{6}-3\e)n$, and $|(I_1\cup I_2)\cap I_2'|=|I_2'|-|I_2'\cap C|\leq(\frac{1}{3}n-4f)-6\e n$, which implies $(I_1\cup I_2)\cap I_2'$ controls $w$.
If $d_{C}(w)\leq 2\varepsilon n$ and $d_{I_2'}(w)< \e n$, then $d_{(I_1\cup I_2)\setminus I_2'}(w)\geq (\frac{1}{3}-4\e)n$ and $|(I_1\cup I_2)\setminus I_2'|=|I_1\cup I_2|-|(I_1\cup I_2)\cap I_2'|=|I_1\cup I_2|-|I_2'|+|C\cap I_2'| \leq \frac{1}{3}n-4f+(\frac{1}{3}-10\e)n$, which implies that $(I_1\cup I_2)\setminus I_2'$ controls $w$.

If $d_{I_1}(w)< \varepsilon n$ and $d_{I_2\cap I_1'}(w)< \e n$, then $d_{(I_2\setminus I_1')\cup C}(w)\geq (\frac{1}{3}-3 \e)n$ and $|(I_2\setminus I_1')\cup C|\leq (\frac{1}{2}+2\e)n$, which implies that $(I_2\setminus I_1')\cup C$ controls $w$.
If $d_{I_1}(w)< \varepsilon n$ and $d_{I_2\cap I_1'}(w)\geq \e n$, then $d_{I_1'}(w)\geq (\frac{1}{6}-\e) n$, $d_{I_2 \cap I_1'}(w)\geq (\frac{1}{6}-4\e) n$, $d_{I_2 \setminus I_1'}(w)\leq 4\e n$. Thus $d_{(I_2\cap I_1')\cup C}(w)\geq (\frac{1}{3}-6\e)n$ and $|(I_2\cap I_1')\cup C|\leq (\frac{1}{2}+\e)n$, which implies $(I_2\cap I_1')\cup C$ controls $w$.

Thus every vertex of $Y$ is controlled by one of $(I_1\cap I_1')\cup C$, $(I_1\setminus I_1')\cup C$, $(I_1\cup I_2)\cap I_2'$, $(I_1\cup I_2)\setminus I_2'$, $(I_2\setminus I_1')\cup C$, and $(I_2\cap I_1')\cup C$. Hence $Y$ is $6$-colorable.
\end{proof}
We may consider each component separately and hence assume that $Y$ is connected.
Claims~\ref{claim-adjacent to half of I1 I2 C} and~\ref{lem-neighbor in C is extremal} ensure that, if some $u\in Y$ satisfies $(\frac{1}{6}-\e)n\leq d_{I_1}(u)\leq \frac{1}{6}n-2f$ and $(\frac{1}{6}-\e)n\leq d_{I_2}(u)\leq \frac{1}{6}n-2f$, then every neighbor $v$ of $u$ in $Y$ satisfies the same inequalities. Since $Y$ is connected, the property then holds throughout $Y$.

The next claim is the analogue needed when one of the independent parts is replaced by $C$.
Notice that the proof of Claim \ref{lem-neighbor in C is extremal} does not depend on whether $I_1,I_2,C$ are independent, but only depend on the size of them and how these sets intersect with the neighborhood of vertex in $Y$. For example, the estimate of $N(w)\cap I_1$ and $N(w)\cap C$ are very similar (either at most $2\e n$ or between $(\frac{1}{6}-\e)n-2f$ and $\frac{1}{6}n+\frac{5}{2}f$), and the size of $I_1$ and $C$ have a small difference $9f$. Hence the proof of Claim \ref{claim-exclude-fourth-neighborhood-pattern} is similar and we omit it.
\begin{claim}\label{claim-exclude-fourth-neighborhood-pattern}
Assume that $I_1'\subseteq D$ is an independent set of order $\frac{1}{3}n-4f$ such that $(\frac{1}{6}-\varepsilon)n\leq |I_1\cap I_1'|\leq \frac{1}{6}n-2f$ and $(\frac{1}{6}-\varepsilon)n-2f\leq |C\cap I_1'|\leq \frac{1}{6}n+\frac{5}{2}f$. Then we may assume that every independent set $I_2'\subseteq D\setminus I_1'$ of order $\frac{1}{3}n-4f$ satisfies either $|I_2'\cap I_2|\leq 6\varepsilon n$ or $|I_2'\cap I_2|\geq (\frac{1}{3}-10\varepsilon)n$.
\end{claim}
This claim excludes the fourth pattern in Figure~\ref{fig:four-neighborhood-patterns}.
Similarly, if some $u\in Y$ satisfies $(\frac{1}{6}-\e)n\leq d_{I_1}(u)\leq \frac{1}{6}n-2f$ and $(\frac{1}{6}-\e)n-2f\leq d_C(u)\leq \frac{1}{6}n+\frac{5}{2}f$, then every vertex $v\in Y$ satisfies the same inequalities. Thus the neighborhood of each vertex in $Y$ is concentrated in one of $I_1\cup I_2$, $I_1\cup C$, and $I_2\cup C$, up to the error bounds displayed above. In either of the last two cases, choose an edge $uv\in E(Y)$ and disjoint independent sets $I_1'\subseteq D\cap N(u)$ and $I_2'\subseteq D\cap N(v)$, each of order $\frac{1}{3}n-4f$. Replacing $(I_1,I_2)$ by $(I_1',I_2')$ shows that the neighborhood of every vertex in $Y$ is concentrated in $I_1'\cup I_2'$ with the same bounds. Repeating the preceding argument for this new pair costs at most another $9+9\cdot 2^{q_\varepsilon}$ colors. We may therefore assume that the neighborhood of every vertex in $Y$ is concentrated in $I_1\cup I_2$ with the displayed error bounds.
Choose an edge $uv\in E(Y)$ and disjoint independent sets $I_1'\subseteq D\cap N(u)$ and $I_2'\subseteq D\cap N(v)$, each of order $\frac{1}{3}n-4f$. Then, for all $i,j\in\{1,2\}$, $(\frac{1}{6}-\e)n\leq|I_i\cap I_j'|\leq \frac{1}{6}n-2f$ and $|C\cap I_j'|\leq2\e n$.
The next claim restricts the distribution of the neighbors of vertices in $I_{1}\cup I_{2}$.
\begin{claim}\label{claim-vertex in I1I2 is critical for C}
For every $w\in I_1\cup I_2$, either $d_C(w)\leq15\e n$ or $d_C(w)\geq(\frac{1}{3}-10\e)n$.
\end{claim}
\begin{proof}
Let $w\in (I_1\cup I_2)\cap (I_1'\cup I_2')$, and choose $a,b\in\{1,2\}$ with $w\in I_a\cap I_b'$. Take a set $B\subseteq N_D(w)$ of order $\frac{1}{3}n-4f$. Since $B$ is disjoint from both $I_a$ and $I_b'$, writing $a'=3-a$ gives
\[
|B\cap C|\geq |B|-|I_{a'}\setminus I_b'|
\geq \left(\frac{1}{3}n-4f\right)-\left(\frac{1}{3}n-4f-\left(\frac{1}{6}-\varepsilon\right)n\right)
=\left(\frac{1}{6}-\varepsilon\right)n>6\varepsilon n.
\]
Apply Claim~\ref{lem-neighbor in C is extremal} with $I_b'$ as its first independent set and $B$ as its disjoint second independent set. The low alternative is excluded by the preceding inequality, and hence $d_C(w)\geq|B\cap C|\geq(\frac{1}{3}-10\varepsilon)n$.

For vertex $w\in (I_1\cup I_2)\setminus (I_1'\cup I_2')$, without loss of generality we assume $w\in I_2$. If $15\e n < d_{C}(w)< (\frac{1}{3}-10\e)n$, then $d_{I_1}(w)\geq \frac{1}{3}n-4f-d_{C}(w)\geq 9\e n$ and
\[
d_{I_1\cap(I_1'\cup I_2')}(w)\geq d_{I_1}(w)+|I_1\cap (I_1'\cup I_2')|-|I_1|\geq 9\e n+2(\frac{1}{6}-\e)n-(\frac{1}{3}n-4f)\geq 7\e n.
\]
Choose $x\in N(w)\cap I_1\cap(I_1'\cup I_2')$. Since $d_C(w)>15\e n$ and $|C|=\frac{n}{3}+5f$, triangle-freeness gives $d_C(x)<|C|-15\e n\leq\frac{1}{3}n-10\e n$, contradicting $d_C(x)\geq(\frac{1}{3}-10\e)n$.
\end{proof}
Vertices in $C$ satisfy a similar dichotomy.
\begin{claim}\label{claim-vertex in C is critical for C}
For each vertex $w\in C$, we can assume either $d_{C}(w)\leq 17\e n$ or $d_{C}(w)\geq(\frac{1}{3}-5\e)n$.
\end{claim}
\begin{proof}
Suppose otherwise that some $w_0\in C$ satisfies $17\e n<d_C(w_0)<(\frac{1}{3}-5\e)n$. Then $d_{I_1\cup I_2}(w_0)\geq \frac{1}{3}n-4f-d_C(w_0)\geq5\e n-4f$, which implies that
\[
\begin{aligned}
d_{(I_1\cup I_2)\cap(I_1'\cup I_2')}(w_0)
&\geq d_{I_1\cup I_2}(w_0)+ |(I_1\cup I_2)\cap(I_1'\cup I_2')|-|I_1\cup I_2|\\
&\geq 5\e n-4f+4(\frac{1}{6}-\e)n-2(\frac{1}{3}n-4f)\\
&\geq \e n+4f.
\end{aligned}
\]
Choose a vertex $x$ in the nonempty set $N(w_0)\cap(I_1\cup I_2)\cap(I_1'\cup I_2')$. Triangle-freeness gives $d_C(x)\leq |C|-d_{C}(w_0)\leq\frac{1}{3}n+5f-17\e n$. The first paragraph of the proof of Claim~\ref{claim-vertex in I1I2 is critical for C} gives $d_C(x)\geq(\frac{1}{3}-10\e)n$, a contradiction because $5f<7\varepsilon n$ for all sufficiently large $n$.
\end{proof}
Let $L=\{w\in D:d_C(w)>\frac{|C|}{2}\}$ and $J=D\setminus L$. Clearly $L\supseteq (I_1\cup I_2)\cap(I_1'\cup I_2')$ and $|L|\geq |(I_1\cup I_2)\cap(I_1'\cup I_2')|\geq 4(\frac{1}{6}-\e)n=(\frac{2}{3}-4\e)n$. Since $\delta(G[D])\geq \frac{1}{3}n-4f$, we have $|L|\leq |D|-\delta(G[D])\leq n-3f-(\frac{1}{3}n-4f)=\frac{2}{3}n+f$.
For each $w\in J$, $d_{J}(w)\leq 17\e n+|(I_1\cup I_2)\setminus (I_1'\cup I_2')|\leq 21\e n$ by Claims \ref{claim-vertex in I1I2 is critical for C} and \ref{claim-vertex in C is critical for C}.
For each edge $uv\in E(J)$, we have $d_J(u)+d_J(v)\leq42\e n$. Furthermore, $d_D(v)\geq\frac{1}{3}n-4f$ and $d_D(u)+d_D(v)\leq |L|+d_J(u)+d_J(v)\leq\frac{2}{3}n+f+42\e n$, so $d_D(u),d_D(v)\leq\frac{1}{3}n+5f+42\e n$. Let $S=\{w\in J:d_L(w)\leq(\frac{1}{3}+42\e)n+5f\}$. Both endpoints of every edge in $J$ lie in $S$. We next bound $|S|$ from above.

\begin{claim}\label{claim-upper bound of S}
It follows that $|S|\leq 3(\frac{2}{3}n+f-|L|)$.
\end{claim}
\begin{proof}
Since all vertices in $L$ are controlled by $C$, the set $L$ is independent and $e(L,J)\geq |L|(\frac{1}{3}n-4f)$. On the other hand, by the definition of $S$ we have
\[
e(L,J)\leq |S|(\frac{1}{3}n+5f+42\e n)+(n-3f-|L|-|S|)|L|=(n-3f-|L|)|L|-|S|(|L|-\frac{1}{3}n-42\e n-5f).
\]
Combining the upper and lower bounds gives
\[
|S|\leq (\frac{2}{3}n+f-|L|)\frac{|L|}{|L|-\frac{1}{3}n-42\e n-5f}\leq 3(\frac{2}{3}n+f-|L|).
\]
The last inequality follows from $|L|\geq \frac{2}{3}n-4\e n$ and $\e\leq \frac{1}{441}$.
\end{proof}
Recall that for each vertex $w\in Y$, we have 
\[
d_{L}(w)\geq d_{(I_1\cup I_2)\cap (I_1'\cup I_2')}(w)\geq d_{I_1\cup I_2}(w)-|(I_1\cup I_2)\setminus (I_1'\cup I_2')|\geq (\frac{1}{3}-6\e)n.
\]
Notice that for any $u\in N(w)\cap J$, $d_{L}(u)\leq |L|-d_{L}(w)\leq \frac{2}{3}n+f-(\frac{1}{3}-6\e )n=\frac{1}{3}n+6\e n+f$, which implies $N(w)\cap J\subseteq S$.

Let $Y_{\mathrm{hi}}$ denote the set of vertices $w\in Y$ satisfying $d_L(w)\geq \frac{1}{2}|L|-5f$. For every $w\in Y\setminus Y_{\mathrm{hi}}$, we have
\begin{equation}\label{ineq-lower bound on the degree in S}
d_{J}(w)=d_{S}(w)\geq \frac{1}{3}n-4f-d_{L}(w)>\frac{1}{3}n-4f-(\frac{1}{2}|L|-5f)\geq \frac{1}{2}(\frac{2}{3}n+f-|L|).
\end{equation}
We shall use the following separation observation in two iterative colorings.
\begin{claim}\label{claim-disjoint-S-neighborhoods}
Let $x,z\in Y$. If $z$ is not controlled by $N_L(x)$, then
\[
N_S(x)\cap N_S(z)=\emptyset.
\]
\end{claim}
\begin{proof}
Since $z$ is not controlled by $N_L(x)$,
\[
|N_L(z)\cap N_L(x)|\leq\frac{1}{2}d_L(x).
\]
Consequently,
\[
|N_L(z)\cup N_L(x)|
\geq d_L(z)+\frac{1}{2}d_L(x)
\geq\left(\frac{1}{2}-9\varepsilon\right)n.
\]
Suppose that $y\in N_S(z)\cap N_S(x)$. Triangle-freeness implies that $y$ has no neighbor in $N_L(z)\cup N_L(x)$. Since $|L|\leq\frac{2}{3}n+f$,
\[
d_L(y)\leq |L|-|N_L(z)\cup N_L(x)|
\leq\left(\frac{1}{6}+9\varepsilon\right)n+f.
\]
On the other hand, $y\in S\subseteq J$, so $d_J(y)\leq21\varepsilon n$ and
\[
d_L(y)=d_D(y)-d_J(y)
\geq\left(\frac{1}{3}-21\varepsilon\right)n-4f.
\]
For $\varepsilon=1/441$, these two inequalities are incompatible for all sufficiently large $n$, because $(\frac{1}{6}-30\varepsilon)n>5f$.
\end{proof}

We now show that $Y\setminus Y_{\mathrm{hi}}$ is $6$-colorable.
\begin{claim}\label{claim- Y-H has bounded chromatic number}
$Y\setminus Y_{\mathrm{hi}}$ is $6$-colorable.
\end{claim}
\begin{proof}
Set $Q=\frac{2}{3}n+f-|L|$ and start with $U_1=Y\setminus Y_{\mathrm{hi}}$. If $U_i\neq\emptyset$, choose $z_i\in U_i$, give one new color to all vertices of $U_i$ controlled by $N_L(z_i)$, and let $U_{i+1}$ be the set of vertices left uncolored. The colored set is independent and contains $z_i$. If $i<j$, then $z_j$ is not controlled by $N_L(z_i)$, so Claim~\ref{claim-disjoint-S-neighborhoods} gives
\[
N_S(z_i)\cap N_S(z_j)=\emptyset.
\]
By~\eqref{ineq-lower bound on the degree in S}, each selected representative satisfies $d_S(z_i)>Q/2$, whereas Claim~\ref{claim-upper bound of S} gives $|S|\leq3Q$. Six selected representatives would therefore have pairwise disjoint $S$-neighborhoods whose total order is greater than $3Q$, a contradiction. The process uses at most five colors; in particular, $Y\setminus Y_{\mathrm{hi}}$ is $6$-colorable.
\end{proof}
Now the size of $L$ can be bounded from below.
\begin{claim}\label{claim-lower bound on L}
We can assume $|L|\geq \frac{2}{3}n-9f$ at the cost of at most $61$ colors.
\end{claim}
\begin{proof}
First give one color to all vertices of $Y$ controlled by $L$, and let $U$ be the set of vertices left uncolored. Every $w\in U$ satisfies $d_L(w)\leq|L|/2$ and $N(w)\cap J\subseteq S$, and hence
 \[
d_{S}(w)\geq \frac{1}{3}n-4f-d_{L}(w)\geq \frac{1}{3}n-4f-\frac{1}{2}|L|=\frac{1}{2}(\frac{2}{3}n-8f-|L|).
\]
Suppose that $|L|\leq \frac{2}{3}n-9f$, and let $Q=\frac{2}{3}n+f-|L|$. Then $Q\geq10f$, and the preceding inequality together with Claim~\ref{claim-upper bound of S} gives
\[
d_S(w)\geq\frac{Q-9f}{2}\geq\frac{Q}{20}\geq\frac{|S|}{60}.
\]
Since $f$ is positive, $Q\geq10f>0$. Apply the iterative coloring procedure from the proof of Claim~\ref{claim- Y-H has bounded chromatic number} to $U$. Claim~\ref{claim-disjoint-S-neighborhoods} shows that the $S$-neighborhoods of the selected representatives are pairwise disjoint. If the procedure selects $m$ representatives, then
\[
\frac{mQ}{20}\leq \sum_{i=1}^{m}|N_S(z_i)|\leq |S|\leq3Q,
\]
and therefore $m\leq60$. Together with the first color used for vertices controlled by $L$, this gives a $61$-coloring of $Y$. Consequently, at the cost of at most $61$ colors, we may assume that $|L|\geq \frac{2}{3}n-9f$.
\end{proof}

We now use the hypothesis on $f$-vertex subgraphs of $KG(P,21f)$. Let
\[
p=\left\lceil\frac{1}{2}|L|-5f\right\rceil,
\qquad
\kappa=|L|-2p,
\qquad\text{and}\qquad
P=\left\lceil\frac{1}{3}n-10f\right\rceil.
\]
Then $10f-1\leq\kappa\leq10f$. The lower bound $|L|\geq\frac{2}{3}n-9f$ gives $p\geq P$, while
\[
2p+\kappa=|L|\leq\frac{2}{3}n+f\leq2P+21f.
\]
Embed the ground set of $KG(p,\kappa)$ into that of $KG(P,21f)$ and map every $p$-set to an arbitrary $P$-subset of it. Disjoint sets have disjoint images, so this defines a homomorphism from $KG(p,\kappa)$ to $KG(P,21f)$. Hence every subgraph of the former graph on at most $f$ vertices is $t$-colorable: a smaller subgraph may be enlarged to one on $f$ vertices and the resulting coloring restricted. For every $w\in Y_{\mathrm{hi}}$, we have $d_L(w)\geq p$; choose a $p$-subset of $N_L(w)$. Adjacent vertices have disjoint $L$-neighborhoods, so these choices define a homomorphism from $G[Y_{\mathrm{hi}}]$ to $KG(p,\kappa)$. Since $Y_{\mathrm{hi}}\subseteq Y\subseteq R$, $|R|=3f$, and $\kappa<p$ for all sufficiently large $n$, Proposition~\ref{prop-property of kneser graph}(\ref{property-kneser graph-factor not important}) with $C=3$ shows that $G[Y_{\mathrm{hi}}]$ is $(t+3)$-colorable. For clarity, the bound $3+9\cdot 2^{q_\varepsilon}$ in Claim~\ref{lem-neighbor in C is extremal} is cumulative and already includes the reductions made before that claim; the possible normalization in the paragraph after Claim~\ref{claim-exclude-fourth-neighborhood-pattern} adds at most $9+9\cdot 2^{q_\varepsilon}$ colors. Including the $12$ colors for $D$, the $61$ colors in Claim~\ref{claim-lower bound on L}, and the final $t+3$ colors gives
\[
12+(3+9\cdot 2^{q_\varepsilon})+(9+9\cdot2^{q_\varepsilon})+61+(t+3)=18\cdot 2^{q_\varepsilon}+88+t<10^{391}+t.
\]
This completes the proof.
\end{proof}

\subsection{The extremal structure with unbounded chromatic number}\label{subsection-structure of graph with large chromatic number and large minimum degree}
The proof of Theorem~\ref{thm-sufficiency} also yields Theorem~\ref{thm-24000}. A triangle-free graph with minimum degree at least $\frac{n}{3}-f$ and chromatic number at least $10^{391}$ admits a three-part decomposition resembling Hajnal's construction.
\begin{proof}[Proof of Theorem \ref{thm-24000}]
Assume that $\chi(G)\geq 10^{391}$. In the proof of Theorem~\ref{thm-sufficiency}, before Claim~\ref{claim-lower bound on L} is established, the required bounded coloring of $R$ has not yet been obtained. Hence $|L|\geq \frac{2}{3}n-9f$, and Claim~\ref{claim-upper bound of S} gives $|S|\leq30f$. Thus
\[
|J\setminus S|\geq n-3f-|L|-|S|\geq n-3f-|L|-3(\frac{2}{3}n+f-|L|)\geq \frac{1}{3}n-24f.
\]
Take a $(\frac{2}{3}n-9f)$-subset $I_1\subseteq L$ and a $(\frac{n}{3}-24f)$-subset $I_2\subseteq J\setminus S$. 
Let $A=V(G)\setminus(I_1\cup I_2)$; then $|A|=33f$. For every $w\in I_1$, the minimum-degree condition gives $d_{I_2}(w)\geq\delta(G)-|A|\geq\frac{1}{3}n-34f$. Consequently, every $w\in A$ satisfies either $d_{I_2}(w)\geq\frac{1}{3}n-34f$ or $d_{I_2}(w)\leq10f$. Indeed, otherwise some $w_0\in A$ would satisfy $10f<d_{I_2}(w_0)<\frac{1}{3}n-34f$, forcing $N(w_0)\cap I_1\ne\emptyset$. For $x\in N(w_0)\cap I_1$, triangle-freeness would then give $d_{I_2}(x)\leq|I_2|-|N(w_0)\cap I_2|<\frac{1}{3}n-34f$, a contradiction.
Partition $A$ into two subsets $A_1$ and $A_2$ by setting
$A_1=\{w\in A:d_{I_2}(w)\geq \frac{1}{3}n-34f\}$ and $A_2=\{w\in A:d_{I_2}(w)\leq 10f\}$. The set $A_1\cup I_1$ is controlled by $I_2$ and is therefore independent. For every $w\in A_2$, we have $d_{I_1\cup A_1}(w)\geq \frac{1}{3}n-44f$. Moreover, $|I_1\cup A_1|\leq n-\delta(G)\leq\frac{2}{3}n+f\leq2(\frac{1}{3}n-44f)+97f$. Choosing a $(\frac{1}{3}n-44f)$-subset of $N(w)\cap(I_1\cup A_1)$ for each $w\in A_2$ therefore gives a homomorphism from $G[A_2]$ to $KG(\frac{1}{3}n-44f,97f)$. Replacing $I_1$ by $I_1\cup A_1$ and $A$ by $A_2$ gives the stated partition and completes the proof.
\end{proof}
\section{Below the threshold $\frac{n}{3}$}\label{section-new upper and lower bound}
In this section, we derive Theorems~\ref{thm-slightly improve n/logn} and~\ref{thm-break the border} from Theorems~\ref{thm-lower bound on chi of kneser subgraph imply k3 free graph} and~\ref{thm-sufficiency}.
We first record two basic properties of Kneser graphs and their subgraphs.
\begin{proposition}\label{prop-property of kneser graph}
The following properties of the Kneser graph $KG(n,k)$ and its subgraphs hold:
\begin{enumerate}[(i)]
\item Let $g$ be the odd girth of Kneser graph $KG(n,k)$. Then $g\geq \frac{2n}{k}+1$.\label{property-kneser graph-odd girth large}
\item Suppose that $k<n$. Given a function $f(n)$, if every subgraph of $KG(n,k)$ on at most $f$ vertices is $s$-colorable for a fixed integer $s\geq1$, then, for every constant $C\geq1$, every subgraph of $KG(n,k)$ on at most $Cf$ vertices is $(s+\lceil\log_{3/2}C\rceil)$-colorable.\label{property-kneser graph-factor not important}
\end{enumerate}
\end{proposition}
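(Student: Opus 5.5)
Part (i) follows by tracking the vertex sets along an odd closed walk. Let $A_0A_1\cdots A_{g-1}A_0$ be an odd cycle of length $g=2m+1$ in $KG(n,k)$; each $A_i$ is an $n$-subset of $[2n+k]$. Consecutive sets are disjoint, so $A_0$ and $A_2$ both lie in the complement of $A_1$. That complement has size $n+k$, hence $|A_0\cap A_2|\geq 2n-(n+k)=n-k$, i.e.\ $|A_0\setminus A_2|\leq k$. Iterating along even steps gives $|A_0\setminus A_{2j}|\leq jk$.

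Going once around the cycle, $A_{2m}$ is adjacent to $A_0$, so $A_0\cap A_{2m}=\emptyset$ and $|A_0\setminus A_{2m}|=n$. Therefore $n\leq mk$, which gives $g=2m+1\geq \frac{2n}{k}+1$. This part needs only this short counting argument.

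Part (ii) needs an iterative splitting argument. Let $F$ be a subgraph of $KG(n,k)$ on at most $Cf$ vertices. The aim is to peel off independent sets from $F$ until at most $f$ vertices remain, using only $\lceil\log_{3/2}C\rceil$ extra colors. Then $s$ further colors finish by hypothesis.

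For the peeling step I would use the natural independent sets of Kneser graphs, namely the stars $\mathcal{S}_x=\{A: x\in A\}$ for $x\in[2n+k]$. Each vertex $A$ of $F$ lies in exactly $n$ of the $2n+k$ stars. Averaging, some $x$ has $|\mathcal{S}_x\cap V(F)|\geq \frac{n}{2n+k}|V(F)|$. Since $k<n$, this is at least $\frac{1}{3}|V(F)|$. Removing that star with one new color multiplies the vertex count by at most $\frac{2}{3}$.

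After $j=\lceil\log_{3/2}C\rceil$ such steps, at most $Cf(2/3)^j\leq f$ vertices remain. The remaining graph is still a subgraph of $KG(n,k)$ on at most $f$ vertices, so it is $s$-colorable. This gives a total of $s+\lceil\log_{3/2}C\rceil$ colors.

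The main point to check is that the averaging ratio stays at least $1/3$ at every step, and it does, since $n/(2n+k)>1/3$ exactly when $k<n$. The only other care needed is the rounding of non-integer vertex counts, which the floor conventions of the paper absorb.
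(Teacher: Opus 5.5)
Your proposal is correct and follows essentially the same argument as the paper. For (i) you use the bound $|A_0\setminus A_{2j}|\le jk$ along even steps of the odd cycle, and for (ii) you repeatedly delete a star holding more than a third of the remaining vertices, which works because $n/(2n+k)>1/3$ when $k<n$. Your closing worry about rounding is unnecessary: vertex counts are integers, and you only need the remaining count to be at most $f$.
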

\begin{proof}
For (\ref{property-kneser graph-odd girth large}), let $C_g$ be an odd cycle of $KG(n,k)$ of length $g$ and let $x$ be a vertex on this cycle. Let $u$ and $v$ be the two neighbors of $x$ on $C_g$. Walking along the cycle in the direction from $x$ to $v$, observe that for each $i$ the vertex at distance $2i$ from $x$ shares at least $n-ik$ elements with $x$. Taking $i=\frac{g-1}{2}$, the vertex at distance $g-1$ from $x$, namely $u$, shares at least $n-\frac{g-1}{2}k$ elements with $x$. Since $x$ and $u$ are adjacent, they must be disjoint, so $n-\frac{g-1}{2}k\leq0$, which gives $g\geq \frac{2n}{k}+1$.

For (\ref{property-kneser graph-factor not important}), the case $C=1$ is immediate. Let $q=\lceil\log_{3/2}C\rceil$ and let $A$ be a subgraph of $KG(n,k)$ on at most $Cf$ vertices. Since $k<n$, we have $\frac{n}{2n+k}>\frac13$. Averaging over the elements of $[2n+k]$, some independent star contains more than one third of the current vertices of $A$. Delete such a star and repeat, choosing the star anew at each step. After $q$ steps, at most $(2/3)^q|A|\leq f$ vertices remain, and the subgraph they induce is $s$-colorable by hypothesis. The deleted stars require $q$ additional colors, so $A$ is $(s+q)$-colorable.
\end{proof}
\subsection{A bound from Hajnal's construction}
In this subsection, we prove Theorem~\ref{thm-slightly improve n/logn}.
\begin{proof}[Proof of Theorem \ref{thm-slightly improve n/logn}]
Let $n\in \mathbb{N}$ and $k$ be a function of $n$ with $k|n$ and $k=o(n)$. Let $N=\frac{2n^2}{k(2n+k)}\binom{2n+k}{n}$. Consider the Kneser graph $KG(N,\frac{k}{n}N)$; clearly $KG(n,k)$ is a subgraph of $KG(N,\frac{k}{n}N)$. Note that $|V(KG(n,k))|=\binom{2n+k}{n}=\frac{k(2n+k)}{2n^2}N=\Theta(\frac{k}{n}N)$.
From the bounds $2^n \le \binom{2n+k}{n}\le \bigl(\frac{e(2n+k)}{n}\bigr)^n \le 6^n$ and the fact that $k=o(n)$, it follows that $n=\Theta(\log N)$. Without loss of generality that we assume $k$ is a function of $n$ that tends to infinity arbitrarily slowly as $n\to\infty$. Then $\frac{k}{n}N = w(N)\,\frac{N}{\log N}$, where $w(N)\to\infty$ as $N\to\infty$.
Observe that the chromatic number of $KG(n,k)$ is $k+2$, which tends to infinity.
By Theorem~\ref{thm-lower bound on chi of kneser subgraph imply k3 free graph} and Proposition~\ref{prop-property of kneser graph}(\ref{property-kneser graph-factor not important}) with suitable parameters, we can now construct the desired triangle-free graphs by regarding $KG(n,k)$ as a subgraph of $KG(N,\frac{k}{n}N)$.
\end{proof}
\subsection{Large minimum degree and bounded chromatic number}
In this subsection, we deduce Theorem~\ref{thm-break the border} from Theorem~\ref{thm-sufficiency}.
We use the following upper bound on the chromatic number of a graph of given order and odd girth.
\begin{theorem}[\cite{1984-szemeredi}]\label{thm-odd girth, chi, vertex number}
Let $t\geq1$ be an integer. If $G$ is an $s$-vertex graph with no odd cycle of length less than $4ts^{1/t}$, then $\chi(G)\leq t+1$.
\end{theorem}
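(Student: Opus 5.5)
I would prove this by induction on $t$. The approach is to peel off balls around vertices: the interior of each ball is bipartite, and only the thin boundary layers are passed to the inductive step. Write $x=s^{1/t}$ and let $g\ge 4tx$ be the odd girth of $G$.

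\textbf{Base case $t=1$.} The hypothesis says there is no odd cycle of length less than $4s$. Every cycle has length at most $s$, so $G$ is bipartite and $\chi(G)\le 2$.

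\textbf{Inductive step, $t\ge 2$.} Run the following procedure on the current graph $G'$, initially $G$.
\begin{enumerate}
\item Pick any vertex $v$ and grow breadth-first layers $L_0=\{v\},L_1,\dots$ with balls $B_r=L_0\cup\dots\cup L_r$, all inside $G'$.
\item Choose the least $r$ such that $|L_{r+1}|\le \theta\,|B_r|$, for a threshold $\theta$ fixed below.
\item Colour $B_r$ with two colours by the parity of the layer index.
\item Put $L_{r+1}$ into a reserved set $W$, delete $B_r\cup L_{r+1}$ from $G'$, and repeat.
\end{enumerate}

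The parity colouring in step 3 is proper as long as $2r+1<g$. An edge inside a layer $L_j$ with $j\le r$, together with the two BFS paths back to $v$, closes an odd closed walk of length at most $2r+1$, and such a walk contains an odd cycle of at most that length. Different balls get the same two colours: no edge of $G$ joins two removed balls, because an edge leaving a ball $B_r$ can only go to $L_{r+1}$, and that layer was placed in $W$. Edges from a ball into $W$ cause no conflict, since $W$ will use fresh colours.

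Summing over all rounds gives $|W|\le \theta s$. The subgraph $G[W]$ inherits the odd girth $g$ of $G$. The inductive hypothesis with $t-1$ applies to $G[W]$ once
\[
g\ \ge\ 4(t-1)\,|W|^{1/(t-1)}.
\]
With $\theta=s^{-1/t}$ we get $|W|\le s^{(t-1)/t}$, so the right-hand side is at most $4(t-1)x<4tx\le g$. The inductive hypothesis then gives $\chi(G[W])\le t$. That is one colour too many: the total would be $2+t$, not $t+1$. So the induction has to be rebalanced.

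\textbf{Rebalancing.} Colour the even layers of every ball with a single colour $c_0$. Merge the odd layers of the balls into the colour classes of $W$, ensuring that each odd layer is independent from the class of $W$ it joins. Equivalently, strengthen the inductive statement to graphs in which one extra colour is already used on a prescribed independent set. I would first try to carry the balls' odd layers along with $W$ into the recursion, so that only one new colour is spent per level, giving $t+1$ colours in total.

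\textbf{Main obstacle: the stopping radius.} I must guarantee that the least $r$ in step 2 satisfies $2r+1<g$, for a $\theta$ that also makes the sum over all rounds of $|L_{r+1}|$ small. If $|L_{j+1}|>\theta|B_j|$ for all $j\le R$, then $|B_R|\ge(1+\theta)^R$. With $\theta=s^{-1/t}$ this exceeds $s$ only when $R\gtrsim t\,s^{1/t}\ln s$. That misses the available radius of about $2t s^{1/t}$ by a $\log s$ factor.

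To remove the logarithm, I would measure growth multiplicatively in blocks. Split the radius into $t$ stages of length about $2s^{1/t}$, and stop at the first stage where the ball fails to multiply by $s^{1/t}$. Then $t$ successful stages would already give $|B|>s$, which is impossible. At the failing stage, pigeonhole over the roughly $2s^{1/t}$ layers in that block to find a single layer of size at most about $s^{-1/t}$ times the ball it bounds. This is the step where the exact constant $4t$ in the statement must be used, and where I expect most of the work.
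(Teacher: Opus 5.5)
The paper gives no proof of this statement. It is quoted from Kierstead, Szemer\'edi and Trotter and used as a black box in the proofs of Theorems~\ref{thm-break the border} and~\ref{thm-Kr-free graph with large minimum degree has bounded chi}. Your proposal therefore has to stand on its own, and it does not: both points you leave open are genuine gaps, and the second defeats the approach.

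The colour count is the milder gap. The rebalancing you sketch is not an argument. Carrying the odd layers into the recursion gives an instance of order comparable to $s$, to which the case $t-1$ no longer applies. Odd layers also cannot be merged into a single colour class of $W$: the outer layer $L_r$ (when $r$ is odd), and odd layers at any depth of later balls, may have neighbours in $W$ of every colour. This part can be repaired. In each ball, give colour $t+1$ to the parity class containing $L_r$ and colour $1$ to the other class. Then move into the recursive set every vertex of the other class that has a neighbour in an earlier boundary layer. Such a vertex lies in layer $L_{r+2}$ of that earlier round, so it suffices to stop where $L_{r+1}\cup L_{r+2}$ is thin. The unmoved colour-$1$ vertices then have all their neighbours in colour $t+1$.

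The real gap is the stopping radius, and your block argument is miscalculated. Suppose a block of about $2s^{1/t}$ layers fails to multiply the ball $B_\rho$ by $s^{1/t}$. Then these layers have total size less than $(s^{1/t}-1)|B_\rho|$, so pigeonhole gives a layer of size below $\tfrac12|B_\rho|$, not below $s^{-1/t}|B_\rho|$. The factor $s^{-1/t}$ holds only relative to the outer ball of the block, which is not the set you delete.

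No choice of centre or radius can fix this. Let $t\geq2$ and $c\geq1$, fix a large $x$, and put $s=x^t$. Take a cubic graph whose girth $\gamma\geq 9ct$ is odd. Subdivide each edge into a path of odd length $\ell\leq x/(2c)$ with $\gamma\ell\geq 4tx$, and add isolated vertices to reach $s$ vertices; this is possible for large $x$, since the subdivided graph has $O(x)$ vertices. The odd girth is then $\gamma\ell\geq 4ts^{1/t}$. Now take any vertex of the subdivided part and any $r$ with $2r+1<\gamma\ell$. The breadth-first tree up to layer $r+1$ has all its leaves in $L_{r+1}$. Counting its branch vertices and its chains of subdivision vertices (each shorter than $\ell$) gives $|B_r|<2\ell|L_{r+1}|$, that is, $|L_{r+1}|>c\,s^{-1/t}|B_r|$.

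So carving balls whose boundary is $O(s^{-1/t})$ times their size is impossible within the radius the hypothesis allows. Your rule can require radius of order $s^{1/t}\log s$. That amounts to an odd-girth hypothesis larger by a factor of order $\log s^{1/t}$, which is exactly the loss you observed. Reaching $4ts^{1/t}$ requires a decomposition that does not pay for a thin boundary around every carved ball, and the proposal does not contain that idea.
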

\begin{proof}[Proof of Theorem \ref{thm-break the border}]
Denote
\[
q=\left\lceil\frac{(1+\varepsilon)(1-\delta)}{\delta}\right\rceil.
\]
Then $q$ is a positive integer and $q>(1-\delta)/\delta$, so
\[
\gamma:=\delta-\frac{1-\delta}{q}>0.
\]
Set
\[
a_n=\left\lceil n^{1-\delta}\right\rceil,
\qquad
P_n=\left\lceil\frac{1}{3}n-10a_n\right\rceil.
\]
The function $a_n$ is positive, integer-valued, and $o(n)$. By Proposition~\ref{prop-property of kneser graph}(\ref{property-kneser graph-odd girth large}), the odd girth $g_n$ of $KG(P_n,21a_n)$ satisfies
\[
g_n\geq\frac{2P_n}{21a_n}+1=\Omega(n^\delta).
\]
Moreover, $g_n/a_n^{1/q}$ tends to infinity at least as a positive constant times $n^\gamma$. Hence $g_n\geq4q a_n^{1/q}$ for all sufficiently large $n$. Let $F$ be any subgraph of $KG(P_n,21a_n)$ on at most $a_n$ vertices and denote $h=|V(F)|$. The odd girth of $F$ is at least $g_n\geq4q h^{1/q}$, so Theorem~\ref{thm-odd girth, chi, vertex number}, applied with $(s,t)=(h,q)$, shows that $F$ is $(q+1)$-colorable. Finally, $\delta(G)\geq n/3-n^{1-\delta}\geq n/3-a_n$, and Theorem~\ref{thm-sufficiency}, applied with $f(n)=a_n$, gives $\chi(G)\leq 10^{391}+(q+1)=10^{391}+1+q$, as required.
\end{proof}


\section{Extension to $K_r$-free graphs}\label{section-connect Kr-free and Kr-1-free graphs}
In this section, we extend the preceding results to $K_r$-free graphs by strengthening results of Brandt \cite{2001-brandt} and Goddard and Lyle \cite{2011-lyle}.
We use the following theorem of Goddard and Lyle \cite{2011-lyle}.
\begin{theorem}[\cite{2011-lyle}]\label{thm-lyle's thm}
Let $r\geq4$ and $k\in\{2,\dots,r-2\}$, and set $\delta_{r,k}=(k(r-3)+1)/(k(r-2)+1)$. Let $G$ be a maximal $K_r$-free graph with $\delta(G)>\delta_{r,k}n$, and let $I$ be a maximum independent set in $G$. Then $G[V(G)\setminus N(u)]$ is $K_k$-free for every $u\in I$.
\end{theorem}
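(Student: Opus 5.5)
The plan is a weighted degree-counting argument of Andrásfai--Erdős--Sós type, driven by maximality. Suppose, for contradiction, that some $u\in I$ has a $k$-clique $Q=\{q_1,\dots,q_k\}$ in $G[V(G)\setminus N(u)]$. First, if $u\in Q$, I will replace $u$ by another vertex of $I$, or shrink the configuration, so that I may assume $u\notin Q$; this is only bookkeeping. Then each $q_i$ is a non-neighbor of $u$. Maximality of $G$ says that adding $uq_i$ creates a $K_r$, so $N(u)\cap N(q_i)$ contains an $(r-2)$-clique $S_i$. Both $S_i\cup\{u\}$ and $S_i\cup\{q_i\}$ are then $(r-1)$-cliques. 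The target threshold $\delta_{r,k}=\frac{k(r-3)+1}{k(r-2)+1}$ suggests the multiset $\mathcal M=\{u\}\cup S_1\cup\dots\cup S_k$, which has total weight $k(r-2)+1$. The goal is to show that every vertex $w$ is adjacent to at most $k(r-3)+1$ elements of $\mathcal M$. Summing degrees over $\mathcal M$ would then give $(k(r-2)+1)\delta(G)\le (k(r-3)+1)n$, contradicting $\delta(G)>\delta_{r,k}n$.

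For $w\in N(u)$ the bound is immediate. Since $S_i\cup\{u\}$ is an $(r-1)$-clique and $G$ is $K_r$-free, $w$ misses at least one vertex of each $S_i$. Hence $w$ has at most $k(r-3)+1$ neighbors in $\mathcal M$, counting $u$ once.

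The difficulty is vertices $w\notin N(u)$, which may be complete to several of the $S_i$. For such $w$, being complete to $S_i$ forces $w\not\sim q_i$, since otherwise $S_i\cup\{q_i,w\}$ would be a $K_r$. I will exploit this in two ways.
\begin{enumerate}
\item Enlarge the weighting by adding the vertices of $Q$, and possibly further vertices of $I$, with suitable weights. A vertex that is complete to many $S_i$ is then penalized by missing the corresponding $q_i$.
\item Use that $I$ is a \emph{maximum} independent set. If $w\notin N(u)$ were complete to $S_i$ and also missed $q_i$, I will try to swap $w$ or $q_i$ into $I$ to produce a larger independent set, or to bound the sizes $|I|\le n-\delta(G)$ and $|V(G)\setminus N(u)|\le n-\delta(G)$ used in the final count.
\end{enumerate}
The aim is a weighting in which vertices outside $N(u)$ lose exactly as much as those inside $N(u)$, so that the ratio stays $\frac{k(r-3)+1}{k(r-2)+1}$.

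The main obstacle is this second case, namely choosing the right weights on $Q$ and on $I$ so that the non-neighbors of $u$ are controlled. My first attempt will be to induct on $k$. The case $k=2$ is Brandt's setting, where the non-neighborhood of $u$ is independent. In the inductive step I will pass to $G[V(G)\setminus N(u)]$ and use that its vertices have large common neighborhoods with $u$. I will also note that the thresholds $\delta_{r,k}$ increase with $k$, which lets the smaller-$k$ conclusions be applied inside the argument.
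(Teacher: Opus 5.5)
The paper does not prove this statement; it quotes it from Goddard and Lyle. Judged on its own, your proposal has a genuine gap, and it sits exactly where the theorem's content lies.

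\textbf{The target bound is false, not just hard.} The bound you aim for, that every vertex has at most $k(r-3)+1$ neighbours in $\mathcal M$, already fails for $w=u$. Indeed, $u$ is adjacent to every vertex of every $S_i\subseteq N(u)$, so it has $k(r-2)>k(r-3)+1$ neighbours in $\mathcal M$. The same holds for every vertex of $\bigcap_i N(S_i)$. (Also, $u\in Q$ cannot occur, since $u$ is isolated in $G[V(G)\setminus N(u)]$.)

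Each $N(S_i)$ is an independent set inside $V(G)\setminus N(u)$ containing $u$ and $q_i$. So what your multiset actually yields is
\[
(k(r-2)+1)\delta(G)\le\sum_{x\in\mathcal M}d(x)\le (k(r-3)+1)n+\sum_{i=1}^{k}|N(S_i)|-|V(G)\setminus N(u)|.
\]
Nothing in the proposal controls the error term: the sets $N(S_i)$ all contain $u$, and each may have order up to $|I|$.

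Your two remedies are not specified. The obvious one, putting weight on $Q$, does not help. A vertex $w\in N(u)$ may be adjacent to all of $Q$, since $Q\cup\{w\}$ is only a $K_{k+1}$, and to $r-3$ vertices of each $S_i$, while $u$'s count does not change. Using only these local $K_r$-freeness constraints, the best weighting of $u$, $Q$ and the $S_i$ just reproduces the Tur\'an ratio $\frac{r-2}{r-1}$. In particular, the maximality of $I$ never actually enters your argument.

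\textbf{The induction rests on the wrong monotonicity.} Since $\delta_{r,k}=1-\frac{1}{(r-2)+1/k}$, the thresholds \emph{decrease} in $k$; for example $\delta_{5,2}=\frac57>\frac{7}{10}=\delta_{5,3}$. The paper uses exactly this fact to apply the case $k=3$ under the $k=2$ threshold. So the hypothesis for $k$ does not give the hypothesis for any smaller $k$. If it did, the cases $k\ge3$ would follow trivially from $k=2$. Moreover, your base case $k=2$ is only attributed to Brandt, not proved.

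\textbf{The missing idea} is a mechanism in which $|I|$ appears as an upper bound for an independent set. In the proof of Theorem~\ref{thm-decomposition of Kr-free graph with large minimum degree}, the paper does this essentially for $k=2$:
\begin{enumerate}
\item It takes an edge $s_1s_2$ of $G[V(G)\setminus N(u)]$ with $s_1\in I$ and extends it to a copy $S$ of $K_{r-2}$.
\item It bounds the independent set $N(u)\cap\bigcap_{s\in S}N(s)$ from above by $|I|$.
\item It bounds the same set from below by inclusion--exclusion inside $N(u)$. This uses $|N(s_1)\setminus N(u)|\le n-d(u)-|I|$, and, via maximality of $G$, $|N(s_2)\setminus N(u)|\le n-d(u)-((r-2)\delta-(r-3)n)$.
\end{enumerate}
Running the same estimate with a $k$-clique of $G[V(G)\setminus N(u)]$ through a vertex of $I$, in place of the edge, gives exactly $(k(r-2)+1)\delta\le (k(r-3)+1)n$. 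Your proposal needs an argument of this kind, not a fixed weighting. You would still have to justify that such a clique can be chosen, which for $k=2$ follows because every vertex outside $I$ has a neighbour in $I$.
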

\begin{proof}[Proof of Theorem \ref{thm-decomposition of Kr-free graph with large minimum degree}]
Let $I$ be a maximum independent set of $G$, fix $u\in I$, and write $f=f(n)$ and $\delta=\delta(G)$. Since
\[
\frac{2r-5}{2r-3}-\frac{3r-8}{3r-5}
=\frac{1}{(2r-3)(3r-5)}>0,
\]
Theorem~\ref{thm-lyle's thm} with $k=3$ and $r\geq5$ shows, for all sufficiently large $n$, that $G[V(G)\setminus N(u)]$ is triangle-free. 
If $V(G)\setminus N(u)$ is independent, the $K_r$-free maximality argument as shows that it is complete to $N(u)$; since $G[N(u)]$ is $K_{r-1}$-free, the first outcome again holds.

Suppose that $G[V(G)\setminus N(u)]$ contains an edge $s_1s_2$. Extend this edge to a copy $S=\{s_1,\dots,s_{r-2}\}$ of $K_{r-2}$. This is possible because $\delta(G)\geq \frac{2r-5}{2r-3}n-f>\frac{r-3}{r-2}n$, so the vertices can be chosen greedily.
Set $I'=\bigcap_{s_i\in S}N(s_i)\cap N(u)$. When $r\geq5$, $G[V(G)\setminus N(u)]$ is $K_3$-free and we have $\bigcap_{s_i\in S}N(s_i)\subseteq N(u)$. Moreover, $I'$ is independent because it is the common neighborhood of a copy of $K_{r-2}$ in the $K_r$-free graph $G$. The maximality of $I$ gives $|I'|\leq|I|$. Since $|N(s_i)\cap N(u)|+|N(s_i)\setminus N(u)|=d(s_i)$ and $|N(u)|=d(u)$, we obtain
\begin{equation}\label{ineq-lower bound of I'}
|I'|\geq (r-2)\delta(G)-\sum_{s_i\in S}|N(s_i)\setminus N(u)|-(r-3)d(u).
\end{equation}
Since $I$ is maximal, every vertex outside $I$ has a neighbor in $I$. We may therefore choose the edge $s_1s_2$ with $s_1\in I$ and $s_2\in V(G)\setminus(N(u)\cup I)$.
Since $u$ is not adjacent to $s_2$ and $G$ is maximal $K_r$-free, $N(u)\cap N(s_2)$ contains a copy $T$ of $K_{r-2}$. Thus $u$ and $s_2$ are common neighbors of $T$. The common neighborhood of $T$ has order at least $(r-2)\delta-(r-3)n$ and is independent; moreover, it is contained in $V(G)\setminus N(u)$ because $N(u)$ is $K_{r-1}$-free. Hence $|N(s_2)\setminus N(u)|\leq(n-d(u))-((r-2)\delta-(r-3)n)$. Since $s_1\in I$, we also have $|N(s_1)\setminus N(u)|\leq(n-d(u))-|I|$. Finally, $|N(s_i)\setminus N(u)|\leq n-d(u)$ for $3\leq i\leq r-2$.
Substituting these estimates into~\eqref{ineq-lower bound of I'} gives
\begin{align*}
|I'|&\geq (r-2)\delta(G)-\sum_{s_i\in S}|N(s_i)\setminus N(u)|-(r-3)d(u)\\
&\geq (r-2)\delta -[(r-2)(n-d(u))-|I|-((r-2)\delta-(r-3)n)]-(r-3)d(u)\\
&\geq (r-2)\delta+\delta -(r-2)n+(r-2)\delta-(r-3)n+|I|\\
&=(2r-3)\delta -(2r-5)n+|I|.
\end{align*}
Since $\delta\geq \frac{2r-5}{2r-3}n-f$, we have $|I|-(2r-3)f\leq |I'|\leq |I|$, and all the corresponding terms that appear in the inequalities differ from the estimated bounds by at most $(2r-3)f$; we call such inequalities \textit{critical inequalities}. For example, $\delta\leq d(u)\leq \delta+(2r-3)f$ is a critical inequality for $d(u)$. Here we list all the critical inequalities in Table \ref{tab:critical ineq} for verifying easily. 

\begin{table}[]
    \centering
    \begin{tabular}{ccc}
  parameters & lower bound & upper bound \\
  $|I'|$ & $|I|-(2r-3)f$ & $|I|$\\
  $d(u),d(s_i)$ & $\delta$ & $\delta+(2r-3)f$ \\
  $|N(s_2)\setminus N(u)|$ & \small{$(n-d(u))-((r-2)\delta-(r-3))-(2r-3)f$}  & \small{$(n-d(u))-((r-2)\delta-(r-3))$} \\
  $|N(s_1)\setminus N(u)|$ & $(n-d(u))-|I|-(2r-3)f$  & $(n-d(u))-|I|$\\
  $|N(s_i)\setminus N(u)|$ for $i\geq 3$ & $n-d(u)-(2r-3)f$  & $n-d(u)$ 
\end{tabular}
    \caption{The critical inequalities}
    \label{tab:critical ineq}
\end{table}

Since $T$ induces a $K_{r-2}$, the critical inequality for $|N(T)|$ shows that $T$ has between $(r-2)\delta-(r-3)n$ and $(r-2)\delta-(r-3)n+(2r-3)f$ common neighbors. Thus all but at most $(2r-3)f$ vertices in $N(u)$ are adjacent to exactly $r-3$ vertices of $T$. Consequently, $N(u)$ admits a partition $N(u)=A_1\cup\dots\cup A_{r-2}\cup R_1$ such that each $A_i$ is independent and $|R_1|\leq(2r-3)f$. Relabel the sets so that $|A_1|\geq\dots\geq|A_{r-2}|$.
\begin{claim}
For each $i\in [r-2]$, $|A_i|\geq \frac{n}{2r-3}-(3r-5)f$.
\end{claim}
\begin{proof}
As $\delta\geq \frac{2r-5}{2r-3}n-f$, we have $|A_i|\leq \frac{2}{2r-3}n+f$. This implies that
\begin{align*}
|A_i|&=d(u)-|R_1|-|A_1|-\dots-|A_{i-1}|-|A_{i+1}|-\dots-|A_{r-2}|\\
&\geq \frac{2r-5}{2r-3}n-f -(2r-3)f-(r-3)(\frac{2}{2r-3}n+f)\\
&=\frac{n}{2r-3}-(3r-5)f.
\end{align*}
\end{proof}
\begin{claim}\label{claim-lower bound on I}
$|I|\geq (2-\frac{1}{r-2})\frac{n}{2r-3}-\frac{2(r-1)}{r-2}f$.
\end{claim}
\begin{proof}
By averaging,
\[
|A_1|\geq \frac{1}{r-2}(d(u)-|R_1|)\geq \frac{1}{r-2}(\frac{2r-5}{2r-3}n-f-(2r-3)f)=(2-\frac{1}{r-2})\frac{n}{2r-3}-\frac{2(r-1)}{r-2}f.
\]
The maximality of $I$ now gives the desired bound.
\end{proof}
Let $R_2=V\setminus (N(u)\cup I)$.
Note that the properties of $G$ established so far can also be obtained by considering a different choice of the tuple of vertices and set $(u,s_1,\dots,s_{r-2},T)$. We call these properties \textit{extremal properties}.
\begin{claim}
$|R_2|\leq (4r+20)f$.
\end{claim}
\begin{proof}
Since $|N(s_2)\setminus N(u)|\leq (n-d(u))-((r-2)\delta-(r-3)n)$, we have
\[
|N(s_2)\cap N(u)|\geq \delta-(n-d(u))+((r-2)\delta-(r-3)n)\geq r\delta-(r-2)n=\frac{2(r-3)}{2r-3}n-rf.
\]
Any copy of $K_{r-2}$ in $N(s_2)\cap N(u)$, together with $u$ and $S$, satisfies the extremal properties. Recall that, for $i\in[r-2]$, the set $A_i$ consists of the common neighbors of $r-3$ vertices of $T$. Let $T=\{t_1,\dots,t_{r-2}\}$ with $t_i\in A_i$. For every $w_i\in A_i\cap N(s_2)$, the set $(T\setminus\{t_i\})\cup\{w_i\}$ also induces a $K_{r-2}$ in $N(u)\cap N(s_2)$, so the preceding analysis applies again.
This implies that any vertex in $N(s_2)\cap A_i$ is adjacent to all but at most $(2r-3)f$ vertices in $N(u)\setminus A_i$.

Then we consider the common neighbors of $s_1$ and $s_2$ in $N(u)$. Clearly $N(s_1)\cap N(s_2)\cap N(u)$ is $K_{r-2}$-free, and
\begin{align}
\nonumber|N(s_1)\cap N(s_2)\cap N(u)|&\geq d(s_1)+d(s_2)-d(u)-|N(s_1)\setminus N(u)|-|N(s_2)\setminus N(u)|\\
&\nonumber \geq \delta-(2r-3)f-(n-d(u))+|I|-(n-d(u))+((r-2)\delta-(r-3)n)\\
&\nonumber\geq (r+1)\delta-(r-1)n-(2r-3)f+|I|\\
&=\frac{2(r-4)}{2r-3}n-(3r-2)f+|I|. \label{ineq-lower bound on the common neighbors in N(u)}
\end{align}
If $|N(s_1)\cap N(s_2)\cap N(u)|>|A_1|+\dots+|A_{r-3}|+|R_1|+(r-3)(2r-3)f$, then we can find a $K_{r-2}$ in $N(s_1)\cap N(s_2)\cap N(u)$, because every vertex in $N(s_2)\cap(A_1\cup\dots\cup A_{r-2})$ is adjacent to all but at most $(2r-3)f$ vertices in $N(u)$. Hence $|N(s_1)\cap N(s_2)\cap N(u)|\leq |A_1|+\dots+|A_{r-3}|+|R_1|+(r-3)(2r-3)f\leq |A_1|+\dots+|A_{r-3}|+(r-2)(2r-3)f$.
At the same time, 
\[
|A_1|+\dots+|A_{r-3}|\leq (r-3)|I|.
\]
Combining these inequalities with the upper and lower bounds on $|N(s_1)\cap N(s_2)\cap N(u)|$ gives
\[
(r-4)|I|\geq \frac{2(r-4)}{2r-3}n-2(r^2-2r+2)f.
\]
When $r\geq 5$, this implies $|I|\geq \frac{2}{2r-3}n-\frac{2(r^2-2r+2)}{r-4}f$ and
\[
|R_2|= (n-d(u))-|I|\leq n-\frac{2r-5}{2r-3}n+f-(\frac{2}{2r-3}n-\frac{2(r^2-2r+2)}{r-4}f)=\frac{r(2r-3)}{r-4}f\leq (4r+15)f.
\]
When $r=4$, without loss of generality we can assume $t_1\in I'$ and $t_2\in (N(s_2)\cap N(u))\setminus I'$. We show this case by proving that $E(I',I'^c)$ is dense and thus $|I'|$ must be roughly $\frac{2}{2r-3}n$ and $|R_2|$ is small.

We first show $E(I',N(u)\setminus I')$ is dense. As above we have $|N(t_2)\cap I'|\geq |I'|-(2r-3)f$. Then we can consider a new $T'=\{t_1',t_2\}$ with $t_1'\in N(t_2)\cap I'$. All the extremal properties hold for new $T'$ and we also have $|N(t_1')\cap N(u)|\geq d(u)-|I'|-(2r-3)f$. Notice that there are at least $|I'|-(2r-3)f$ such $t_1'$ in $I'$.

We next show that $E(I',R_2)$ is dense. The critical inequality for $|N(s_1)\setminus N(u)|$ gives $|N(s_1)\setminus N(u)|\geq(n-d(u))-|I|-(2r-3)f$. For $s_2'\in N(s_1)\setminus N(u)$, consider the new pair $(s_1,s_2')$ and the corresponding $T'$. The extremal properties give either $|(N(s_1)\cap N(s_2'))\cap I'|\geq |I'|-5(r-1)f$ or $|(N(s_1)\cap N(s_2'))\cap I'|\leq(2r-3)f$. Suppose, to the contrary, that
\[
(2r-3)f<|(N(s_1)\cap N(s_2'))\cap I'|< |I'|-5(r-1)f.
\]
Then $N(s_1)\cap N(s_2')$ is independent, and it follows from (\ref{ineq-lower bound on the common neighbors in N(u)}) that $|(N(s_2')\cap N(s_1))\cap (N(u)\setminus I')|> (2r-3)f$ and $|(N(s_1)\cap N(s_2'))\cap I'|>(2r-3)f$, contradicting the fact that all but at most $(2r-3)f$ vertices in $I'$ have at least $d(u)-|I'|-(2r-3)f$ neighbors in $N(u)\setminus I'$. Note that $N(s_1)\cap I'=I'$. If $|(N(s_1)\cap N(s_2'))\cap I'|=|N(s_2')\cap I'|\leq (2r-3)f$, then by the critical inequality for $|N(s_1)\cap N(s_2')|$ we have $|N(s_1)\cap N(s_2')|\geq |I|-(2r-3)f$ and $|(N(s_1)\cap N(s_2'))\setminus I'|\geq |I|-5(r-1)f$, which leads to $|N(s_1)\cap N(u)|\geq |I'|+|I|-5(r-1)f\geq 2|I|-(7r-8)f$. By Claim \ref{claim-lower bound on I}, $|I|\geq (2-\frac{1}{r-2})\frac{n}{2r-3}-\frac{2(r-1)}{r-2}f=\frac{3}{10}n-3f$. Hence we have
\[
d(s_1)-|N(s_1)\setminus N(u)|=|N(s_1)\cap N(u)|\geq \frac{3}{5}n-26f.
\]
By the critical inequality for $d(s_1)$, $d(s_1)\leq \delta+(2r-3)f=\frac{2r-5}{2r-3}n+2(r-2)f=\frac{3}{5}n+4f$. The preceding inequality therefore gives $|N(s_1)\setminus N(u)|\leq30f$. On the other hand, the critical inequality for $|N(s_1)\setminus N(u)|$ gives $|N(s_1)\setminus N(u)|\geq n-d(u)-|I|-(2r-3)f$. Hence $|I|\geq n-d(u)-35f$ and $|R_2|\leq35f$, proving the claim. We may therefore assume that $|(N(s_1)\cap N(s_2'))\cap I'|\geq |I'|-5(r-1)f$ for every $s_2'\in N(s_1)\setminus N(u)$.

Finally, we show that $E(I',I)$ is dense. For $s_1'\in I$, if $N(s_1')\subseteq N(u)$, then $|N(s_1')\cap N(u)|\geq\delta\geq d(u)-(2r-3)f$ and $|N(s_1')\cap I'|\geq|I'|-(2r-3)f$.
If $N(s_1')\setminus N(u)\ne\emptyset$, choose $s_2'\in N(s_1')\setminus N(u)$ and obtain the extremal properties with the corresponding $T'$. The remainder of the argument is as in the preceding paragraph.
If $|R_2|\leq2(2r-3)f$, the claim follows. Otherwise, the critical inequalities for $|N(s_1)\setminus N(u)|$ and $|N(s_1')\setminus N(u)|$ allow us to choose $s_2'\in N(s_1)\cap N(s_1')$ with $|N(s_2')\cap I'|\geq|I'|-5(r-1)f$. Again, either $|(N(s_1')\cap N(s_2'))\cap I'|\geq|I'|-5(r-1)f$ or $|(N(s_1')\cap N(s_2'))\cap I'|\leq(2r-3)f$. The second alternative is impossible: by~\eqref{ineq-lower bound on the common neighbors in N(u)} and Claim~\ref{claim-lower bound on I}, $|N(s_1')\cap N(s_2')|\geq |I|-9f\geq\frac{3}{10}n-12f$, while the critical inequality for $|I'|$ and Claim~\ref{claim-lower bound on I} give $|N(s_2')\cap I'|\geq|I'|-5(r-1)f\geq|I|-(7r-8)f\geq\frac{3}{10}n-23f$, and
\begin{align*}
|N(s_2')\cap N(u)|&\leq d(s_2')-|N(s_2')\setminus N(u)|\\
&\leq \delta+(2r-3)f-(n-d(u))+(r-2)\delta-(r-3)n+(2r-3)f\\
&\leq  \frac{2}{5}n+11f.
\end{align*}
Hence for any such $s_1'$ we have $|N(s_1')\cap I'|\geq|(N(s_1')\cap N(s_2'))\cap I'|\geq |I'|-5(r-1)f$.

Altogether, $E(I',I'^c)$ is dense. More precisely,
\begin{align*}
&\quad e(I',N(u)\setminus I')+e(I',R_2)+e(I',I)\\
&\geq (|I'|-5f)(d(u)-|I'|-5f)+(|R_2|-5f)(|I'|-15f)+|I|(|I'|-15f)\\
&=|I'|(n-|I'|)-5f(3n-2d(u)+|I'|)+100f^2.
\end{align*}
On the other hand, by the critical inequality for $d(t_1)$, and all but at most $5f$ vertices in $I'$ can serve as such a $t_1$ in $T$, we have
\begin{align*}
e(I',N(u)\setminus I')+e(I',I)+e(I',R_2)&=\sum_{w\in I'}d(w)\\
&\leq (|I'|-5f)(\delta+5f)+5f\cdot (n-|I'|)\\
&= |I'|(\frac{3}{5}n-f)+5f(\frac{2}{5}n+f)-25f^2.
\end{align*}
Combining the upper and lower bounds on $e(I',N(u)\setminus I')+e(I',I)+e(I',R_2)$ gives
\[
|I'|(\frac{2}{5}n-|I'|+f)\leq 5f(\frac{17}{5}n+|I'|-2d(u)+f)-125f^2\leq 5f(\frac{11}{5}n+|I'|+3f)-125f^2\leq 13fn,
\]
which implies $|I'|\geq\frac{2}{5}n-35f$ and hence $|I|\geq\frac{2}{5}n-35f$. Therefore $|R_2|\leq n-d(u)-|I|\leq36f\leq(4r+20)f$, completing the proof.
\end{proof}
We next estimate $|A_i|$ for $i\in[r-2]$. The minimum-degree condition gives $|A_i|\leq n-\delta=\frac{2}{2r-3}n+f$. Recall that $N(u)=A_1\cup\dots\cup A_{r-2}\cup R_1$ and $|A_1|\geq\dots\geq|A_{r-2}|$.
\begin{claim}
For $i\in [r-3]$, $\frac{2}{2r-3}n-(2r^2+3r-10)f\leq |A_i|\leq \frac{2}{2r-3}n+f$; $\frac{n}{2r-3}-(3r-5)f\leq |A_{r-2}|\leq \frac{n}{2r-3}+2(r^2-2) f$.
\end{claim}
\begin{proof}
For $s_1 s_2\in E(G\setminus N(u))$ with $s_1\in I$ and $s_2\in V(G)\setminus (I\cup N(u))$, $N(s_1)\cap N(s_2)$ is $K_{r-2}$-free. Recall that $T=\{t_1,\dots,t_{r-2}\}$, $A_i=\{v\in N(u):N(v)\cap T=T\setminus\{t_i\}\}$, $t_i\in A_i$, and every vertex in $N(s_2)\cap A_i$ is adjacent to all the vertices in $N(u)\setminus A_i$ except at most $(2r-3)f$ vertices. Recall that the upper bound on $|N(s_2)\setminus N(u)|$ is
\[
|N(s_2)\setminus N(u)|\leq (n-d(u))-(r-2)\delta+(r-3)n\leq \frac{n}{2r-3}+(r-1)f,
\]
and
\[
|N(s_2)\cap N(u)|\geq \delta-|N(s_2)\setminus N(u)|\geq \frac{2(r-3)}{2r-3}n-rf.
\]
Since $|N(s_1)\cap N(u)|\geq \delta-|R_2|\geq d(u)-(2r-3)f-(4r+20)f=d(u)-(6r+17)f$, we derive $|N(s_1)\cap N(s_2)\cap N(u)|\geq \frac{2(r-3)}{2r-3}n-(7r+17)f$.
If $|A_{r-2}|> \frac{n}{2r-3}+2(r^2-2) f$, then we have for each $i\in [r-2]$, $|A_i|> \frac{n}{2r-3}+2(r^2-2)f$, and
\begin{align*}
|N(s_1)\cap N(s_2)\cap A_i|&\geq |N(s_1)\cap N(s_2)\cap N(u)|-(d(u)-|A_i|)\\
&\geq |N(s_1)\cap N(s_2)\cap N(u)|-(\delta+(2r-3)f)+|A_i|\\
&>(r-3)(2r-3)f.
\end{align*}
We could then greedily find a $K_{r-2}$ in $N(s_1)\cap N(s_2)\cap(A_1\cup\dots\cup A_{r-2})$, a contradiction. Hence $|A_{r-2}|\leq \frac{n}{2r-3}+2(r^2-2)f$, which implies
\begin{align*}
|A_{1}|\geq\dots\geq |A_{r-3}|&\geq d(u)-|A_1|-\dots-|A_{r-4}|-|A_{r-2}|-|R_{1}|\\
&\geq (\frac{2r-5}{2r-3}n-f)-(r-4)(\frac{2}{2r-3}n+f)-(\frac{n}{2r-3}+2(r^2-2) f)-(2r-3)f\\
&=\frac{2}{2r-3}n-(2r^2+3r-10)f.
\end{align*}
\end{proof}
It remains to assign the vertices in $R_1\cup R_2$ to the desired parts.
By the minimum-degree condition, for each $i\in[r-3]$, every vertex in $A_i$ is adjacent to all but at most $O(f)$ vertices outside $A_i$. Consequently, every $w\in R_1\cup R_2$ satisfies either $|N(w)\cap I|=O(f)$ or $|N(w)\cap A_{i_w}|=O(f)$ for some $i_w$; otherwise, $N(w)$ would contain a copy of $K_{r-1}$.
For each $w\in R_1\cup R_2$, choose one qualifying index from the preceding alternatives, breaking ties arbitrarily: use index $0$ when $|N(w)\cap I|=O(f)$ and index $i\in[r-2]$ when $|N(w)\cap A_i|=O(f)$. Let $M_i$ be the set of vertices assigned index $i$. Then $M_0,M_1,\dots,M_{r-2}$ partition $R_1\cup R_2$. One readily verifies that $A_i\cup M_i$ is independent for every $i\in[r-3]$, that $E(M_{r-2},A_{r-2})=\emptyset$, and that $I\cup M_0$ is independent.
Let $I_i=A_i\cup M_i$ for $i\in [r-3]$, let $I_{r-2}=I\cup M_0$, and set $I_0=A_{r-2}$ and $A=M_{r-2}$.
It remains to show that $M_{r-2}$ is homomorphic to the Kneser graph $KG(\frac{n}{2r-3}-O(f),O(f))$.
For an edge $vw\in E(M_{r-2})$, $|N(v)\cap A_{r-2}|=O(f)$, $|N(w)\cap A_{r-2}|=O(f)$. 
Since $N(v)\cap N(w)$ is $K_{r-2}$-free, we have $|N(v)\cap N(w)|\leq \frac{2(r-3)}{2r-3}n+O(f)$.
As $d(v),d(w)\geq \delta=\frac{2r-5}{2r-3}n-f$,
\[
|N(v)\cap N(w)|\geq 2\delta-(n-|A_{r-2}|)-O(f)\geq \frac{2(r-3)}{2r-3}n-O(f).
\]
Hence $|N(v)\cap N(w)|=\frac{2(r-3)}{2r-3}n+O(f)$, and there exists $i_{v,w}$ such that
\[
|N(v)\cap N(w)\cap (\cup_{j\in[r-2],j\neq i_{v,w}}I_{j})|\geq \frac{2(r-3)}{2r-3}n-O(f).
\]
Furthermore, $|N(v)\cap I_{i_{v,w}}|\geq \frac{n}{2r-3}-O(f)$, $|N(w)\cap I_{i_{v,w}}|\geq \frac{n}{2r-3}-O(f)$, and $N(v)\cap N(w)\cap I_{i_{v,w}}=\emptyset$. Indeed, otherwise we could greedily construct a $K_r$, starting with the triangle formed by $v,w$ and a common neighbor in $I_{i_{v,w}}$. We may consider each component of $M_{r-2}$ separately; within a component, $i_{v,w}$ is constant along its edges, so assume that $i_{v,w}=1$. For each $w\in M_{r-2}$, $|N(w)\cap I_1|\geq \frac{n}{2r-3}-O(f)$, while $N(v)\cap N(w)\cap I_1=\emptyset$ for every edge $vw\in E(M_{r-2})$. Thus $M_{r-2}$ is homomorphic to $KG(\frac{n}{2r-3}-O(f),O(f))$, completing the proof.
\end{proof}
We can now extend the results of the preceding section to $K_r$-free graphs for $r\geq3$.

\begin{theorem}\label{thm-Kr-free graph with large minimum degree has bounded chi}
Let $r\geq3$ be an integer, let $0<\delta<1$, and let $\varepsilon>0$. Denote
\[
q=\left\lceil\frac{(1+\varepsilon)(1-\delta)}{\delta}\right\rceil.
\]
For all sufficiently large $n$, every $n$-vertex maximal $K_r$-free graph with minimum degree at least $\frac{2r-5}{2r-3}n-n^{1-\delta}$ has chromatic number at most $10^{391}+(r-2)+q$.
\end{theorem}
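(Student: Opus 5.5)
We argue by induction on $r$. The base case $r=3$ is Theorem~\ref{thm-break the border}, which gives $\chi\le 10^{391}+1+q$, matching the claimed bound $10^{391}+(r-2)+q$ when $r=3$.

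For $r\ge4$, let $G$ be an $n$-vertex maximal $K_r$-free graph with $\delta(G)\geq \frac{2r-5}{2r-3}n-n^{1-\delta}$. We apply Theorem~\ref{thm-decomposition of Kr-free graph with large minimum degree} with $f(n)=\lceil n^{1-\delta}\rceil=o(n)$, which yields two outcomes.

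\emph{First outcome.} $G$ is the join of a nonempty independent set $I$ and a $K_{r-1}$-free graph $T$. As in the continuation of the proof of Theorem~\ref{thm-main thm}, we have $|T|\ge\delta(G)$. Every $w\in T$ satisfies
\[
d_T(w)\ge \delta(G)-(n-|T|).
\]
Writing $m=|T|$ and using $n-m\le \frac{2}{2r-3}n+f$, one checks that $d_T(w)\ge \frac{2r-7}{2r-5}m-O(f)$. The $O(f)$ term can be bounded by $m^{1-\delta'}$ for any fixed $\delta'<\delta$, since $m=\Theta(n)$. Shrinking $\delta$ slightly still gives the same $q$ once $\varepsilon$ is adjusted, because $q$ is a ceiling and we have slack in $\varepsilon$. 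After adding edges we may assume $G[T]$ is maximal $K_{r-1}$-free; its minimum degree does not decrease. The induction hypothesis then gives $\chi(G[T])\le 10^{391}+(r-3)+q$, and one extra color for $I$ gives the bound.

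The delicate points in this case are the following.
\begin{enumerate}
\item The deficiency in $T$ is $C\cdot n^{1-\delta}$ rather than $m^{1-\delta}$. To handle this, I would state the induction with deficiency $Cn^{1-\delta}$ for an arbitrary constant $C$. This costs nothing, because $Cn^{1-\delta}\le n^{1-\delta''}$ for any $\delta''<\delta$ and $n$ large.
\item Choose $\delta''<\delta$ and $\varepsilon''<\varepsilon$ so that $\frac{(1+\varepsilon'')(1-\delta'')}{\delta''}\le\frac{(1+\varepsilon)(1-\delta)}{\delta}$. This is possible by continuity: shrink $\varepsilon$ first, then $\delta$.
\end{enumerate}

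\emph{Second outcome.} $V(G)=I_0\cup I_1\cup\dots\cup I_{r-2}\cup A$, where $|A|=O(f)$ and $G[A]$ admits a homomorphism to $KG(P,Cf)$ with $P=\frac{n}{2r-3}-O(f)$. We use one color for each of $I_0,\dots,I_{r-2}$; these are independent by construction. That gives $r-1$ colors. It remains to color $G[A]$, which is an $O(f)$-vertex graph mapping homomorphically to a Kneser graph with ratio $P/(Cf)=\Omega(n^{\delta})$. Its image has at most $|A|$ vertices. By Proposition~\ref{prop-property of kneser graph}(\ref{property-kneser graph-odd girth large}), the odd girth is $\Omega(n^\delta)$, and this exceeds $4q|A|^{1/q}=O(n^{(1-\delta)/q})$ since $q>(1-\delta)/\delta$. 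So Theorem~\ref{thm-odd girth, chi, vertex number} gives a $(q+1)$-coloring, exactly as in the proof of Theorem~\ref{thm-break the border}. The total is $(r-1)+q+1=r+q\le 10^{391}+(r-2)+q$.

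I expect the main obstacle to be the bookkeeping in the first outcome: verifying that the degree condition inside $T$ is inherited with only an $O(n^{1-\delta})$ loss, and making the parameter adjustment rigorous so that $q$ does not increase.
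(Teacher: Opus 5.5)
Your proposal is correct and follows the paper's proof essentially step for step. The structure is induction on $r$ with Theorem~\ref{thm-break the border} as base, then Theorem~\ref{thm-decomposition of Kr-free graph with large minimum degree} with $f=\lceil n^{1-\delta}\rceil$, with a join case and a partition case. In the join case, the degree loss of $\frac{2r-3}{2r-5}f$ is absorbed into $|T|^{1-\delta'}$ using slightly perturbed parameters $(\delta',\varepsilon')$, after which the induction hypothesis applies. In the partition case, $r-1$ colours go to the independent parts, and $A$ receives $q+1$ colours from the Kneser odd-girth bound combined with the Kierstead--Szemer\'edi--Trotter theorem.

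There are only two cosmetic differences. You perturb $\varepsilon$ before $\delta$, whereas the paper fixes $\delta'$ with $q>(1-\delta')/\delta'$ first; both orders work. You also apply the odd-girth theorem to the homomorphic image inside the Kneser graph rather than to $G[A]$ directly, which is equally valid.
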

\begin{proof}
We proceed by induction on $r$. The case $r=3$ is Theorem~\ref{thm-break the border}. Denote $a_n=\lceil n^{1-\delta}\rceil$. Since the stated degree condition implies $\delta(G)\geq\frac{2r-5}{2r-3}n-a_n$, Theorem~\ref{thm-decomposition of Kr-free graph with large minimum degree}, applied with the positive integer-valued function $a_n$, shows that either\\ CASE 1. $G$ is the join of an independent set $I$ and a $K_{r-1}$-free graph $G_{r-1}$, or\\
CASE 2. $V(G)$ admits a partition $V(G)=I_0\cup I_1\cup\dots\cup I_{r-2}\cup A$ such that $\bigl||I_0|-\frac{n}{2r-3}\bigr|=O(a_n)$; for every $i\in[r-2]$, $\bigl||I_i|-\frac{2n}{2r-3}\bigr|=O(a_n)$; $|A|=O(a_n)$; and $G[A]$ admits a homomorphism to $KG(\frac{n}{2r-3}-O(a_n),O(a_n))$.

For the first case (CASE 1), we have
\begin{align*}
\delta(G_{r-1})&\geq \delta(G)-|I|\\
&=\frac{2r-7}{2r-5}(n-|I|)-\frac{2r-7}{2r-5}n-\frac{2}{2r-5}|I|+\delta(G)\\
&\geq \frac{2r-7}{2r-5}(n-|I|)-\frac{2r-7}{2r-5}n-\frac{2}{2r-5}(n-\delta(G))+\delta(G)\\
&\geq \frac{2r-7}{2r-5}|G_{r-1}|-n+\frac{2r-3}{2r-5}(\frac{2r-5}{2r-3}n-a_n)\\
&=\frac{2r-7}{2r-5}|G_{r-1}|-\frac{2r-3}{2r-5}a_n\\
&\geq \frac{2r-7}{2r-5}|G_{r-1}|-|G_{r-1}|^{1-\delta'},
\end{align*}
where we choose $0<\delta'<\delta$ sufficiently close to $\delta$ such that $q>(1-\delta')/\delta'$, and then choose $\varepsilon'>0$ so small that
\[
\left\lceil\frac{(1+\varepsilon')(1-\delta')}{\delta'}\right\rceil\leq q.
\]
Since $I\neq\emptyset$, every vertex $x\in I$ has $d_G(x)=|V(G_{r-1})|$; hence $|V(G_{r-1})|\geq\delta(G)=\Theta(n)$. The last inequality in the display above therefore holds for all sufficiently large $n$ because $\delta'<\delta$. After adding edges within $G_{r-1}$ if necessary, the induction hypothesis with $(\delta',\varepsilon')$ gives
\[
\chi(G_{r-1})\leq 10^{391}+(r-3)+q.
\]
One additional color for $I$ proves the asserted bound in this case.

For the second case (CASE 2), there are constants $C,c>0$, depending only on $r$, such that $|A|\leq Ca_n\leq 2Cn^{1-\delta}$ and Proposition~\ref{prop-property of kneser graph}(\ref{property-kneser graph-odd girth large}) gives odd girth at least $cn^\delta$ for the target Kneser graph. A homomorphism $G[A]\to K$ maps every odd cycle of $G[A]$ to an odd closed walk in $K$, which contains an odd cycle no longer than that walk. Thus the odd girth of $G[A]$ is at least that of $K$. Since $q>(1-\delta)/\delta$, for all sufficiently large $n$ we have
\[
cn^\delta\geq4q(2Cn^{1-\delta})^{1/q}\geq4q|A|^{1/q}.
\]
Theorem~\ref{thm-odd girth, chi, vertex number}, applied to $G[A]$ with the integer parameter $q$, now shows that $G[A]$ is $(q+1)$-colorable. The $r-1$ independent parts require at most $r-1$ further colors, so $\chi(G)\leq r+q<10^{391}+(r-2)+q$.
\end{proof}

\begin{theorem}\label{thm-Kr-free case-extremal graph}
Fix an integer $r\geq3$, and let $f$ be a positive integer-valued function satisfying $f(n)=o(n)$. For all sufficiently large $n$, if $G$ is an $n$-vertex $K_r$-free graph with minimum degree at least $\frac{2r-5}{2r-3}n-f(n)$ and chromatic number at least $10^{391}+(r-3)$, then $G$ admits a partition $V(G)=I_0\cup I_1\cup\dots\cup I_{r-2}\cup A$ such that $I_0,I_1,\dots,I_{r-2}$ are independent, $\bigl||I_0|-\frac{n}{2r-3}\bigr|=O(f(n))$, $\bigl||I_i|-\frac{2n}{2r-3}\bigr|=O(f(n))$ for every $i\in[r-2]$, and $G[A]$ admits a homomorphism to $KG(\frac{n}{2r-3}-O(f(n)),O(f(n)))$. All implicit constants may depend on $r$ only.
\end{theorem}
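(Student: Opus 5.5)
Induction on $r$, with Theorem~\ref{thm-24000} as the base case and Theorem~\ref{thm-decomposition of Kr-free graph with large minimum degree} as the inductive engine, following the same two-case scheme as the proof of Theorem~\ref{thm-Kr-free graph with large minimum degree has bounded chi}.

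For $r=3$ the statement is Theorem~\ref{thm-24000}, read with the roles relabeled. The part called $I_2$ there has order $\frac n3-O(f)$ and becomes $I_0$; the part called $I_1$ has order $\frac{2n}{3}-O(f)$ and becomes $I_1$. For the upper bound on $|I_1|$, note that $I_1$ is independent and every vertex of $I_1$ has degree at least $\frac n3-f$, so its neighbors lie outside $I_1$ and $|I_1|\le n-\delta(G)\le \frac{2n}{3}+f$. The same argument gives $|I_0|\le \frac n3+O(f)$ once $|I_1|\ge\frac{2n}{3}-O(f)$ is known, since $|I_0|\le n-|I_1|$. The Kneser target $KG(\frac n3-44f,97f)$ is of the required form.

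For $r\ge4$, first add edges to make $G$ maximal $K_r$-free. This preserves the degree condition and cannot decrease the chromatic number, and a partition with independent parts for the supergraph is still one for $G$. A homomorphism $G'[A]\to K$ restricts to $G[A]$ because $G[A]$ is a subgraph. Now apply Theorem~\ref{thm-decomposition of Kr-free graph with large minimum degree}.

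\textbf{Outcome 2.} The partition is exactly the one required, except that independence of $I_0,I_i$ must be read off from the construction in that proof, where these parts are $A_{r-2}$, $A_i\cup M_i$, and $I\cup M_0$, all shown to be independent.

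\textbf{Outcome 1.} $G$ is the join of a nonempty independent set $I$ and a $K_{r-1}$-free graph $G_{r-1}$. Every vertex of $I$ has degree $|G_{r-1}|$, so $|I|\le n-\delta(G)\le \frac{2}{2r-3}n+f$. The computation in CASE 1 of Theorem~\ref{thm-Kr-free graph with large minimum degree has bounded chi} gives
\[
\delta(G_{r-1})\ge \tfrac{2r-7}{2r-5}|G_{r-1}|-\tfrac{2r-3}{2r-5}f.
\]
Moreover $\chi(G_{r-1})=\chi(G)-1\ge 10^{391}+(r-4)$, and $|G_{r-1}|\ge\delta(G)=\Theta(n)$, so the deficiency $f'(m)=\lceil\frac{2r-3}{2r-5}f(n)\rceil$ is still $o(m)$ with $m=|G_{r-1}|$. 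The induction hypothesis then partitions $V(G_{r-1})=J_0\cup J_1\cup\dots\cup J_{r-3}\cup A$, with $|J_0|=\frac{m}{2r-5}+O(f)$, $|J_i|=\frac{2m}{2r-5}+O(f)$, and $G[A]\to KG(\frac{m}{2r-5}-O(f),O(f))$.

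The main obstacle is converting $m$ back to $n$, which requires pinning $|I|$ to $\frac{2n}{2r-3}+O(f)$. The upper bound $|I|\le\frac{2n}{2r-3}+f$ is already in hand. For the lower bound, take a vertex $v\in J_1$. Its neighbors lie in $I$ and in $V(G_{r-1})\setminus J_1$, so
\[
d(v)\le n-|J_1|=n-\tfrac{2m}{2r-5}+O(f).
\]
Combining this with $d(v)\ge \frac{2r-5}{2r-3}n-f$ and $m=n-|I|$ gives
\[
\tfrac{2(n-|I|)}{2r-5}\le \tfrac{2n}{2r-3}+O(f),
\]
that is, $|I|\ge n-\frac{2r-5}{2r-3}n-O(f)=\frac{2n}{2r-3}-O(f)$. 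Hence $m=\frac{2r-5}{2r-3}n+O(f)$, and therefore $\frac{m}{2r-5}=\frac{n}{2r-3}+O(f)$ and $\frac{2m}{2r-5}=\frac{2n}{2r-3}+O(f)$.

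Set $I_0=J_0$, $I_i=J_i$ for $i\in[r-3]$, $I_{r-2}=I$, and keep $A$. Then the parts have the required orders, they are independent, and the Kneser target becomes $KG(\frac{n}{2r-3}-O(f),O(f))$. The implicit constants depend only on $r$ because the induction has only $r-3$ steps.
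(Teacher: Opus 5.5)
Your overall route is the paper's: induction on $r$ with Theorem~\ref{thm-24000} as the base case, saturation to a maximal $K_r$-free graph, and Theorem~\ref{thm-decomposition of Kr-free graph with large minimum degree}. In the join case you use the same $m$-to-$n$ conversion; your degree bound for $v\in J_1$ is the paper's observation that every independent set has order at most $n-\delta(G)$. Your base-case bookkeeping (the upper bounds on $|I_1|$ and $|I_0|$) is correct.

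The gap is the sentence in which you apply the induction hypothesis to $G_{r-1}$. That hypothesis concerns a \emph{fixed} deficiency function of the order. For each positive integer-valued $g$ with $g(m)=o(m)$, there is a threshold $m_0(g)$ beyond which the conclusion holds with error $O(g(m))$. Your $f'(m)=\lceil\frac{2r-3}{2r-5}f(n)\rceil$ is not a function of $m$. Join graphs with different $n$, and hence different $f(n)$, can have $|V(G_{r-1})|=m$ for the same $m$. If you define $f'$ graph by graph, the threshold $m_0(f')$ moves with the graph, so nothing guarantees $m\ge m_0(f')$. Observing that $f'$ is ``still $o(m)$'' therefore does not license the application. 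The naive repair is to take the maximum of $\lceil\frac{2r-3}{2r-5}f(n)\rceil$ over all $n$ compatible with $m$. That does give an honest $o(m)$ function, but then you only get errors $O(f'(m))$, which need not be $O(f(n))$ for your particular $n$ when $f$ oscillates.

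The paper closes exactly this hole with a compactness argument. Assume a sequence of counterexamples $G_j=I_j\vee H_j$ with $n_j\to\infty$. Pass to a subsequence on which $m_j=|V(H_j)|$ is strictly increasing, and set $g(m_j)=\lceil\frac{2r-3}{2r-5}f(n_j)\rceil$ and $g\equiv1$ elsewhere. This $g$ is a well-defined positive integer-valued $o(t)$ function with $g(m_j)=O(f(n_j))$. So the hypothesis for $r-1$ applies to $H_j$ for all large $j$, with constants depending only on $r$, and your conversion gives the contradiction.

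Equivalently, the same diagonal argument shows that the function form of the statement implies a uniform form: there are $\eta_0>0$ and $N$ such that the conclusion holds with error $O(a)$ for all $n\ge N$ and all deficiencies $1\le a\le\eta_0 n$. Applying this uniform form of the hypothesis for $r-1$ with $a=\lceil\frac{2r-3}{2r-5}f(n)\rceil$ makes your argument rigorous as written. The real obstacle in the join case is therefore this quantifier issue, not the $m$-to-$n$ conversion, which is a two-line computation.
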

\begin{proof}
We proceed by induction on $r$. The case $r=3$ is Theorem~\ref{thm-24000}.
By adding edges if necessary, assume that $G$ is maximal $K_r$-free.
Write $f=f(n)$. By Theorem~\ref{thm-decomposition of Kr-free graph with large minimum degree}, either $G$ is the join of an independent set $I$ and a $K_{r-1}$-free graph $G_{r-1}$, or $V(G)$ admits a partition $V(G)=I_0\cup I_1\cup \dots\cup I_{r-2}\cup A$ such that $\bigl||I_0|-\frac{n}{2r-3}\bigr|=O(f)$; for each $i\in [r-2]$, $\bigl||I_i|-\frac{2n}{2r-3}\bigr|=O(f)$; $|A|=O(f)$, and $G[A]$ admits a homomorphism to a Kneser graph $KG(\frac{n}{2r-3}-O(f),O(f))$.

The second case yields the result directly. It remains to justify that the first case also yields the conclusion with an error measured at the original order $n$. Suppose, to the contrary, that there is a sequence of maximal counterexamples $(G_j)_{j\geq1}$ in the first case, where $n_j=|V(G_j)|\to\infty$. Let
\[
G_j=I_j\vee H_j,\qquad m_j=|V(H_j)|,\qquad f_j=f(n_j),
\]
where $I_j$ is a nonempty independent set and $H_j$ is $K_{r-1}$-free. Since $\chi(G_j)\geq 10^{391}+(r-3)$, we have $\chi(H_j)\geq10^{391}+(r-4)$, and the calculation used above gives
\[
\delta(H_j)\geq\frac{2r-7}{2r-5}m_j-c_rf_j,
\qquad c_r:=\frac{2r-3}{2r-5}.
\]
Every vertex of $I_j$ has degree $m_j$, so $m_j\geq\delta(G_j)=\Theta(n_j)$. Passing to a subsequence, we may assume that $(m_j)$ is strictly increasing. Define $g(m_j)=\lceil c_rf_j\rceil$ and $g(t)=1$ at all other positive integers. Then $g$ is positive and integer-valued, $g(t)=o(t)$, and $g(m_j)=O(f_j)$. The induction hypothesis applied to $H_j$ with this error function gives, for all sufficiently large $j$, a partition
\[
V(H_j)=J_{0,j}\cup J_{1,j}\cup\dots\cup J_{r-3,j}\cup A_j
\]
such that
\[
\left||J_{0,j}|-\frac{m_j}{2r-5}\right|=O(f_j),
\qquad
\left||J_{i,j}|-\frac{2m_j}{2r-5}\right|=O(f_j)\quad(i\in[r-3]),
\]
$|A_j|=O(f_j)$, and $H_j[A_j]$ admits a homomorphism to $KG(\frac{m_j}{2r-5}-O(f_j),O(f_j))$. Here the implicit constants are uniform in $j$, because the constants in the induction hypothesis depend only on $r$.

It remains to convert these estimates to the $n_j$-scale. We have
\[
m_j\geq\delta(G_j)\geq\frac{2r-5}{2r-3}n_j-f_j.
\]
The inductive partition contains the independent set $J_{1,j}$ of order at least $2m_j/(2r-5)-O(f_j)$. Every independent set $X$ in $G_j$ satisfies
\[
|X|\leq n_j-\delta(G_j)\leq\frac{2}{2r-3}n_j+f_j.
\]
Applying this to $J_{1,j}$ yields $m_j\leq\frac{2r-5}{2r-3}n_j+O(f_j)$. Consequently,
\[
m_j=\frac{2r-5}{2r-3}n_j+O(f_j),
\qquad
|I_j|=n_j-m_j=\frac{2}{2r-3}n_j+O(f_j),
\]
and hence
\[
\frac{m_j}{2r-5}=\frac{n_j}{2r-3}+O(f_j),
\qquad
\frac{2m_j}{2r-5}=\frac{2n_j}{2r-3}+O(f_j).
\]
Setting $J_{r-2,j}=I_j$ gives the asserted partition of $G_j$, including the required Kneser parameters, for every sufficiently large $j$. This contradicts the choice of the sequence and completes the induction.
\end{proof}

\section{Concluding remarks}\label{section-concluding remarks}

Theorem~\ref{thm-break the border} shows that, for $0<\delta<1$, every sufficiently large triangle-free graph with minimum degree at least $\frac{n}{3}-n^{1-\delta}$ has chromatic number at most
\[
10^{391}+1+\left\lceil\frac{(1+\varepsilon)(1-\delta)}{\delta}\right\rceil.
\]
Consequently, for every $t\geq10^{391}+3$ and $\varepsilon>0$, the function $s_t(n)$ in Problem~\ref{problem2} satisfies
\[
s_t(n)\geq n^{\,1-\frac{1+\varepsilon}{t-10^{391}-1+\varepsilon}}
\]
for all sufficiently large $n$. We have not optimized the constant factors. Our bound follows from the relation among odd girth, order, and chromatic number due to Kierstead, Szemer\'edi, and Trotter. It is natural to ask whether a matching upper bound for $s_t(n)$, up to constant factors, can be obtained by constructing an $n^{1-\delta}$-vertex subgraph of $KG(n,n^{1-\delta})$ with large chromatic number for which Theorem~\ref{thm-odd girth, chi, vertex number} is nearly tight. Alternatively, one may seek a stronger upper bound on the chromatic number of such Kneser subgraphs.

Although maximal triangle-free graphs with minimum degree greater than $\frac{n}{3}$ are known to be blow-ups of Vega graphs, the case of minimum degree exactly $\frac{n}{3}$ remains intriguing; this problem was also mentioned in \cite{2024-luczak}.
\begin{problem}
Characterize all the $n$-vertex maximal triangle-free graphs with minimum degree exactly $\frac{n}{3}$.
\end{problem}
We prove that all such graphs are $4$-colorable. As observed in \cite{2024-luczak}, there are many triangle-free regular graphs of degree $\frac{n}{3}$ that are not blow-ups of Vega graphs. This suggests the following question: must every exceptional graph be regular?
\begin{problem}[\cite{2024-luczak}]
Let $G$ be an $n$-vertex triangle-free graph with $\delta(G)\geq\frac{n}{3}$. If $G$ contains a vertex of degree greater than $\frac{n}{3}$, must $G$ be a blow-up of a Vega graph?
\end{problem}
The proof of Theorem~\ref{thm-main thm} may provide a starting point for a more detailed structural analysis.

\section*{Acknowledgement}
Jiaao Li and Xinyuan Li are partially supported by National Key Research and Development Program of China (No. 2022YFA1006400), National Natural Science Foundation of China (No. 12571371), Natural Science Foundation of Tianjin (No. 24JCJQJC00130), and the Fundamental Research Funds for the Central Universities, Nankai University. During the preparation of this paper, ChatGPT was used for proofreading and language editing to improve clarity and fluency. The mathematical ideas, proofs, and all technical mathematical content are the author's own work.



\end{document}